\documentclass[11pt]{amsart}

\usepackage{amssymb, amsfonts, amsmath, amsthm, mathtools}

\usepackage[T1]{fontenc}
\usepackage[sc]{mathpazo}
\usepackage[mathscr]{eucal}
\usepackage[scaled=1]{inconsolata}   

\usepackage{titlesec}
\usepackage{titletoc}      
\titleformat{\section}
  {\normalfont\Large\bfseries}{\S\thesection}{1em}{}
\titleformat{\subsection}
  {\normalfont\large\bfseries}{\S\thesubsection}{1em}{}

\usepackage{hyperref}      

\renewcommand{\emph}{\textbf}

\newcommand{\R}{\mathbb{R}}
\newcommand{\FF}{\mathcal F}
\newcommand{\KK}{\mathcal{K}}

\DeclareMathOperator{\tr}{tr}
\DeclareMathOperator{\supp}{supp}
\DeclareMathOperator{\Rea}{Re}
\DeclareMathOperator{\interior}{int}
\newcommand{\trm}{\operatorname{tr}_m}
\newcommand{\id}{\mathrm{id}}
\newcommand{\Id}{I}
\newcommand{\eps}{\varepsilon}
\renewcommand{\bar}[1]{\overline{#1}}
\newcommand{\ip}[2]{\left\langle #1,#2\right\rangle}   

\newcommand{\Cop}[1]{\mathcal C_{#1}}    
\newcommand{\Fset}{\mathcal{F}}          
\newcommand{\Fint}{\mathcal F^{\circ}}   
\newcommand{\Freg}{F}                    
\newcommand{\Kfun}{\mathscr K}           
\newcommand{\Act}{\mathcal A}            

\theoremstyle{plain}
\newtheorem{theo}{Theorem}[section]
\newtheorem{theorem}[theo]{Theorem}
\newtheorem*{theo*}{Theorem}      
\newtheorem{lemma}[theo]{Lemma}

\newtheorem{proposition}[theo]{Proposition}

\newtheorem{corollary}[theo]{Corollary}

\theoremstyle{definition}

\newtheorem{definition}[theo]{Definition}

\newtheorem{question}{Question}

\theoremstyle{remark}
\newtheorem{remark}[theo]{Remark}
\newtheorem{exa}[theo]{Example}

\newtheorem*{solu*}{Solution}

\title{
Fuglede--Kadison Determinants of Shifted Matrix Semicircles
}

\bigskip
\author{
Vladislav Kargin
}
\thanks{email:
vkargin@binghamton.edu; current address: 4400 Vestal Pkwy East, Department of Mathematics, Binghamton University, Binghamton, 13902-6000, USA}

\begin{document}

\maketitle
\begin{abstract}
We prove a variational formula for the Fuglede--Kadison determinant of a
self-adjoint matrix semicircle
\[
   S_a=a\otimes 1+\sum_j A_j\otimes s_j
\]
with deterministic mean $a$.  For every $z\in\mathbb R$,
\[
   \log\Delta(S_a-z)
   =
   -\frac12+\inf_{b\in\mathcal F}F_z(b),
\]
where $\mathcal F$ is the set of positive matrices $b$ satisfying
$\mathcal C_b-\eta\succeq0$, and $F_z$ is an explicit geodesically convex
functional.  Both sides equal $-\infty$ exactly when $z$ is an atom of
the spectral measure.  The formula was proposed, without proof, in a
recent post of Speicher.  In the centered case $a=0$ it gives a new proof
of the Mai--Speicher capacity formula, and by hermitization it yields a
finite-dimensional variational formula for the logarithmic potential of
the Brown measure of an arbitrary matrix-valued semicircular element.

The proof identifies the determinant with the value of the Dyson action
at its critical point and rewrites this critical-point problem as a
saddle problem in three matrix variables $(b,u,\gamma)$.  For a spectral
parameter $z+i\varepsilon$ in the upper half-plane, eliminating $u$ and
$\gamma$ gives a geodesically convex minimization whose global minimizer
is the positive factor $b_\varepsilon$ in a symmetric unitary
factorization of the matrix Cauchy transform.  The formula on the real
axis follows by letting $\varepsilon\downarrow0$ in the optimal values,
with no compactness assumption on the minimizers.  At points where the
boundary value of the Cauchy transform is nondegenerate, the scales
$b_\varepsilon$ converge and the infimum is attained at their limit
$b_z$; this minimizer lies on the boundary of $\mathcal F$ when $z$ is
in the support of the spectral measure and in its interior when $z$ is
in the resolvent set.
\end{abstract}

\newpage
\contentsmargin{2.55em}
\dottedcontents{section}[3.8em]{}{2.3em}{1pc}
\dottedcontents{subsection}[6.1em]{}{3.2em}{1pc}
\tableofcontents  
\newpage

\section{Introduction and main results}
\label{sec:introduction}

Operator-valued semicircular elements play a central role in free
probability and in the asymptotic analysis of structured random matrices.
We consider self-adjoint matrix semicircles of the form
\[
   S_a
   =
   a\otimes 1
   +
   \sum_{j=1}^r A_j\otimes s_j,
\]
where $a,A_1,\ldots,A_r\in M_m(\mathbb C)$ are Hermitian and
$s_1,\ldots,s_r$ are free standard semicircular variables in a tracial
noncommutative probability space.  The associated covariance map is
\[
   \eta(X)
   =
   \sum_{j=1}^r A_jXA_j .
\]
The operator-valued Cauchy transform $G(w)$ is determined by the
matrix Dyson equation
\begin{equation} \label{eq:dyson}
   G(w)^{-1}+\eta(G(w))=wI-a,
\end{equation}
see, for example, \cite{HeltonRashidiFarSpeicher2007}.

Our object of interest is the Fuglede--Kadison determinant
\cite{FugledeKadison1952},
\[
   \Delta(S_a-z)
   =
   \exp\left\{
      \int_{\mathbb R}\log|t-z|\,d\mu_{S_a}(t)
   \right\},
   \qquad z\in\mathbb C,
\]
where $\mu_{S_a}$ is the scalar spectral measure of $S_a$.  Since $S_a-z$
is a normal operator, this is its Fuglede--Kadison determinant for every
complex $z$.  We are mainly interested in real $z$, where
$\log\Delta(S_a-z)$ is the logarithmic potential of $\mu_{S_a}$ on the
real line; non-real $z$ enter as a regularization.

Two earlier results suggest that this quantity should admit a
finite-dimensional variational description.  On the one hand, Lehner's
formula  \cite{Lehner1999} expresses the spectral edge of a matrix semicircle through an
optimization over positive definite matrices; in our notation its basic
structure is
\[
   \lambda_{\max}(S_a)
   =
   \inf_{b\succ0}
   \lambda_{\max}
   \bigl(
      a+b+\eta(b^{-1})
   \bigr).
\]
On the other hand, Mai and Speicher \cite{MaiSpeicher2024} proved that in the centered case
$a=0$ the Fuglede--Kadison determinant is determined by Gurvits'
capacity of the covariance map:
\[
   \Delta(S_0)
   =
   e^{-1/2}\,
   \operatorname{Cap}(\eta)^{1/(2m)}.
\]
The purpose of the present paper is to give a shifted analogue of the
Mai--Speicher formula and to explain its relation with the Dyson equation.

The question of whether the Mai--Speicher formula admits an extension to
nonzero mean was raised by Speicher in a recent expository
post~\cite{SpeicherBlog2026}. The post also records a candidate formula for
the general case, obtained with the assistance of a large language model
and stated there explicitly as a conjecture, without proof and with the
remark that a formal verification would not by itself explain why such a
formula should hold. The present paper proves that formula and, we hope,
supplies the explanation.

While this paper was being completed, Sheng Yin posted a manuscript
\cite{Yin2026} containing a different, unconstrained variational formula
for the same determinant and, by a different argument, a proof of the
version of \eqref{eq:intro-main-formula} with the infimum taken over
$\operatorname{int}\mathcal F$; see Section~\ref{sec:further-questions}.

Namely, the determinant arises as the critical value of the Dyson action. Its Euler--Lagrange equation is the matrix Dyson equation, so one may compute the determinant by solving for the physical Dyson critical point. The same critical-point problem admits an extended formulation in variables $(b,u,\gamma)$. Maximizing first in $u$ and $\gamma$ produces the functional $F_z(b)$. The advantage of this formulation is geometric: on the feasible set $\mathcal F$, $F_z$ is geodesically convex, and the scale selected by the Dyson solution becomes a global minimizer. Thus the variational formula may be viewed as a convex reduction of the Dyson action.

For $b\succ0$ let
\[
   \mathcal C_b(X):=bXb,
   \qquad
   L_b^+:=\mathcal C_b+\eta, \qquad  L_b^-:=\mathcal C_b-\eta, \qquad a_z := a - z,  
\]
and introduce the functionals
\begin{equation}\label{eq:intro-KK}
   \KK_z(b, u, \gamma)
   :=
   \tr_m\log b + \langle a_z,u\rangle
      -
      \frac12 \left\{
\langle u, L_b^+ u\rangle + \langle \gamma, L_b^-\gamma\rangle
   \right\}, 
\end{equation}
where $\langle X, Y\rangle \equiv \langle X, Y\rangle_2 := \tr_m(X^* Y)$, 
and
\begin{equation}\label{eq:intro-F0}
   F_z(b)
   :=
   \sup_{u=u^*, \gamma \succeq 0}
   \left\{ \KK_z(b, u, \gamma)\right\}.
\end{equation}

Define the feasible set
$
   \FF
   :=
   \left\{
      b\succ0: L_b^- \succeq0
   \right\},
$
where positivity is understood on the Hilbert--Schmidt space
$M_m(\mathbb C)$ and write $\FF^\circ =\interior \FF$. 

Then the formula can be re-written as 
\begin{equation}\label{eq:intro-F}
   F_z(b)
   = \begin{cases}
   \tr_m\log b 
   +
   \sup_{u=u^*}
   \left\{
      \langle a_z,u\rangle
      -
      \frac12\langle u,L_b^+ u\rangle
   \right\}, & b \in \FF, \\
   +\infty & b \notin \FF,
   \end{cases}
\end{equation}
and if $b \in \FF^\circ$, then $L_b^+\succ0$ and
\[
   F_z(b)
   =
   \tr_m\log b
   +
   \frac12
   \left\langle
      a_z,(L_b^+)^{-1}a_z
   \right\rangle.
\]

The formula proposed in~\cite{SpeicherBlog2026}, in our notation, reads
\begin{equation}\label{eq:intro-main-formula}
   \boxed{
   \log\Delta(S_a-z)
   =
   -\frac12
   +
   \inf_{b\in\mathcal F}F_z(b).
   }
\end{equation}

The proof produces a distinguished family of competitors.  For
$\operatorname{Im}w>0$ the matrix $G(w)$ is invertible with
$-\operatorname{Im}G(w)\succ0$, and it admits a symmetric factorization
\begin{equation}\label{eq:intro-pup}
   G(w)
   =
   b(w)^{-1/2}\,U(w)\,b(w)^{-1/2},
   \qquad
   b(w)\succ0,\quad U(w)\in U(m),
\end{equation}
constructed explicitly in Section~\ref{sec:normalizers}.  For real $z$
we write $b_\varepsilon:=b(z+i\varepsilon)$ and call these the
\emph{regularized scales} at $z$.  Our main result is the following.

\begin{theorem}[Variational formula at every real point]
\label{thm:intro-all-points}
For every $z\in\mathbb R$,
\begin{equation}\label{eq:intro-main}
   \inf_{b\in\mathcal F}F_z(b)
   =
   \log\Delta(S_a-z)+\frac12
   \qquad\text{in }[-\infty,\infty).
\end{equation}
Both sides equal $-\infty$ if and only if $z$ is an atom of $\mu_{S_a}$.
The regularized scales form a minimizing sequence:
$F_z(b_\varepsilon)\to\log\Delta(S_a-z)+\frac12$ as
$\varepsilon\downarrow0$.
\end{theorem}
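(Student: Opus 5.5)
The plan is to prove the formula first at a regularized spectral parameter $w=z+i\varepsilon$ and then pass to the real axis through the optimal values only, never through the minimizers.

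\emph{Step 1: the regularized identity.} For $w$ in the upper half-plane I will introduce the analogue $F_w$ of $F_z$, obtained by replacing $a_z$ with $a_w=a-w$ in $\KK$ and pairing $u$ against $a_w$ through a real part. I will show
\[
\inf_{b\in\FF}F_w(b)=F_w(b(w))=\log\Delta(S_a-w)+\tfrac12 .
\]
For the second equality I will use the Dyson action, whose Euler--Lagrange equation is \eqref{eq:dyson}. Its critical value at $G(w)$ has $w$-derivative $\tfrac12\tr_m G(w)$ by the envelope argument. This is also the $\partial_w$-derivative of $\log\Delta(S_a-w)=\int\log|t-w|\,d\mu_{S_a}(t)$. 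I will fix the additive constant by comparing the asymptotics as $w\to i\infty$.

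To identify the critical value with $F_w(b(w))$, I will insert the factorization \eqref{eq:intro-pup}. Writing $G=b^{-1/2}Ub^{-1/2}$, the Hermitian and anti-Hermitian parts of $U$ produce the maximizers $u,\gamma$ of $\KK$. Taking imaginary parts of \eqref{eq:dyson} gives $-\operatorname{Im}G^{-1}-\eta(\operatorname{Im}G)=\varepsilon I\succ0$. In the $b$-normalization I expect this to yield $L^-_{b(w)}\succeq0$, so that $b(w)\in\FF$ and $b(w)$ is a critical point of $F_w$. Global minimality then follows from geodesic convexity of $F_w$ on $\FF$, which is a supremum of functions that are geodesically convex in $b$ because $b\mapsto\langle u,\Cop{b}u\rangle$ is.

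\emph{Step 2: upper bound on the real axis.} Set $b_\varepsilon=b(z+i\varepsilon)\in\FF$. I will compare $F_z(b_\varepsilon)$ with $F_{z+i\varepsilon}(b_\varepsilon)$. The two differ only through the linear term $\langle a_z,u\rangle$ versus the regularized pairing, and the discrepancy is of size $\varepsilon\,|\tr_m u_\varepsilon|$ at the optimizer $u_\varepsilon$. I will bound $u_\varepsilon$ by a quantity like $|\operatorname{Re}G(z+i\varepsilon)|$, so that $\varepsilon\|G(z+i\varepsilon)\|\to0$; this needs only $\varepsilon\,\mu_{S_a}$-type bounds on the Cauchy transform. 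Since $|t-z-i\varepsilon|\downarrow|t-z|$ as $\varepsilon\downarrow0$, monotone convergence gives $\log\Delta(S_a-z-i\varepsilon)\to\log\Delta(S_a-z)$, including the value $-\infty$. Hence
\[
\limsup_{\varepsilon\downarrow0} F_z(b_\varepsilon)\le\log\Delta(S_a-z)+\tfrac12 .
\]

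\emph{Step 3: lower bound.} For an arbitrary $b\in\FF$ I need $F_z(b)\ge\log\Delta(S_a-z)+\tfrac12$. My first attempt will be to show $F_z(b)\ge F_{z+i\varepsilon}(b)-o(1)$ for each fixed $b$. The $\varepsilon$-dependence enters only through the linear term, and for fixed $b\in\FF^\circ$ the optimizing $u$ is bounded by $(L_b^+)^{-1}$, so the error vanishes. Combining this with Step 1 gives
\[
F_z(b)\ge\liminf_{\varepsilon\downarrow0}\Bigl(\log\Delta(S_a-z-i\varepsilon)+\tfrac12\Bigr)\ge\log\Delta(S_a-z)+\tfrac12 ,
\]
where the last inequality is just $|t-z-i\varepsilon|\ge|t-z|$. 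The boundary points $b\in\partial\FF$, where $L_b^+$ may degenerate, I will handle either by lower semicontinuity of $F_z$ along $tb$ with $t\uparrow1$, which moves into $\FF^\circ$ since $\Cop{tb}$ scales like $t^2$, or by directly choosing $u$ in the range of $L_b^+$.

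Together, Steps 2 and 3 show that $F_z(b_\varepsilon)\to\log\Delta(S_a-z)+\tfrac12$ and that this limit is the infimum. The atom statement follows because $\int\log|t-z|\,d\mu_{S_a}=-\infty$ iff $\mu_{S_a}(\{z\})>0$: the semicircular density has bounded logarithmic singularities away from atoms, which I will cite or deduce from boundedness of $\operatorname{Im}G$ near non-atoms. I expect the main obstacle to be the uniform control of the discrepancy $\varepsilon|\tr_m u_\varepsilon|$ in Step 2 when $z$ lies in the support or is an atom, since then $b_\varepsilon$ may degenerate and no compactness is available.
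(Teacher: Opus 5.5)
Your overall architecture matches the paper's: regularize at $z+i\varepsilon$, then pass to the real axis through optimal values with fixed competitors. However, the regularized problem in Step 1 is the wrong one.

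\textbf{The regularization does not regularize.} For Hermitian $u$ and $w=z+i\varepsilon$ one has $\operatorname{Re}\langle a_w,u\rangle=\operatorname{Re}\operatorname{tr}_m\bigl((a-\bar w)u\bigr)=\langle a_z,u\rangle$. So your $F_w$ is just $F_z$, and Step 1 would assert that $\log\Delta(S_a-z-i\varepsilon)$ does not depend on $\varepsilon$. That is false: it is strictly increasing in $\varepsilon$.

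The imaginary part of $w$ must be paired with $\gamma$, not with $u$. For $M=-u-i\gamma$ one has $\operatorname{Re}\operatorname{tr}_m\bigl((wI-a)M\bigr)=\langle a_z,u\rangle+\varepsilon\operatorname{tr}_m\gamma$, so the correct object is $\KK_z+\varepsilon\operatorname{tr}_m\gamma$. The term $\varepsilon\operatorname{tr}_m u$ that you call the discrepancy lives in the imaginary part and plays no role.

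Only with the $\varepsilon\operatorname{tr}_m\gamma$ term is the $\gamma$-maximizer $\varepsilon(L_b^-)^{-1}I\succ0$ rather than $0$, and equal to $\gamma_\varepsilon$ at $b=b_\varepsilon$. Only then is $b_\varepsilon$ stationary. The reduced gradient $b^{-1}-u_bbu_b-\gamma_bb\gamma_b$ vanishes at $b_\varepsilon$ by the normalization $b^{-1}=ubu+\gamma b\gamma$. By contrast, the gradient of $F_z$ at $b_\varepsilon$ is $\gamma_\varepsilon b_\varepsilon\gamma_\varepsilon\neq0$.

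Feasibility of $b_\varepsilon$ also needs an argument. The imaginary Dyson equation is the identity $b\gamma b-\eta(\gamma)=\varepsilon I$ (note the sign), not an operator inequality. Strict feasibility follows by pairing $(I-\widehat\eta_b)Q=\varepsilon b^{-1}$ with a Perron eigenmatrix of $\widehat\eta_b$.

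With the correct functional, $F_{z,\varepsilon}=F_z+\frac{\varepsilon^2}{2}\langle I,(L_b^-)^{-1}I\rangle$ on $\Fint$ and $+\infty$ off $\Fint$. The obstacle you anticipate in Step 2 then disappears. The added term is nonnegative, so $F_z(b_\varepsilon)\le F_{z,\varepsilon}(b_\varepsilon)=\log\Delta(S_a-z-i\varepsilon)+\frac12$, with no control of $u_\varepsilon$ needed. Your Step 3 becomes exactly the paper's fixed-competitor argument.

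\textbf{The convexity step has the wrong sign.} Since $b\mapsto\langle u,\Cop{b}u\rangle$ is geodesically convex, $b\mapsto\KK(b,u,\gamma)$ is geodesically \emph{concave} for fixed $(u,\gamma)$. A supremum of concave functions gives no convexity. Convexity of $F_z$ genuinely depends on the constraint. In the scalar model, the second derivative of $\log b+d^2/(2(b^2+\sigma^2))$ with respect to $\log b$ is $2d^2b^2(b^2-\sigma^2)/(b^2+\sigma^2)^3$, which for $d\neq0$ is nonnegative exactly on $\Fset=[\sigma,\infty)$.

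What you need instead is geodesic convexity of $b\mapsto(L_b^\pm)^{-1}$ on $\Fint$. This comes from the geodesic equation for $\Cop{b_t}$ and the Cayley factors $(I\mp\widehat\eta_b)(I\pm\widehat\eta_b)^{-1}$, which are positive precisely because $\|\widehat\eta_b\|<1$. You also need geodesic convexity of $\Fint$ itself, so that the geodesic from $b_\varepsilon$ to a competitor stays in the domain.

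\textbf{Two smaller points.} First, the atom characterization cannot rest on boundedness of $\operatorname{Im}G$ near non-atoms. For $\bigl(\begin{smallmatrix}0&s\\ s&-1\end{smallmatrix}\bigr)$ the density behaves like $\lambda^{-1/2}$ as $\lambda\downarrow0$. For a general measure, non-atomicity does not imply log-integrability. The paper uses algebraicity of $G$ and a Puiseux expansion to get a density $O(|t-z|^{-\theta})$ with $\theta<1$, hence in $L^p$ for some $p>1$.

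Second, boundary points must be approached by $\lambda b$ with $\lambda\downarrow1$; for $t<1$ and $b\in\partial\Fset$ one has $tb\notin\Fset$. The tool is $F_z(\lambda b)\le F_z(b)+\log\lambda$, which follows from $L^+_{\lambda b}\succeq L^+_b$. Lower semicontinuity gives the inequality in the wrong direction.
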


Whether the infimum is attained depends on the boundary behaviour of the
Cauchy transform at $z$.

\begin{definition}[Nondegenerate regular point]
\label{def:nondegenerate-regular}
A point $z\in\mathbb R$ is called \emph{nondegenerate regular} if the
boundary value
$
   G(z+i0)
$
exists, and either
$
   -\operatorname{Im}G(z+i0)\succ0,
$
or $z$ belongs to an open component of
$
   \mathbb R\setminus\supp\mu_{S_a}.
$
\end{definition}

Whenever the boundary value exists it is automatically invertible
(Lemma~\ref{lem:boundary-invertible}).  Nondegenerate regular points
include all points at which the spectral density is positive definite as
a matrix, and all points of the resolvent set.

\begin{theorem}[Attainment at nondegenerate regular points]
\label{thm:intro-attainment}
Let $z$ be a nondegenerate regular point.  Then the regularized scales
converge, $b_z:=\lim_{\varepsilon\downarrow0}b_\varepsilon$ exists, and
the infimum in \eqref{eq:intro-main} is attained at $b_z$.  If $-\operatorname{Im}G(z+i0)\succ0$, then $b_z\in\partial\mathcal F$.
If $z$ lies in the resolvent set, then
$b_z\in\operatorname{int}\mathcal F$; if moreover $z$ lies in an
unbounded component of the resolvent set, then
$
   b_z=|G(z)|^{-1}.
$
\end{theorem}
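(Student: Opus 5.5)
Throughout I take Theorem~\ref{thm:intro-all-points} as given: the regularized scales are a minimizing sequence, $F_z(b_\varepsilon)\to\log\Delta(S_a-z)+\frac12=\inf_{\FF}F_z$. The plan is to show in turn that $b_\varepsilon$ converges, that the limit is a minimizer, and where in $\FF$ it lies.

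\emph{Step 1: convergence of the scales.} Because $z$ is nondegenerate regular, $G(z+i\varepsilon)\to G_0:=G(z+i0)$, and $G_0$ is invertible by Lemma~\ref{lem:boundary-invertible}. The factorization \eqref{eq:intro-pup} is built explicitly in Section~\ref{sec:normalizers}. I expect $b(w)$ there to be a continuous function of $G(w)$ on the set of invertible $G$ with $-\operatorname{Im}G\succeq0$; plausibly it is the positive solution of a Riccati-type equation, or comes from a geometric mean of $-\operatorname{Im}G^{-1}$ with a related matrix. I will verify that this map extends continuously to $G_0$. If $-\operatorname{Im}G_0\succ0$, this is continuity of the construction on the open cone. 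If $z$ lies in the resolvent set, then $G_0$ is Hermitian and invertible, and I will read off the limit $b_z$ directly from the construction. The key input is that $b_\varepsilon$ stays bounded and bounded away from $0$. This should follow from $b_\varepsilon^{-1/2}U_\varepsilon b_\varepsilon^{-1/2}=G_\varepsilon$ together with the invertibility of $G_0$, and it is the main obstacle. In the degenerate resolvent case I will try to show $b_z=|G_0|^{-1}$ up to the unitary ambiguity, since $G_0$ Hermitian forces $U$ to be a symmetry.

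\emph{Step 2: attainment.} The set $\FF$ is closed in $\{b\succ0\}$, so $b_z\in\FF$. The functional $F_z$ is a supremum of functions continuous in $b$, namely $\KK_z(\cdot,u,\gamma)$, so it is lower semicontinuous. Hence
\[
F_z(b_z)\le\liminf_{\varepsilon\downarrow0} F_z(b_\varepsilon)=\inf_\FF F_z,
\]
so $b_z$ attains the infimum. No continuity of $F_z$ up to $\partial\FF$ is needed for this.

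\emph{Step 3: location of $b_z$.} The critical-point analysis should give the relations
\[
L_{b_\varepsilon}^-(\gamma_\varepsilon)=0\ \text{in the limit},\qquad \gamma_\varepsilon\propto -\operatorname{Im}\bigl(b_\varepsilon^{1/2}G_\varepsilon b_\varepsilon^{1/2}\bigr)\ \text{or similar}.
\]
I will check this from the identity obtained by taking imaginary parts in \eqref{eq:dyson}:
\[
-\operatorname{Im}G^{-1}=\varepsilon+\eta(-\operatorname{Im}G),
\]
which should translate into
\[
\mathcal C_{b_\varepsilon}(X_\varepsilon)-\eta(X_\varepsilon)=\varepsilon\,(\text{positive}), \qquad X_\varepsilon:=-\operatorname{Im}G_\varepsilon.
\]
If $X_0:=-\operatorname{Im}G_0\succ0$, passing to the limit gives $L_{b_z}^-X_0=0$ with $X_0\ne0$. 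So $L_{b_z}^-$ has a kernel, and since $b_z\in\FF$ this means $b_z\in\partial\FF$.

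For $z$ in the resolvent set, the Hermitian $G_0$ satisfies $G_0^{-1}+\eta(G_0)=z-a$. I will show $L_{b_z}^-\succ0$ by exhibiting a strictly positive $X$ with $\mathcal C_{b_z}X-\eta X\succ0$, and then applying a Perron--Frobenius / Krein--Rutman argument for positive maps. A natural candidate is $X=-G'(z)=G_0G_0'$-type, i.e.\ $X=-\partial_zG(z)\succ0$. Differentiating the Dyson equation gives
\[
G^{-1}G'G^{-1}=1+\eta(G'),
\]
which I will combine with the factorization to obtain the strict inequality. For the unbounded component, continuity from $z\to\pm\infty$, where $G$ is definite, forces $U=\pm I$, giving $b_z=|G(z)|^{-1}$.
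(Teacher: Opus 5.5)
\textbf{Verdict: genuine gap.} Steps 2 and 3 are correct when $-\operatorname{Im}G(z+i0)\succ0$, but Step 1 fails on the resolvent set, and the error carries into your interior argument there.

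\textbf{What matches the paper.} In the case $-\operatorname{Im}G(z+i0)\succ0$ your argument is the paper's. You use continuity of the $pUp$ formula on the open cone, and lower semicontinuity of $F_z$ as a supremum of functions continuous in $b$. You then pass to the limit in the exact identity $L^-_{b_\eps}\gamma_\eps=\eps I$, which gives $\widehat\eta_{b_z}$ the eigenvalue $1$ and hence $b_z\in\partial\FF$. (A sign slip: the correct relation is $\operatorname{Im}G^{-1}=\eps I+\eta(-\operatorname{Im}G)$.)

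\textbf{The gap: Step 1 on the resolvent set.} The normalizer map $G\mapsto b_*(G)$ of Lemma~\ref{lem:pup} does \emph{not} extend continuously to Hermitian indefinite $G$. Nor do the factorization and invertibility of $G_0$ bound $b_\eps$.
\begin{itemize}
\item For $G_0=\left(\begin{smallmatrix}0&1\\1&0\end{smallmatrix}\right)$, every $b=\operatorname{diag}(t,1/t)$ with $t>0$ is a unitary normalizer.
\item Along $G_0-i\eps\operatorname{diag}(d_1,d_2)$, the $pUp$ scales converge to $\operatorname{diag}(t,1/t)$ with $t=\sqrt{d_2/d_1}$. So the limit depends on the direction of approach.
\item Letting $d_2/d_1$ grow as $\eps\downarrow0$, for example $\operatorname{diag}(d_1,d_2)=\operatorname{diag}(1,\eps^{-1/2})$, gives unbounded scales $b_\eps\approx\operatorname{diag}(\eps^{-1/4},\eps^{1/4})$ while $G_\eps\to G_0$.
\end{itemize}
This is also why the paper leaves attainment open at degenerate regular points, where $G(x+i0)$ exists and is invertible. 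Even if you had a bound, it would only give cluster points, and non-uniqueness of normalizers of an indefinite $G_0$ would not let you identify them. For the same reason, your candidate $b_z=|G_0|^{-1}$ is wrong in bounded gaps where $G_0$ is indefinite.

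\textbf{The missing idea: the first transverse derivative.} Write $G(z+i\eps)=X_\eps-iY_\eps$. In a gap, $G$ is analytic and Hermitian on the real axis, so
$Y_\eps=\eps D+O(\eps^3)$ with $D=-G'(z)=\int(z-t)^{-2}\,dv(t)\succ0$, where $v$ is the matrix spectral measure. Writing $Y_\eps=\eps D_\eps$ in \eqref{eq:app-pup-normalizer} cancels the powers of $\eps$:
\[
b_\eps^{-1}=D_\eps^{1/2}\bigl(\eps^2I+T_\eps^2\bigr)^{1/2}D_\eps^{1/2},\qquad T_\eps:=D_\eps^{-1/2}X_\eps D_\eps^{-1/2}\longrightarrow T:=D^{-1/2}G_0D^{-1/2}.
\]
Since $T$ is invertible, $b_\eps^{-1}\to D^{1/2}|T|D^{1/2}$. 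Equivalently, $b_z^{-1}=D\#_{1/2}(G_0D^{-1}G_0)$. This equals $|G_0|$ only when $G_0$ is definite, which is the unbounded-component case; there your inertia argument is fine.

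\textbf{Consequence for Step 3 in a gap.} Differentiating the Dyson equation gives $G_0^{-1}DG_0^{-1}-\eta(D)=I$. This becomes $(\Cop{b_z}-\eta)(D)=I$ only because the selected scale satisfies $b_zDb_z=G_0^{-1}DG_0^{-1}$, which is equivalent to $b_z^{-1}D^{-1}b_z^{-1}=G_0D^{-1}G_0$. A generic unitary normalizer of $G_0$, such as $|G_0|^{-1}$, does not satisfy this. With the correct $b_z$ your route works. Alternatively, as the paper does, divide $L^-_{b_\eps}\gamma_\eps=\eps I$ by $\eps$ and let $\eps\downarrow0$ using $\gamma_\eps/\eps\to D$. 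In either case, pairing with a Perron eigenmatrix of $\widehat\eta_{b_z}$ then gives $\|\widehat\eta_{b_z}\|<1$.
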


At points where $-\operatorname{Im}G(z+i0)$ is nonzero but singular,
which can fill the whole support of $\mu_{S_a}$ (Example~\ref{exa:coupled}),
attainment follows from boundedness of $b_\eps$ along a sequence, which we do
not know how to prove in general. 

In the centered case the variational problem depends only on the
covariance map and reduces to Gurvits' capacity.  Recall
\[
   \operatorname{Cap}(\eta)
   :=
   \inf
   \left\{
      \det\eta(X):
      X\succ0,\ \det X=1
   \right\},
\]
and write $\Delta(b):=(\det b)^{1/m}=\exp\bigl(\tr_m\log b\bigr)$ for
$b\succ0$.

\begin{theorem}[Centered problem and capacity]
\label{thm:intro-capacity}
For $a=0$,
\begin{equation}\label{eq:intro-capacity}
   \inf_{b\in\mathcal F}\Delta(b)
   =
   \operatorname{Cap}(\eta)^{1/(2m)}.
\end{equation}
Consequently,
\[
   \log\Delta(S_0)
   =
   -\frac12
   +
   \frac1{2m}\log\operatorname{Cap}(\eta),
\]
which is the Mai--Speicher formula.  Moreover, the infimum in
\eqref{eq:intro-capacity} is attained if and only if $\eta$ is
symmetrically doubly-stochastic scalable.
\end{theorem}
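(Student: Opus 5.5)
The plan is first to reduce to a pure matrix statement and then to prove the capacity identity directly, by rescaling and a Perron--Frobenius argument.

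\textbf{Reduction.} For $a=0$ and $z=0$ we have $a_z=0$. The supremum over $u$ in \eqref{eq:intro-F} is therefore $0$ (attained at $u=0$), and $F_0(b)=\tr_m\log b=\log\Delta(b)$ on $\mathcal F$. Once \eqref{eq:intro-capacity} is proved, Theorem~\ref{thm:intro-all-points} at $z=0$ gives the Mai--Speicher consequence immediately. So everything reduces to \eqref{eq:intro-capacity} and the attainment criterion.

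\textbf{Rescaling.} Substituting $X=b^{-1/2}Yb^{-1/2}$ in $\langle X,bXb\rangle\ge\langle X,\eta(X)\rangle$ shows
\[
b\in\mathcal F \iff \|\eta_b\|\le 1,\qquad \eta_b(Y):=b^{-1/2}\eta\bigl(b^{-1/2}Yb^{-1/2}\bigr)b^{-1/2}.
\]
Here $\eta_b$ is a completely positive map, self-adjoint on Hilbert--Schmidt space, and the norm is the Hilbert--Schmidt operator norm. Gurvits' capacity transforms by $\operatorname{Cap}(\eta_b)=\operatorname{Cap}(\eta)/(\det b)^2$.

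\textbf{Lower bound.} I will show $\operatorname{Cap}(\eta_b)\le1$ whenever $\|\eta_b\|\le1$. By the Krein--Rutman/Perron--Frobenius theorem for positive maps on the cone of positive semidefinite matrices, there is $X\succeq0$, $X\ne0$, with $\eta_b(X)=\rho X$ and $\rho=\|\eta_b\|\le1$. If $X\succ0$, then
\[
\operatorname{Cap}(\eta_b)\le \det\eta_b(X)/\det X=\rho^m\le1 .
\]
If $X$ is singular, I would apply the same argument to $\eta_b+\delta\Phi$ with $\Phi(Y)=\tr(Y)I$. This map is positivity improving, so its Perron vector is definite. Letting $\delta\downarrow0$ and using upper semicontinuity of $\operatorname{Cap}$ (an infimum of continuous functions) finishes this case. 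Hence $(\det b)^2\ge\operatorname{Cap}(\eta)$, i.e.\ $\Delta(b)\ge\operatorname{Cap}(\eta)^{1/(2m)}$ on $\mathcal F$.

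\textbf{Upper bound and attainment.} Suppose $\eta$ is symmetrically doubly-stochastic scalable, i.e.\ there is $b\succ0$ with $\eta_b(I)=I$. Since $\eta_b$ is self-adjoint, it is then also trace preserving. A unital, trace-preserving positive map is a Hilbert--Schmidt contraction: by Kadison--Schwarz, $\eta_b(Y)^*\eta_b(Y)\le\eta_b(Y^*Y)$, and taking traces gives the bound. Hence $b\in\mathcal F$. Moreover $\operatorname{Cap}(\eta_b)=1$ for doubly stochastic maps (Gurvits), so $(\det b)^2=\operatorname{Cap}(\eta)$ and the infimum is attained.

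For general $\eta$ I will use $\eta^\delta:=\eta+\delta\Phi$. This map is positivity improving, hence symmetrically scalable. For self-adjoint maps, operator Sinkhorn scaling can be symmetrized by taking the geometric mean of the left and right scalings; I expect this to be the main obstacle, and would first try to cite Gurvits directly. The resulting $b^\delta$ lies in $\mathcal F(\eta^\delta)\subset\mathcal F(\eta)$, because $\eta\le\eta^\delta$ as quadratic forms on Hilbert--Schmidt space ($\Phi$ is positive semidefinite there). Thus
\[
\inf_{b\in\mathcal F}\Delta(b)\le\operatorname{Cap}(\eta^\delta)^{1/(2m)}\longrightarrow\operatorname{Cap}(\eta)^{1/(2m)} .
\]
The limit requires continuity of $\operatorname{Cap}$ under this monotone perturbation: upper semicontinuity holds as noted, and monotonicity $\operatorname{Cap}(\eta)\le\operatorname{Cap}(\eta^\delta)$ supplies the other inequality.

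Conversely, suppose the infimum is attained at some $b$. Then $\operatorname{Cap}(\eta_b)=1$ and $\|\eta_b\|\le1$. Equality in the Perron argument forces $\rho=1$, and I will show this also forces the capacity to be attained at $X=I$. The idea is to use that $\log\det\eta_b(X)-\log\det X$ is minimized at a definite point, and to compare the first-order condition $\eta_b^{*}(\eta_b(X)^{-1})=X^{-1}$ with the norm bound. This should give $\eta_b(I)=I$, i.e.\ $\eta$ is symmetrically doubly-stochastic scalable. This equality-case analysis is the step I expect to require the most care.
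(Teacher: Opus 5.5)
Your reduction, rescaling, treatment of the scalable case, and the regularization $\eta+\delta\Phi$ with the continuity of $\operatorname{Cap}$ match the paper. Your Perron--Frobenius lower bound is a valid, if roundabout, alternative.

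\textbf{The gap: the converse of the attainment criterion.} Your sketch assumes that $X\mapsto\log\det\eta_b(X)-\log\det X$ has a positive definite minimizer. Nothing in your argument guarantees that the capacity of $\eta_b$ is attained: your Perron vector may be singular, and capacity infima are in general not attained. Even granting a minimizer $X_0$, the stationarity condition $\eta_b(\eta_b(X_0)^{-1})=X_0^{-1}$ (using $\eta_b^*=\eta_b$) only yields a two-sided doubly stochastic scaling by $\eta_b(X_0)^{-1/2}$ and $X_0^{1/2}$. It does not by itself give $\eta_b(I)=I$.

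The missing idea is that $X=I$ is always a competitor. This gives the chain
\[
1=\operatorname{Cap}(\eta_b)\le\det\eta_b(I)\le\bigl(\tr_m\eta_b(I)\bigr)^m\le1 .
\]
The middle step is AM--GM. The last step is $\tr_m\eta_b(I)=\langle I,\eta_b(I)\rangle\le\langle I,I\rangle=1$, obtained by pairing $I-\eta_b\succeq0$ with $I$. Equality in AM--GM forces $\eta_b(I)=I$, i.e.\ $\eta(C)=C^{-1}$ with $C=b^{-1}$.

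The paper uses this same chain for the lower bound too. It gives $\operatorname{Cap}(\eta_b)\le1$ on $\mathcal F$ directly, so neither Perron--Frobenius nor the perturbation is needed. Unlike your route, it also records exactly the equality case that the converse requires.

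\textbf{Smaller points.} In your singular-Perron-vector case, upper semicontinuity of $\operatorname{Cap}$ points the wrong way. What you actually use is monotonicity:
\[
\operatorname{Cap}(\eta_b)\le\operatorname{Cap}(\eta_b+\delta\Phi)\le\|\eta_b+\delta\Phi\|^m\to\rho^m\le1 .
\]

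For the symmetric scalability of $\eta+\delta\Phi$, the paper simply cites an external result. Your geometric-mean idea can be made rigorous, but it first yields only an inequality.
\begin{itemize}
\item A two-sided scaling of the self-adjoint map $\eta$ gives $P,Q\succ0$ with $\eta(P)=Q^{-1}$ and $\eta(Q)=P^{-1}$.
\item Ando's inequality then gives $\eta(P\#Q)\preceq\eta(P)\#\eta(Q)=(P\#Q)^{-1}$, so $b=(P\#Q)^{-1}$ satisfies $\eta_b(I)\preceq I$.
\item Kadison--Schwarz for sub-unital completely positive maps, together with self-adjointness, makes $\eta_b$ a Hilbert--Schmidt contraction, so $b\in\mathcal F$.
\item Moreover $\det(b)^2=(\det P\det Q)^{-1}=\operatorname{Cap}(\eta)$.
\end{itemize}
The equality chain above then upgrades the inequality to $\eta(P\#Q)=(P\#Q)^{-1}$.
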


This gives a proof of the Mai--Speicher formula that is independent of
\cite{MaiSpeicher2024}.  Its external inputs are the matrix Dyson equation
\cite{HeltonRashidiFarSpeicher2007}, the Perron--Frobenius theorem for
positive maps \cite{EvansHoeghKrohn1978}, the operator Jensen inequality
\cite{HansenPedersen1982}, and the symmetric doubly-stochastic scalability
of strictly positive maps \cite{KarginNonSingular2026}.  

Finally, the restriction to self-adjoint elements and real spectral
parameters can be removed by hermitization.

\begin{corollary}[Brown measure of matrix-valued semicircular elements]
\label{cor:intro-brown}
Let $a_0,a_1,\dots,a_r\in M_m(\mathbb C)$ be arbitrary and
$S=a_0\otimes1+\sum_ia_i\otimes s_i$.  For $z\in\mathbb C$ let
$\mathcal F^{\mathrm h}\subset M_{2m}(\mathbb C)$ and
$F^{\mathrm h}_z$ be the feasible set and the functional
\eqref{eq:intro-F} of the self-adjoint matrix semicircle
\[
   H_z
   =
   \begin{pmatrix}0&S-z\\ S^*-\bar z&0\end{pmatrix}
   \in M_{2m}(\mathbb C)\otimes\mathcal A,
\]
at the spectral parameter $0$.  Then, for every $z\in\mathbb C$,
\[
   \log\Delta(S-z)
   =
   -\frac12+\inf_{b\in\mathcal F^{\mathrm h}}F^{\mathrm h}_z(b)
   \qquad\text{in }[-\infty,\infty),
\]
with both sides equal to $-\infty$ if and only if $S-z$ has a nontrivial
kernel, and with the infimum attained whenever $S-z$ is invertible.  In
particular, the Brown measure of $S$ is $\frac1{2\pi}\nabla_z^2$ of the
right-hand side.
\end{corollary}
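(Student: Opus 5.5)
The plan is to reduce Corollary~\ref{cor:intro-brown} to Theorems~\ref{thm:intro-all-points} and~\ref{thm:intro-attainment}, applied to the hermitization at the spectral parameter $0$. The work lies in checking that $H_z$ is a self-adjoint matrix semicircle of the required form, relating its Fuglede--Kadison determinant to that of $S-z$, and translating the spectral conditions.

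First I write
\[
H_z=\hat a\otimes1+\sum_j \hat A_j\otimes t_j ,
\qquad
\hat a=\begin{pmatrix}0&a_0-z\\ a_0^*-\bar z&0\end{pmatrix}.
\]
The coefficients $a_i$ are not Hermitian, so $S$ is not directly of the form $a\otimes 1+\sum_j A_j\otimes s_j$. I handle this with the standard decomposition
\[
a_i\otimes s_i=\tfrac12\bigl((a_i+a_i^*)\otimes s_i\bigr)+\tfrac{1}{2i}\bigl((a_i-a_i^*)\otimes s_i\bigr)\cdot(\dots).
\]
More simply, I take the block matrices $\hat A_i=\begin{pmatrix}0&a_i\\ a_i^*&0\end{pmatrix}$. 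These are Hermitian and multiply the same real semicircular $s_i$, so $H_z=\hat a\otimes 1+\sum_i\hat A_i\otimes s_i$ with Hermitian coefficients and the same free semicircular family. Thus $H_z$ is a self-adjoint matrix semicircle in $M_{2m}(\mathbb C)\otimes\mathcal A$, and the earlier theorems apply to it with $z$ replaced by $0$ and $m$ replaced by $2m$. In particular $\mathcal F^{\mathrm h}$ and $F^{\mathrm h}_z$ are the objects defined in \eqref{eq:intro-F} with the covariance map $\hat\eta(X)=\sum_i\hat A_iX\hat A_i$.

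Second, I relate the determinants. Since $H_z^2=\mathrm{diag}\bigl((S-z)(S-z)^*,\,(S-z)^*(S-z)\bigr)$ and the trace on $M_{2m}\otimes\mathcal A$ is $\tr_{2m}\otimes\tau$, I get
\[
\log\Delta(H_z)=\tfrac12\log\Delta(H_z^2)
=\tfrac12\cdot\tfrac12\bigl(\log\Delta(|(S-z)^*|^2)+\log\Delta(|S-z|^2)\bigr)
=\log\Delta(S-z).
\]
This uses $\Delta(T^*)=\Delta(T)$ and $\Delta(|T|^2)=\Delta(T)^2$. These identities hold in $[-\infty,\infty)$, including the singular case, by monotone limits of $\log(|T|^2+\delta)$. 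Theorem~\ref{thm:intro-all-points} at spectral parameter $0$ then gives the formula immediately. Both sides are $-\infty$ exactly when $0$ is an atom of $\mu_{H_z}$, that is, when $\ker H_z\neq0$. By traciality, $\tau(\ker(S-z))=\tau(\ker(S-z)^*)$, and $\ker H_z=\ker(S-z)^*\oplus\ker(S-z)$, so this happens exactly when $S-z$ has a nontrivial kernel.

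Third, I treat attainment. If $S-z$ is invertible, then $H_z$ is invertible, so $0$ lies in the resolvent set of $H_z$. By Theorem~\ref{thm:intro-attainment}, $0$ is a nondegenerate regular point and the infimum is attained, at a point of $\operatorname{int}\mathcal F^{\mathrm h}$. The Brown measure statement follows from the definition $\mu_S=\frac1{2\pi}\nabla^2\log\Delta(S-z)$ in the distributional sense.

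I expect the main obstacle to be the first step. I must check that the hermitization really fits the framework with the same semicircular family, and in particular that the paper's standing assumptions, such as $r\ge1$ and freeness, are preserved. I also need the boundary determinant identities to hold in the degenerate case, which I handle via $\delta\downarrow0$ monotone convergence.
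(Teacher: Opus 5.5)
Your proposal is correct and follows the paper's proof essentially step for step. The common ingredients are the Hermitian block coefficients $\begin{pmatrix}0&a_i\\ a_i^*&0\end{pmatrix}$ multiplying the original $s_i$, and the identity $\Delta(H_z)=\Delta(S-z)$; the paper quotes the latter from Mai--Speicher, whereas you derive it from $H_z^2=\operatorname{diag}\bigl((S-z)(S-z)^*,(S-z)^*(S-z)\bigr)$. They also include the decomposition of $\ker H_z$ with equal traces of the kernel projections of $S-z$ and $(S-z)^*$, and attainment because $0$ lies in a gap of $\mu_{H_z}$. The half-written splitting of $a_i\otimes s_i$ into Hermitian and anti-Hermitian parts is unnecessary and should be deleted, since the block coefficients already give a self-adjoint matrix semicircle over the same free family.
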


Here $\mathcal F^{\mathrm h}$ does not depend on $z$, and on its interior
$F^{\mathrm h}_z$ is a real quadratic polynomial in
$(\operatorname{Re}z,\operatorname{Im}z)$; the logarithmic potential of
the Brown measure is thus an infimum of quadratic polynomials indexed by a
fixed geodesically convex set.

\textbf{Mechanism of the proof.}
The
logarithmic determinant has an exact representation
\[
     \log\Delta(S_a-w) = \operatorname{Re}H(w),
\]
where $H$ is the Dyson action evaluated at the Dyson solution:
 \[
   H(w)
   =
   -\frac1m\log\det G(w)
   +
   \frac12\tr_m\bigl(G(w)\eta(G(w))\bigr).
\]
 A three-variable functional encodes the real and imaginary parts
of the Dyson equation; maximizing over its auxiliary variables produces the
constrained objective. To prove global minimality, we first work in the upper
half-plane with a regularized geodesically convex objective. The real-axis
formula follows by a limit of optimal values, without assuming convergence of
the minimizing scales.

\textbf{Organization of the paper.}
Section~\ref{sec:scalar-model} works out the scalar semicircle as a model
example. Section~\ref{sec:closed-formula} establishes the closed determinant
identity giving a formula for the FK determinant of the shifted matrix semicircular in terms of the Dyson action.  Section~\ref{sec:dyson-action}  relates the Dyson action to the variational problem. Section~\ref{sec:geometry} develops the geometry of the feasible
set and proves geodesic convexity of both $\mathcal F$ and $F_z$.
 Section~\ref{sec:normalizers} develops the $pUp$ factorization
\eqref{eq:intro-pup} and the balanced Dyson equations, and
Section~\ref{sec:regularized} sets and solves the regularized variational problem.
Section~\ref{sec:real-axis} proves the variational formula for all real points
(Theorem~\ref{thm:real-axis}), and
Section~\ref{sec:attainment} proves the attainment at non-degenerate regular points
 (Corollary~\ref{cor:regularized-value-limit}) and locates the
    minimizer relative to $\partial\mathcal F$
    (Proposition~\ref{prop:active-set}). Section \ref{sec:brown} treats Brown measures (Corollary~\ref{cor:brown}).  Section~\ref{sec:capacity} treats the
centered case (Theorem~\ref{thm:intro-capacity}) and Section~\ref{sec:further-questions} lists
open questions.  Appendix~\ref{app:normalizer-limits} proves the
convergence of the regularized scales at nondegenerate regular points, and
Appendix~\ref{app:local-structure} proves the local regularity of the
spectral measure.

\textbf{On the use of language models.}
This work was carried out in sustained interaction with large language
models (Claude and ChatGPT). Several of the ideas that shaped the
approach --- including the choice of balanced coordinates and the use of
geodesic convexity --- emerged in that interaction rather than from the
author alone. All statements and proofs in the paper were subsequently
worked out, checked, and rewritten by the author, who takes full
responsibility for their correctness. We record this because the paper
answers a question that was itself posed as an experiment in how such
models can contribute to mathematics.

\section{The scalar model}
\label{sec:scalar-model}

We begin by testing the variational formula in the scalar case.  Besides
checking the normalization, this example already displays the three features
that will reappear in the matrix problem: inside the spectral support the
optimizer is pinned to the boundary of $\Fset$, outside the support it moves
into the interior and satisfies the Dyson equation, and the transition occurs
exactly at the spectral edge.

Take $m=1$ and
$S_a=a+\sigma s,$
   with $\sigma>0,$
where $s$ is a standard semicircular variable.  Then
\[
   \eta(x)=\sigma^2x,
   \qquad
   \mathcal C_b(x)=b^2x.
\]
Hence the feasibility condition
$
   \mathcal C_b-\eta\succeq0
$
reduces to
$
   b^2\geq\sigma^2,
$
and therefore
\begin{equation}\label{eq:scalar-feasible}
   \Fset=[\sigma,\infty).
\end{equation}

For $b\geq\sigma$,
$
   L_b^+=b^2+\sigma^2,
$
so the variational functional becomes
\begin{equation}\label{eq:scalar-F}
   F_z(b)
   =
   \log b
   +
   \frac{(a-z)^2}{2(b^2+\sigma^2)}.
\end{equation}
Put
$
   d:=|a-z|.
$
Differentiating gives
\begin{align}
   F_z'(b)
   &=
   \frac1b
   -
   \frac{d^2b}{(b^2+\sigma^2)^2}
   \notag\\
   &=
   \frac{b}{(b^2+\sigma^2)^2}
   \left[
      \left(
         b+\frac{\sigma^2}{b}
      \right)^2-d^2
   \right].
   \label{eq:scalar-derivative}
\end{align}
The function
\[
   h(b):=b+\frac{\sigma^2}{b}
\]
is increasing on $[\sigma,\infty)$ and satisfies
$
   h(\sigma)=2\sigma.
$

\subsection*{Inside the support}

Suppose
$
   d\leq2\sigma.
$
Then, for every $b\geq\sigma$,
\[
   b+\frac{\sigma^2}{b}
   \geq
   2\sigma
   \geq
   d,
\]
and therefore \eqref{eq:scalar-derivative} gives
$
   F_z'(b)\geq0.
$
Thus the minimum is attained at the boundary point
\begin{equation}\label{eq:scalar-min-inside}
   b_*=\sigma.
\end{equation}
Consequently,
\[
   -\frac12+F_z(b_*)
   =
   \log\sigma-\frac12+\frac{(a-z)^2}{4\sigma^2}.
\]
This is precisely the logarithmic potential of the semicircle law on
$
   [a-2\sigma,a+2\sigma]
$
inside its support:
\begin{equation}\label{eq:scalar-potential-inside}
   \log\Delta(S_a-z)
   =
   \log\sigma-\frac12+\frac{(a-z)^2}{4\sigma^2},
   \qquad
   |a-z|\leq2\sigma.
\end{equation}

Thus, inside the support, the optimizer is constrained:
$
   b_*\in\partial\Fset.
$

\subsection*{Outside the support}

Now suppose
$
   d>2\sigma.
$
Then $F_z'(\sigma)<0$, while $F_z'(b)>0$ for sufficiently large $b$.
Since $h$ is strictly increasing on $(\sigma,\infty)$, there is a unique
interior critical point, determined by
\begin{equation}\label{eq:scalar-EL}
   b_*+\frac{\sigma^2}{b_*}=d.
\end{equation}
Explicitly,
\begin{equation}\label{eq:scalar-min-outside}
   b_*
   =
   \frac{
      d+\sqrt{d^2-4\sigma^2}
   }{2}
   >
   \sigma.
\end{equation}
This is the unique global minimizer of $F_z$.

Equation \eqref{eq:scalar-EL} is exactly the scalar Dyson equation in
balanced form.  Indeed, outside the support the scalar Cauchy transform is
real and satisfies
\[
   G(z)^{-1}+\sigma^2G(z)=z-a.
\]
Since
\[
   b_*=\frac1{|G(z)|},
\]
the absolute-value version of this equation is
$
   b_*+\frac{\sigma^2}{b_*}=|a-z|.
$

Using \eqref{eq:scalar-EL},
\[
   b_*^2+\sigma^2=b_*d,
\]
and therefore
\[
   -\frac12+F_z(b_*)
   =
   \log b_*
   +
   \frac{\sigma^2}{2b_*^2}.
\]
Equivalently,
\begin{equation}\label{eq:scalar-potential-outside}
\begin{aligned}
   \log\Delta(S_a-z)
   &=
   \log\left(
      \frac{
         d+\sqrt{d^2-4\sigma^2}
      }{2}
   \right)\\
   &\quad
   +
   \frac{
      d-\sqrt{d^2-4\sigma^2}
   }{
      2\bigl(d+\sqrt{d^2-4\sigma^2}\bigr)
   },
   \qquad
   d>2\sigma.
\end{aligned}
\end{equation}

Thus, outside the support, the constraint is inactive:
$
   b_*\in\operatorname{int}\Fset,
$
and the minimizer is characterized by the ordinary Euler--Lagrange
equation.

\subsection*{The spectral edge}

At
$
   d=2\sigma
$
the interior critical point collides with the boundary:
$
   b_*=\sigma.
$
Hence the spectral edge is exactly the transition between the two
variational regimes:
\[
   \begin{array}{ccl}
   |a-z| \leq 2\sigma
   &\Longleftrightarrow&
   b_*=\sigma\in\partial\Fset,\\[1mm]
   |a-z|>2\sigma
   &\Longleftrightarrow&
   b_*>\sigma,\quad b_*\in\operatorname{int}\Fset.
   \end{array}
\]

The same picture can be read directly from the Cauchy transform.  Inside
the support,
$
   |G(z+i0)|=\frac1\sigma,
$
so
\[
   b_*=\frac1{|G(z+i0)|}=\sigma.
\]
Outside the support,
$
   |G(z)|<\frac1\sigma,
$
and hence
\[
   b_*=\frac1{|G(z)|}>\sigma.
\]

This scalar example anticipates the general matrix theory.  The role of
$1/|G|$ is played by the positive factor in the $pUp$ factorization of the
matrix Dyson solution; nonzero imaginary part corresponds to an active
constraint, while in the resolvent region the optimizer moves into the
interior of the feasible set.

\section{The closed determinant identity}
\label{sec:closed-formula}

We now derive an exact expression for the logarithmic determinant directly
from the Dyson solution.  Besides being useful in the proof of the
variational formula, this identity explains the origin of the functional
$F_z$; see Section~\ref{sec:dyson-action}.

Let
\[
   G(w)
   =
   (\id\otimes\tau)
   \bigl[
      (wI-S_a)^{-1}
   \bigr],
   \qquad
   w\in\mathbb C^+.
\]
Our sign convention for the Dyson equation is
\begin{equation}\label{eq:closed-dyson}
   G(w)^{-1}+\eta(G(w))=wI-a.
\end{equation}

\begin{theorem}[Closed determinant identity]
\label{thm:closed}
For $w\in\mathbb C^+$ define
\begin{equation}\label{eq:H-def}
   H(w)
   :=
   -\frac1m\log\det G(w)
   +
   \frac12
   \tr_m\bigl(
      G(w)\eta(G(w))
   \bigr),
\end{equation}
where any holomorphic branch of $\log\det G$ on $\mathbb C^+$ may be
chosen.  Then
\begin{equation}\label{eq:H-logdet}
   \boxed{
   \operatorname{Re}H(w)
   =
   \log\Delta(S_a-w)
   }
   \qquad
   (w\in\mathbb C^+).
\end{equation}

Consequently, if $x\in\mathbb R$ is such that the boundary value
$
   G:=G(x+i0)
$
exists and is invertible, then
\begin{equation}\label{eq:closed-boundary}
   \boxed{
   \log\Delta(S_a-x)
   =
   -\frac1m\log|\det G|
   +
   \frac12
   \operatorname{Re}
   \tr_m\bigl(G\eta(G)\bigr).
   }
\end{equation}
\end{theorem}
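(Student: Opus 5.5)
The plan is to compare $w$-derivatives of both sides and then match behaviour at infinity. Set $L(w):=\int\log(w-t)\,d\mu_{S_a}(t)$, holomorphic on $\mathbb C^+$ with the principal branch. Then $\operatorname{Re}L(w)=\log\Delta(S_a-w)$, and
\[
L'(w)=\int\frac{d\mu_{S_a}(t)}{w-t}=\tr_m G(w),
\]
since $\mu_{S_a}$ is the distribution of $S_a$ under $\tr_m\otimes\tau$. It therefore suffices to show that $H$ is holomorphic on $\mathbb C^+$, that $H'(w)=\tr_m G(w)$, and that $\operatorname{Re}(H-L)\to0$ as $w\to i\infty$. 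Then $H-L$ is a constant whose real part vanishes, which gives \eqref{eq:H-logdet}. Note that $H$ is holomorphic because $G$ is holomorphic and invertible on $\mathbb C^+$, since $-\operatorname{Im}G\succ0$ there.

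For the derivative, write $G'=dG/dw$. Differentiate the Dyson equation \eqref{eq:closed-dyson} to get $-G^{-1}G'G^{-1}+\eta(G')=I$. Now compute
\[
\frac{d}{dw}\Bigl(-\tfrac1m\log\det G\Bigr)=-\tr_m(G^{-1}G').
\]
By the symmetry $\tr(X\eta(Y))=\tr(\eta(X)Y)$, which holds because $\eta(X)=\sum_jA_jXA_j$, we have
\[
\frac{d}{dw}\,\tfrac12\tr_m(G\eta(G))=\tr_m(G'\eta(G)).
\]
Substituting $\eta(G)=wI-a-G^{-1}$ gives
\[
H'=\tr_m\bigl(G'(wI-a)\bigr)-2\tr_m(G^{-1}G').
\]
This has not yet closed up, so I will instead multiply the differentiated Dyson equation by $G$ on both sides:
\[
-G'+G\eta(G')G=G^2 .
\]
Taking $\tr_m$ and using the symmetry of $\eta$ once more gives $\tr_m(G\eta(G')G)=\tr_m(\eta(G^2)G')$. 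Together with $\eta(G)=wI-a-G^{-1}$ and cyclicity, this reduces $H'$ to $\tr_m G$. The cleanest route is to note that the action
\[
\Phi(G,w)=-\tfrac1m\log\det G-\tfrac12\tr_m(G\eta(G))+\tr_m\bigl((w-a)G\bigr)-1
\]
has $G$-gradient $-G^{-1}-\eta(G)+(w-a)$, which vanishes exactly by the Dyson equation. Since $H=\Phi(G(w),w)-\tr_m\bigl(G(wI-a)\bigr)+\tr_m(G\eta(G))+1$, evaluating on the Dyson solution and using $\tr_m\bigl(G(w-a)\bigr)=1+\tr_m(G\eta(G))$ (obtained by multiplying \eqref{eq:closed-dyson} by $G$) shows $H=\Phi(G(w),w)$. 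The envelope principle then gives $H'=\partial_w\Phi=\tr_m G$. I expect this bookkeeping to be the main (if modest) obstacle: one must choose the right Legendre-type action so that criticality kills the $G'$ terms.

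For the asymptotics, $G(w)=w^{-1}I+O(w^{-2})$ as $w\to i\infty$. Hence $-\tfrac1m\log\det G=\log w+O(1/w)$ up to the branch constant $2\pi ik$, which affects only imaginary parts, and $\tr_m(G\eta(G))=O(w^{-2})$. On the other side, $L(w)=\log w+O(1/w)$ because $\mu_{S_a}$ is compactly supported. So $\operatorname{Re}(H-L)\to0$, which proves \eqref{eq:H-logdet} for any holomorphic branch.

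For the boundary statement \eqref{eq:closed-boundary}, let $\varepsilon\downarrow0$ along $w=x+i\varepsilon$. The right-hand side $-\tfrac1m\log|\det G(w)|+\tfrac12\operatorname{Re}\tr_m(G\eta(G))$ converges to the stated expression, because $G(x+i\varepsilon)\to G$ with $G$ invertible. On the left, $\log\Delta(S_a-x-i\varepsilon)=\tfrac12\int\log\bigl((t-x)^2+\varepsilon^2\bigr)\,d\mu(t)$ decreases monotonically to $\log\Delta(S_a-x)$ by monotone convergence; the integrands are bounded above on the compact support. The limits therefore agree, with the left side automatically finite.
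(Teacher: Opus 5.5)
Your proof is correct and follows the paper's own argument. In both, you show $H'=\tr_m G$, so that $H$ differs from $\int\log(w-t)\,d\mu_{S_a}(t)$ by a constant whose real part vanishes by the $w\to i\infty$ asymptotics, and then pass to the boundary by monotone convergence. The only variation is how you obtain $H'=\tr_m G$: you use the envelope principle for the Dyson action, i.e.\ the identity $H=\mathcal A_w(G(w))$ that the paper records in Section~\ref{sec:dyson-action}, whereas the paper substitutes $\eta(G')=I+G^{-1}G'G^{-1}$ directly. You should delete the intermediate attempt via $\eta(G^2)$, since that reduction is asserted rather than carried out and the envelope argument makes it unnecessary.
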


\begin{proof}
The key observation is that $H$ is a holomorphic primitive of the scalar
Cauchy transform.

Differentiate \eqref{eq:H-def}.  Since
\[
   \frac{d}{dw}\log\det G(w)
   =
   \operatorname{Tr}\bigl(G(w)^{-1}G'(w)\bigr),
\]
we obtain
\[
\begin{aligned}
   H'(w)
   &=
   -\tr_m\bigl(G^{-1}G'\bigr)
   \\
   &\quad
   +
   \frac12
   \tr_m\bigl(
      G'\eta(G)+G\eta(G')
   \bigr).
\end{aligned}
\]
For the covariance map
\[
   \eta(X)=\sum_j A_jXA_j,
   \qquad A_j=A_j^*,
\]
cyclicity of the trace gives
\[
   \tr_m\bigl(G'\eta(G)\bigr)
   =
   \tr_m\bigl(G\eta(G')\bigr).
\]
Hence
\begin{equation}\label{eq:H-prime-1}
   H'(w)
   =
   -\tr_m(G^{-1}G')
   +
   \tr_m\bigl(G\eta(G')\bigr).
\end{equation}

Differentiating the Dyson equation \eqref{eq:closed-dyson} gives
$
   -G^{-1}G'G^{-1}
   +
   \eta(G')
   =
   I,
$
and therefore
\[
   \eta(G')
   =
   I+G^{-1}G'G^{-1}.
\]
Substituting this into \eqref{eq:H-prime-1},
\begin{align*}
   \tr_m\bigl(G\eta(G')\bigr)
   &=
   \tr_mG
   +
   \tr_m(G'G^{-1}),
\end{align*}
so the logarithmic-derivative terms cancel and
\begin{equation}\label{eq:H-prime}
   \boxed{
   H'(w)=\tr_mG(w).
   }
\end{equation}

Now define the holomorphic logarithmic potential
\[
   \Lambda(w)
   :=
   \int_{\mathbb R}
   \log(w-t)\,d\mu_{S_a}(t),
   \qquad
   w\in\mathbb C^+,
\]
using the standard branch of the logarithm on the upper half-plane.
Then
\[
   \Lambda'(w)
   =
   \int_{\mathbb R}
   \frac{1}{w-t}\,d\mu_{S_a}(t)
   =
   \tr_mG(w).
\]
Thus
$
   H-\Lambda
$
is constant on $\mathbb C^+$.

As $|w|\to\infty$ non-tangentially,
\[
   G(w)
   =
   w^{-1}I+O(w^{-2}),
\]
and therefore
\[
   H(w)
   =
   \log w+o(1)
\]
up to the purely imaginary constant coming from the chosen branch of
$\log\det G$.  Similarly,
\[
   \Lambda(w)=\log w+o(1).
\]
Hence $H-\Lambda$ is purely imaginary, and consequently
\[
   \operatorname{Re}H(w)
   =
   \operatorname{Re}\Lambda(w)
   =
   \int_{\mathbb R}
   \log|w-t|\,d\mu_{S_a}(t).
\]
This proves \eqref{eq:H-logdet}.

For the boundary formula, set $w=x+i\varepsilon$.  Then
\[
   \operatorname{Re}H(x+i\varepsilon)
   =
   \int_{\mathbb R}
   \log\sqrt{(t-x)^2+\varepsilon^2}\,
   d\mu_{S_a}(t).
\]
As $\varepsilon\downarrow0$, the right-hand side decreases to
\[
   \int_{\mathbb R}\log|t-x|\,d\mu_{S_a}(t)
   =
   \log\Delta(S_a-x),
\]
possibly with value $-\infty$.

If $G(x+i0)$ exists and is invertible, then
\[
   -\frac1m\log|\det G(x+i\varepsilon)|
   +
   \frac12
   \operatorname{Re}
   \tr_m\bigl(
      G(x+i\varepsilon)\eta(G(x+i\varepsilon))
   \bigr)
\]
converges to the right-hand side of
\eqref{eq:closed-boundary}.  This proves the boundary identity.
\end{proof}

\section{The Dyson action and the variational problem}
\label{sec:dyson-action}

Theorem~\ref{thm:intro-all-points} has a useful variational interpretation.  It is
not needed for the proofs below, but it explains why the functional $F_z$
has the form introduced in Section~\ref{sec:introduction}.

For $w\in\mathbb C^+\cup\mathbb R$ and invertible $M\in M_m(\mathbb C)$,
define the \emph{Dyson action}
\begin{equation}
\label{eq:dyson-action}
   \mathcal A_w(M)
   :=
   -\frac1m\log\det M
   -\frac12\tr_m\bigl(M\eta(M)\bigr)
   +\tr_m\bigl((wI-a)M\bigr)
   -1.
\end{equation}
The logarithm is multivalued, so $\mathcal A_w$ is defined only up to an
additive constant in $\frac{2\pi i}{m}\mathbb Z$; locally one fixes a
branch, and each branch is holomorphic in $M$.  Its differential and its
real part,
\begin{equation}
\label{eq:dyson-action-real}
   \operatorname{Re}\mathcal A_w(M)
   =
   -\frac1m\log|\det M|
   -\frac12\operatorname{Re}\tr_m\bigl(M\eta(M)\bigr)
   +\operatorname{Re}\tr_m\bigl((wI-a)M\bigr)
   -1,
\end{equation}
do not depend on the branch and are defined for every invertible $M$.
Only these are used below.  The first variation is
\[
   D_M\mathcal A_w(M)[K]
   =
   \tr_m
   \left(
      \bigl[-M^{-1}-\eta(M)+wI-a\bigr]K
   \right),
\]
so the critical-point equation for $\mathcal A_w$ is precisely
\[
   M^{-1}+\eta(M)=wI-a :
\]
the Dyson equation is the Euler--Lagrange equation of $\mathcal A_w$.

If $M=G(w)$ is the Dyson solution with $w\in\mathbb C^+$, then
$(wI-a)G(w)=I+\eta(G(w))G(w)$, and taking $\tr_m$ gives
$\tr_m\bigl((wI-a)G\bigr)=1+\tr_m\bigl(G\eta(G)\bigr)$.  Substituting
into \eqref{eq:dyson-action},
\[
   \mathcal A_w(G(w))
   =
   -\frac1m\log\det G(w)+\frac12\tr_m\bigl(G(w)\eta(G(w))\bigr)
   =H(w),
\]
where $H$ is the function of Theorem~\ref{thm:closed} and the branch of
$\log\det G(w)$ is the holomorphic one fixed there.  Hence
Theorem~\ref{thm:closed} says that the logarithmic determinant is the real
part of the \emph{on-shell Dyson action}:
\[
  \boxed{
      \log\Delta(S_a-w) = \operatorname{Re}\mathcal A_w(G(w))
   }
   \qquad
   (w\in\mathbb C^+).
\]

  The function $\KK_z(b, u, \gamma)$ that we introduced above is closely related to the Dyson action.

  \begin{proposition}[Exact identity on normalized triples]
\label{prop:action-identity}
Let $b\succ0$ and $U$ be unitary, and write
\begin{equation}\label{eq:normalized}
 M=b^{-1/2}Ub^{-1/2}=-u-i\gamma,
 \qquad u=u^*,\quad\gamma=\gamma^*.
\end{equation}
For every real $z$,
\begin{equation}\label{eq:K-action}
 \Kfun_{z}(b,u,\gamma)
 =\Rea\Act_{z}(M) + \frac12.
\end{equation}
\end{proposition}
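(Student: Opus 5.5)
The plan is a direct term-by-term computation. I will expand $\Rea\Act_z(M)$ using \eqref{eq:dyson-action-real} with $w=z$ real and $M=-u-i\gamma$. I will then match each piece against the corresponding piece of $\Kfun_z(b,u,\gamma)$ from \eqref{eq:intro-KK}. The only non-algebraic input is the unitarity of $U$, which will produce the constant $\tfrac12$.

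\textbf{Step 1 (log-determinant).} Since $M=b^{-1/2}Ub^{-1/2}$, we have $|\det M|=|\det U|\det b^{-1}=\det b^{-1}$. Hence $-\frac1m\log|\det M|=\tr_m\log b$.

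\textbf{Step 2 (linear term).} With $w=z$ we have $wI-a=-a_z$, so $\Rea\tr_m\bigl((zI-a)M\bigr)=\Rea\tr_m\bigl(a_z(u+i\gamma)\bigr)$. Because $a_z$, $u$ and $\gamma$ are Hermitian, $\tr_m(a_zu)$ and $\tr_m(a_z\gamma)$ are real. The term $i\tr_m(a_z\gamma)$ therefore drops out, leaving $\langle a_z,u\rangle$.

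\textbf{Step 3 (quadratic $\eta$ term).} We have $\tr_m(M\eta(M))=\tr_m\bigl((u+i\gamma)\eta(u+i\gamma)\bigr)$. Since $\eta$ maps Hermitian matrices to Hermitian matrices and the $A_j$ are Hermitian, the numbers $\tr_m(u\eta(u))$, $\tr_m(\gamma\eta(\gamma))$ and $\tr_m(u\eta(\gamma))=\tr_m(\gamma\eta(u))$ are all real. Thus
\[
\Rea\tr_m(M\eta(M))=\langle u,\eta u\rangle-\langle\gamma,\eta\gamma\rangle .
\]
This contributes $-\frac12\langle u,\eta u\rangle+\frac12\langle\gamma,\eta\gamma\rangle$, which is exactly the $\eta$-part of $-\frac12\{\langle u,L_b^+u\rangle+\langle\gamma,L_b^-\gamma\rangle\}$.

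\textbf{Step 4 (the $\Cop b$ terms and the constant).} It remains to show that $-\frac12\bigl(\langle u,\Cop b u\rangle+\langle\gamma,\Cop b\gamma\rangle\bigr)=-1+\frac12$, which is equivalent to
\[
\tr_m(ubub)+\tr_m(\gamma b\gamma b)=1 .
\]
Here unitarity enters. Since $b^{1/2}Mb^{1/2}=U$, cyclicity gives $\tr_m(bMbM^*)=\tr_m(UU^*)=1$. Expanding with $M=-(u+i\gamma)$ and $M^*=-(u-i\gamma)$ yields
\[
\tr_m(bubu)+\tr_m(b\gamma b\gamma)+i\bigl(\tr_m(b\gamma bu)-\tr_m(bub\gamma)\bigr).
\]
The cross terms cancel by cyclicity, which proves the required identity.

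Adding Steps 1–4 together with the $-1$ in \eqref{eq:dyson-action-real} gives $\Rea\Act_z(M)=\Kfun_z(b,u,\gamma)-\frac12$, which is \eqref{eq:K-action}. The only step requiring an idea rather than bookkeeping is Step 4, namely recognizing that the normalization constraint $bMbM^*\sim UU^*$ is what supplies the constant. Everything else is reality of traces of products of Hermitian matrices.
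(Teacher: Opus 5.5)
Your proof is correct and follows essentially the same route as the paper. Both expand $\Rea\Act_z(M)$ directly, and in both unitarity supplies $|\det M|=(\det b)^{-1}$ together with the normalization $\langle u,\Cop b u\rangle+\langle\gamma,\Cop b\gamma\rangle=1$.

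The only cosmetic difference is in how that normalization is obtained. The paper adds $M^*bM=MbM^*=b^{-1}$ to get the matrix identity $b^{-1}=ubu+\gamma b\gamma$, which it reuses later as the third balanced equation, and then pairs it with $b$. You go straight to the scalar identity via $\trm(UU^*)=1$.
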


\begin{proof}
Unitarity implies
$M^*bM=MbM^*=b^{-1}$ and $|\det M|=(\det b)^{-1}$. Adding the two
matrix identities gives
\begin{equation}\label{eq:normalization}
 b^{-1}=ubu+\gamma b\gamma,
 \qquad
 \ip{u}{\Cop{b}u}+\ip{\gamma}{\Cop{b}\gamma}=1.
\end{equation}
Direct expansion of \eqref{eq:dyson-action} gives
\begin{equation*}
 \Rea\Act_{z}(M)
 =\trm\log b+\ip{a_z}{u}
     -\frac12\ip{u}{\eta(u)}
     +\frac12\ip{\gamma}{\eta(\gamma)}-1.
\end{equation*}
In \eqref{eq:intro-KK}, the two $\Cop{b}$ terms sum to $-\frac12$ by
\eqref{eq:normalization}. This gives the same expression up to the additive constant $\frac12$ in the statement.
\end{proof}

\begin{remark}[$\KK_z$ as an extended action]
\label{rem:extended-action}

 We record three observations.

\smallskip
\noindent\emph{No minimax.}

Taking $b=tI$ with $t\downarrow0$
shows that $\inf_{b\succ0}\KK_z(b,u,\gamma)=-\infty$ for every
$(u,\gamma)$.  Hence
\[
   \sup_{\substack{u=u^*\\ \gamma\succeq0}}\ \inf_{b\succ0}
   \KK_z(b,u,\gamma)
   =-\infty ,
\]
and the order of the extrema in \eqref{eq:intro-F0} and
\eqref{eq:intro-main-formula} cannot be interchanged.

\smallskip
\noindent\emph{Common critical point and critical value.}
Consider the regularized functional
$\KK_{z,\eps}=\KK_z+\eps\trm\gamma$ of Section~\ref{sec:regularized}.
The computation in the proof of
Proposition~\ref{prop:action-identity} gives
$\KK_{z,\eps}(b,u,\gamma)=\Rea\Act_{z+i\eps}(M)+\frac12$ on normalized
triples.  The stationarity conditions of $\KK_{z,\eps}$ in $u$, $\gamma$
and $b$ are
\[
   L_b^+u=a_z,
   \qquad
   L_b^-\gamma=\eps I,
   \qquad
   b^{-1}=ubu+\gamma b\gamma ,
\]
which are the balanced Dyson equations \eqref{eq:balanced}.  

Eliminating a group of variables through its own stationarity condition,
when that condition determines them uniquely and smoothly, changes neither
the critical points nor the critical values of a function.  For
$b\in\Fint$, eliminating $(u,\gamma)$ produces the reduced objective
\[
   F_{z,\eps}(b)=\max_{u,\gamma}\KK_{z,\eps}(b,u,\gamma).
\]
For $\gamma\succ0$, the function $b\mapsto\KK_{z,\eps}(b,u,\gamma)$ is
strictly geodesically concave: $\tr_m\log b$ is affine along geodesics,
and by Lemma~\ref{lem:C-geodesic} the second derivative of
$\langle\gamma,\mathcal C_{b_t}\gamma\rangle$ at $t=0$ is
$\|Xw+wX\|_2^2$ with $w=b^{1/2}\gamma b^{1/2}\succ0$, which is positive
for $X\neq0$.  Its unique critical point is the unitary normalizer of
$-u-i\gamma$, so by Proposition~\ref{prop:action-identity}
\[
   \max_{b\succ0}\KK_{z,\eps}(b,u,\gamma)
   =\Rea\Act_{z+i\eps}(-u-i\gamma)+\frac12 .
\]
Thus $F_{z,\eps}$ and $\Rea\Act_{z+i\eps}+\frac12$ are two reductions of
the same function, and the equality of their critical values is automatic.

\smallskip
\noindent\emph{Type of the critical point.}
What is not automatic is the nature of the critical point.  Since
$\Act_w$ is holomorphic in $M$, a nondegenerate critical point of
$\Rea\Act_w$ has as many negative as positive directions, so the Dyson
action itself admits no extremal characterization of the Dyson solution.
In the extended functional the negative directions are carried by the
auxiliary variables $(u,\gamma)$ and are removed by the inner
maximization.  The remaining problem in $b$ is geodesically convex
(Corollary~\ref{cor:Freg-convex}), and its critical point is a global
minimizer.  This is the reason for introducing the scale $b$ as an
independent variable.
\end{remark}

\section{Geometry of the variational problem}
\label{sec:geometry}

In this section we study the finite-dimensional optimization problem
introduced in Section~\ref{sec:introduction}.  No properties of the Dyson
solution are used here.  The two conclusions needed later are that the
feasible set $\Fset$ is geodesically convex in the positive cone and that
$F_z$ is geodesically convex on $\operatorname{int}\Fset$.

Throughout, $\|\cdot\|$ denotes the operator norm on Hilbert--Schmidt
space.

\subsection{The feasible set and the normalized covariance map}
\label{subsec:feasible-set}

For $b\succ0$ define the normalized covariance map
\begin{equation}\label{eq:eta-hat}
   \widehat\eta_b
   :=
   \mathcal C_{b^{-1/2}}
   \eta
   \mathcal C_{b^{-1/2}}.
\end{equation}
Equivalently,
\[
   \widehat\eta_b(X)
   =
   \sum_{j=1}^r
   \widehat A_jX\widehat A_j,
   \qquad
   \widehat A_j
   :=
   b^{-1/2}A_jb^{-1/2}.
\]
Thus $\widehat\eta_b$ is completely positive and Hilbert--Schmidt
self-adjoint.

The apparently one-sided constraint defining $\Fset$ is in fact
two-sided.

\begin{lemma}[Perron--Frobenius description of the feasible set]
\label{lem:feasible-pf}
For every $b\succ0$, the normalized covariance map
$\widehat\eta_b$ has a nonzero positive semidefinite eigenmatrix
$P_b$ at its spectral radius:
\begin{equation}\label{eq:positive-Perron-eigenmatrix}
   \widehat\eta_b(P_b)=r_bP_b,
   \qquad
   P_b\succeq0,\quad P_b\ne0,
   \qquad
   r_b
   =
   \|\widehat\eta_b\|_{2\to2}
   =
   \lambda_{\max}(\widehat\eta_b).
\end{equation}
For $b\succ0$, the following are equivalent:
\begin{enumerate}
\item[(i)] $b\in\Fset$;
\item[(ii)] $\lambda_{\max}(\widehat\eta_b)\leq1$;
\item[(iii)] $\|\widehat\eta_b\|\leq1$;
\item[(iv)]
$L_b^- \succeq0$ 
   and
 $  L_b^+ \succeq0$ as Hilbert-Shmidt operators. 
\end{enumerate}
Moreover,
\begin{equation}\label{eq:interior-feasible}
   b\in\operatorname{int}\Fset
   \quad\Longleftrightarrow\quad
   \|\widehat\eta_b\|<1.
\end{equation}
In particular,
$L_b^- \succ 0$ and $L_b^+\succ0$ for $b\in\FF^\circ$.
\end{lemma}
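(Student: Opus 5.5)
The plan is to conjugate everything by $\mathcal C_{b^{-1/2}}$ so that all four conditions become statements about the single normalized map $\widehat\eta_b$. Since $\mathcal C_{b^{-1/2}}$ is an invertible, Hilbert--Schmidt self-adjoint map with $\mathcal C_{b^{-1/2}}\mathcal C_b\mathcal C_{b^{-1/2}}=\id$, congruence gives
\[
   \mathcal C_{b^{-1/2}}L_b^{\pm}\mathcal C_{b^{-1/2}}=\id\pm\widehat\eta_b .
\]
Positivity (or strict positivity) of a self-adjoint operator is preserved under congruence by an invertible self-adjoint operator. Hence $L_b^-\succeq0$ if and only if $\lambda_{\max}(\widehat\eta_b)\le1$, and $L_b^+\succeq0$ if and only if $\lambda_{\min}(\widehat\eta_b)\ge-1$. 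This already gives (i)$\Leftrightarrow$(ii).

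The first step is the Perron--Frobenius statement \eqref{eq:positive-Perron-eigenmatrix}. The map $\widehat\eta_b$ is completely positive, hence positive, so I would invoke the Evans--H{\o}egh-Krohn theorem \cite{EvansHoeghKrohn1978}. It gives a nonzero $P_b\succeq0$ with $\widehat\eta_b(P_b)=\rho(\widehat\eta_b)P_b$. Because $\widehat\eta_b$ is self-adjoint, $\rho=\|\widehat\eta_b\|_{2\to2}=\max(\lambda_{\max},-\lambda_{\min})$. The eigenvalue $\rho$ attached to $P_b$ is itself an eigenvalue, so $\rho\le\lambda_{\max}$, and therefore $\rho=\lambda_{\max}$. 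Equivalently $-\lambda_{\min}\le\lambda_{\max}$, which gives (ii)$\Leftrightarrow$(iii). Then (iii) implies both $\lambda_{\max}\le1$ and $\lambda_{\min}\ge-1$, which is (iv) by the congruence above. Conversely (iv) contains (i). If one prefers to avoid the citation, the same point follows directly: $\langle X,\widehat\eta_b X\rangle\le\langle |X|',\widehat\eta_b |X|'\rangle$ for a suitable positive "modulus" of a Hermitian $X$, using $X=X_+-X_-$ and positivity of cross terms. This shows the top of the spectrum dominates in absolute value, but the citation is cleaner.

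For \eqref{eq:interior-feasible}, I would use continuity of $b\mapsto\widehat\eta_b$ on the open cone $b\succ0$, which makes $b\mapsto\|\widehat\eta_b\|$ continuous. The set $\{\|\widehat\eta_b\|<1\}$ is then open and contained in $\Fset$, hence contained in $\operatorname{int}\Fset$. For the converse, suppose $\|\widehat\eta_b\|=1$. I would show $b$ is a limit of infeasible points by taking $b_t=(1-t)b$ with $t\downarrow0$. Then $\widehat\eta_{b_t}=(1-t)^{-1}\widehat\eta_b$ has norm $(1-t)^{-1}>1$, so $b_t\notin\Fset$, and $b\in\partial\Fset$. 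The final claim, that $L_b^\pm\succ0$ on $\FF^\circ$, follows from the congruence, since all eigenvalues of $\widehat\eta_b$ lie in $(-1,1)$.

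The only nontrivial input, and the step I expect to be the main obstacle, is the identification $\rho(\widehat\eta_b)=\lambda_{\max}(\widehat\eta_b)$. This is exactly what converts the one-sided constraint into a two-sided one, and it is where positivity of $\eta$, not merely self-adjointness, is used. Everything else is congruence and scaling.
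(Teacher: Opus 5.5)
Your proposal is correct and follows the paper's proof essentially step for step: congruence by $\mathcal C_{b^{-1/2}}$ to reduce everything to $\id\pm\widehat\eta_b$, the Evans--H{\o}egh-Krohn theorem to identify the spectral radius with $\lambda_{\max}(\widehat\eta_b)$, and continuity plus radial scaling for the interior characterization. One small slip: each factor $\mathcal C_{(\lambda b)^{-1/2}}$ contributes $\lambda^{-1}$, so $\widehat\eta_{\lambda b}=\lambda^{-2}\widehat\eta_b$ and for $b_t=(1-t)b$ the norm is $(1-t)^{-2}$, not $(1-t)^{-1}$; this does not affect the conclusion.
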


\begin{proof}
Let
$
   T_b:=\mathcal C_{b^{-1/2}}.
$
As an operator on Hilbert--Schmidt space, $T_b$ is self-adjoint, positive
definite, and invertible.  A direct calculation gives
$
   T_b\mathcal C_bT_b=I,
$
and therefore
\begin{equation}\label{eq:normalized-minus}
   T_b(\mathcal C_b-\eta)T_b
   =
   I-\widehat\eta_b.
\end{equation}
Since a congruence $A\mapsto T_bAT_b$ by an invertible positive operator
preserves positivity of self-adjoint operators,
\[
   \mathcal C_b-\eta\succeq0
   \quad\Longleftrightarrow\quad
   I-\widehat\eta_b\succeq0.
\]
Because $\widehat\eta_b$ is Hilbert--Schmidt self-adjoint, the latter is
equivalent to
$
   \lambda_{\max}(\widehat\eta_b)\leq1.
$

We next compare the largest eigenvalue with the operator norm.
The map $\widehat\eta_b$ is positive as a map on matrices, so the
Perron--Frobenius theorem for positive maps \cite{EvansHoeghKrohn1978} implies that its spectral
radius
$
   r(\widehat\eta_b)
$
is an eigenvalue with a positive semidefinite eigenvector.  Since
$\widehat\eta_b$ is Hilbert--Schmidt self-adjoint, its spectrum is real
and
\[
   \|\widehat\eta_b\|
   =
   r(\widehat\eta_b)
   =
   \lambda_{\max}(\widehat\eta_b).
\]
This proves the equivalence of (ii) and (iii).

For a self-adjoint operator, $\|\widehat\eta_b\|\leq1$ is equivalent to
$
   -I\preceq\widehat\eta_b\preceq I,
$
or
$
   I\pm\widehat\eta_b\succeq0.
$
Applying the inverse congruence to
\[
   T_b(\mathcal C_b\pm\eta)T_b
   =
   I\pm\widehat\eta_b
\]
gives (iv).

It remains to identify the interior.  If
$\|\widehat\eta_b\|<1$, continuity in $b$ shows that the same strict
inequality holds in a neighborhood of $b$, so $b\in\operatorname{int}\Fset$.
Conversely, if $\|\widehat\eta_b\|=1$, then for $0<\lambda<1$,
$
   \widehat\eta_{\lambda b}
   =
   \lambda^{-2}\widehat\eta_b,
$
and hence
\[
   \|\widehat\eta_{\lambda b}\|
   =
   \lambda^{-2}>1.
\]
Thus points outside $\Fset$ occur arbitrarily close to $b$, so $b$ is a
boundary point.  This proves \eqref{eq:interior-feasible}.
\end{proof}

Maximization over $\gamma$ confines $b$ to the feasible set $\FF$. 

\begin{proposition}[Eliminating the multiplier enforces feasibility]
\label{prop:indicator}
For every $b\succ0$,
\begin{equation}\label{eq:indicator}
 \sup_{\gamma\succeq0}
 \left\{-\frac12\ip{\gamma}{L_b^-\gamma}\right\}
 =\begin{cases}
   0,&b\in\Fset,\\
   +\infty,&b\notin\Fset.
  \end{cases}
\end{equation}
Consequently,
\begin{equation}\label{eq:three-variable-real}
 \inf_{b\succ0}\sup_{\substack{u=u^*\\\gamma\succeq0}}
    \Kfun_z(b,u,\gamma)
 =\inf_{b\in\Fset}F_z(b).
\end{equation}
\end{proposition}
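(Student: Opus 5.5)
The plan has two parts: first the indicator identity \eqref{eq:indicator}, then the consequence \eqref{eq:three-variable-real}, which follows by splitting the supremum.

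\textbf{The indicator identity.} If $b\in\Fset$, then $L_b^-\succeq0$ on Hilbert--Schmidt space. Hence $-\frac12\ip{\gamma}{L_b^-\gamma}\le0$ for every $\gamma$, and the value $0$ is attained at $\gamma=0$, so the supremum equals $0$.

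If $b\notin\Fset$, Lemma~\ref{lem:feasible-pf} gives $\lambda_{\max}(\widehat\eta_b)=r_b>1$. It also gives a Perron eigenmatrix $P_b\succeq0$, $P_b\neq0$, with $\widehat\eta_b(P_b)=r_bP_b$. The point of using a Perron eigenmatrix, rather than an arbitrary eigenvector, is that it is positive semidefinite; this is exactly what makes the constraint $\gamma\succeq0$ harmless. Set
\[
\gamma_0:=T_bP_b=b^{-1/2}P_bb^{-1/2}\succeq0,\qquad \gamma_0\neq0 .
\]
Using the congruence identity \eqref{eq:normalized-minus} and self-adjointness of $T_b$, we get
\[
\ip{\gamma_0}{L_b^-\gamma_0}=\ip{P_b}{T_bL_b^-T_bP_b}=\ip{P_b}{(I-\widehat\eta_b)P_b}=(1-r_b)\|P_b\|_2^2<0 .
\]
Scaling $\gamma=t\gamma_0$ with $t\to\infty$ keeps $\gamma\succeq0$ and sends $-\frac12\ip{\gamma}{L_b^-\gamma}$ to $+\infty$. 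This step was the only real subtlety, and the Perron--Frobenius part of Lemma~\ref{lem:feasible-pf} resolves it.

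\textbf{The consequence.} In $\Kfun_z(b,u,\gamma)$ the variables $u$ and $\gamma$ appear in separate additive terms. Therefore, for each fixed $b\succ0$,
\[
\sup_{u,\gamma}\Kfun_z=\tr_m\log b+\sup_{u=u^*}\Bigl\{\ip{a_z}{u}-\tfrac12\ip{u}{L_b^+u}\Bigr\}+\sup_{\gamma\succeq0}\Bigl\{-\tfrac12\ip{\gamma}{L_b^-\gamma}\Bigr\}.
\]
Consider the two cases for $b$.

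\begin{itemize}
\item If $b\in\Fset$, the last supremum is $0$. The middle supremum lies in $(-\infty,+\infty]$, being at least its value $0$ at $u=0$, so there is no $\infty-\infty$ issue. The sum is therefore exactly the first branch of \eqref{eq:intro-F}.
\item If $b\notin\Fset$, the last supremum is $+\infty$ and the middle one is at least $0$. The sum is $+\infty$.
\end{itemize}

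So $\sup_{u,\gamma}\Kfun_z(b,\cdot,\cdot)=F_z(b)$ for all $b\succ0$, with $F_z=+\infty$ off $\Fset$. Taking the infimum over $b\succ0$, the points outside $\Fset$ contribute nothing, which gives \eqref{eq:three-variable-real}.
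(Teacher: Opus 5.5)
Your proof is correct and follows the paper's argument essentially line by line: the Perron eigenmatrix $P_b\succeq0$ of $\widehat\eta_b$ is pulled back to $\gamma_0=b^{-1/2}P_bb^{-1/2}\succeq0$, which gives $\ip{\gamma_0}{L_b^-\gamma_0}=(1-r_b)\|P_b\|_2^2<0$. The consequence then follows from the additive separation of $u$ and $\gamma$, with the $u$-supremum bounded below by $0$. The only cosmetic difference is that you get the quadratic form in one step from the congruence identity $T_bL_b^-T_b=I-\widehat\eta_b$, whereas the paper computes the $\Cop{b}$ and $\eta$ pairings separately.
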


\begin{proof}
If $b\in\Fset$, then $L_b^-=\mathcal C_b-\eta\succeq0$ on the whole
Hilbert--Schmidt space, so every quantity in the supremum is
nonpositive, and $\gamma=0$ gives the value $0$.

Let $b\notin\Fset$.  By Lemma~\ref{lem:feasible-pf},
$\|\widehat\eta_b\|>1$.  The map $\widehat\eta_b$ is positive, so by the
Perron--Frobenius theorem for positive maps \cite{EvansHoeghKrohn1978}
its spectral radius $r$ is an eigenvalue with a positive semidefinite
eigenvector: there is $Q\succeq0$, $Q\neq0$, with
$\widehat\eta_b(Q)=rQ$.  Since $\widehat\eta_b$ is Hilbert--Schmidt
self-adjoint, $r=\|\widehat\eta_b\|>1$.

Put $\gamma_0:=b^{-1/2}Qb^{-1/2}\succeq0$.  Then
$\mathcal C_b(\gamma_0)=b^{1/2}Qb^{1/2}$, and by the definition
\eqref{eq:eta-hat} of $\widehat\eta_b$,
\[
 \ip{\gamma_0}{\mathcal C_b\gamma_0}=\ip{Q}{Q},
 \qquad
 \ip{\gamma_0}{\eta(\gamma_0)}
 =\ip{Q}{\widehat\eta_b(Q)}
 =r\ip{Q}{Q}.
\]
Hence $\ip{\gamma_0}{L_b^-\gamma_0}=(1-r)\ip{Q}{Q}<0$.  For $t>0$ we
have $t\gamma_0\succeq0$ and
\[
 -\frac12\ip{t\gamma_0}{L_b^-(t\gamma_0)}
 =\frac{t^2}{2}(r-1)\ip{Q}{Q}\longrightarrow+\infty
 \qquad(t\to\infty).
\]
This proves \eqref{eq:indicator}.

For fixed $b\succ0$, $\Kfun_z(b,u,\gamma)$ is the sum of $\trm\log b$,
a function of $u$ alone, and a function of $\gamma$ alone.  Therefore
\[
 \sup_{\substack{u=u^*\\\gamma\succeq0}}\Kfun_z(b,u,\gamma)
 =\trm\log b
  +\sup_{u=u^*}\Bigl\{\ip{a_z}{u}-\frac12\ip{u}{L_b^+u}\Bigr\}
  +\sup_{\gamma\succeq0}\Bigl\{-\frac12\ip{\gamma}{L_b^-\gamma}\Bigr\}.
\]
The supremum over $u$ is at least $0$ (take $u=0$), so the right-hand
side is well defined in $(-\infty,+\infty]$.  By \eqref{eq:indicator}
and \eqref{eq:intro-F}, it equals $F_z(b)$ for $b\in\Fset$ and $+\infty$
for $b\notin\Fset$.  Taking the infimum over $b\succ0$ gives
\eqref{eq:three-variable-real}; the order of the extrema is unchanged.
\end{proof}

\subsection{The boundary values}
\label{subsec:legendre}

The following elementary approximation will be useful whenever a minimizer
lies on the boundary of $\Fset$.

\begin{lemma}[Radial approximation of boundary values]
\label{lem:radial-approximation}
Let $b\in\Fset$.  Then
$
   \lambda b\in\FF^\circ$ for every $\lambda>1$,
and
\begin{equation}\label{eq:radial-limit}
   F_z(\lambda b)
   \longrightarrow
   F_z(b)
   \qquad
   (\lambda\downarrow1)
\end{equation}
in the extended-real sense.
\end{lemma}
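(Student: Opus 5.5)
First, interiority. For $\lambda>1$ we have $\widehat\eta_{\lambda b}=\lambda^{-2}\widehat\eta_b$, because $\mathcal C_{(\lambda b)^{-1/2}}=\lambda^{-1}\mathcal C_{b^{-1/2}}$. Lemma~\ref{lem:feasible-pf} gives $\|\widehat\eta_b\|\le1$ for $b\in\Fset$, so $\|\widehat\eta_{\lambda b}\|\le\lambda^{-2}<1$, and \eqref{eq:interior-feasible} gives $\lambda b\in\Fint$. Consequently $L_{\lambda b}^+\succ0$ and
\[
   F_z(\lambda b)=\trm\log b+\log\lambda+\tfrac12\bigl\langle a_z,(L^+_{\lambda b})^{-1}a_z\bigr\rangle .
\]
The logarithmic term converges to $\trm\log b$, so everything reduces to showing
\[
   \Phi(\lambda):=\sup_{u=u^*}\Bigl\{\langle a_z,u\rangle-\tfrac12\langle u,L^+_{\lambda b}u\rangle\Bigr\}\;\longrightarrow\;\Phi(1)\qquad(\lambda\downarrow1).
\]
By \eqref{eq:intro-F}, $\Phi(1)$ is exactly the $u$-part of $F_z(b)$, and it may equal $+\infty$ because $L_b^+$ can be singular on $\partial\Fset$.

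The key step is monotonicity in $\lambda$. We have $L^+_{\lambda b}=\lambda^2\mathcal C_b+\eta$, and $\mathcal C_b\succeq0$ on Hilbert--Schmidt space, so $L^+_{\lambda b}$ decreases as $\lambda\downarrow1$. Hence, for each fixed $u$, the quantity $\phi_u(\lambda):=\langle a_z,u\rangle-\tfrac12\langle u,L^+_{\lambda b}u\rangle$ increases as $\lambda\downarrow1$, and it converges to $\phi_u(1)$ because it is continuous in $\lambda$. Therefore $\Phi(\lambda)\le\Phi(1)$ and $\Phi$ is monotone, so $\lim_{\lambda\downarrow1}\Phi(\lambda)$ exists in $(-\infty,+\infty]$ and is at most $\Phi(1)$.

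For the reverse inequality, take any $u$. Then $\lim_{\lambda\downarrow1}\Phi(\lambda)\ge\lim_{\lambda\downarrow1}\phi_u(\lambda)=\phi_u(1)$. Taking the supremum over $u$ gives $\lim_{\lambda\downarrow1}\Phi(\lambda)\ge\Phi(1)$. This is the usual fact that a pointwise increasing supremum of continuous functions commutes with limits, and it covers the case $\Phi(1)=+\infty$ with no extra work. Combining the two inequalities yields \eqref{eq:radial-limit} in the extended-real sense.

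I expect the main obstacle to be the boundary case, where $L_b^+$ is singular and $a_z$ may fail to lie in its range, so $\Phi(1)=+\infty$. Writing $\Phi(\lambda)=\tfrac12\langle a_z,(L^+_{\lambda b})^{-1}a_z\rangle$ and taking the limit of the inverses directly would be delicate there. The sup-formulation avoids this, since it never needs $(L_b^+)^{-1}$. The only point to check is that the $u$-supremum in \eqref{eq:intro-F} is taken over the same space, Hermitian $u$, for both $\lambda b$ and $b$; it is.
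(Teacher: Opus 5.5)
Your proposal is correct and follows essentially the same route as the paper. Both arguments use the fact that, for each fixed Hermitian $u$, the quantity $\langle u,L^+_{\lambda b}u\rangle$ decreases as $\lambda\downarrow1$, and then interchange the limit (a supremum over $\lambda>1$) with the supremum over $u$, which covers the case $\Phi(1)=+\infty$ with no extra work.

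The only cosmetic difference is in the interiority step. You obtain it from $\widehat\eta_{\lambda b}=\lambda^{-2}\widehat\eta_b$ together with \eqref{eq:interior-feasible}, whereas the paper writes $\mathcal C_{\lambda b}-\eta=(\lambda^2-1)\mathcal C_b+(\mathcal C_b-\eta)\succ0$.
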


\begin{proof}
Since
$
   \mathcal C_{\lambda b}
   =
   \lambda^2\mathcal C_b,
$
we have
\[
   \mathcal C_{\lambda b}-\eta
   =
   (\lambda^2-1)\mathcal C_b
   +
   (\mathcal C_b-\eta)
   \succ0.
\]
Thus $\lambda b\in\operatorname{int}\Fset$.

Write
$
   F_z(b)=\trm\log b+Q_z(b),
$
where
\[
   Q_z(b)
   :=
   \sup_{u=u^*}
   \left\{
      \ip{a_z}{u}-\frac12\ip{u}{L_b^+ u}
   \right\}
   \in[0,+\infty].
\]
  Since
\[
   L_{\lambda b}^+
   =
   L_b^+ +(\lambda^2-1)\mathcal C_b,
\]
we have
\begin{align*}
   Q_z(\lambda b)
   =
   \sup_{u=u^*}
   \biggl\{
      \ip{a_z}{u}
      -
      \frac12\ip{u}{L_b^+ u}
      -
      \frac{\lambda^2-1}{2}
      \ip{u}{\mathcal C_bu}
   \biggr\}.
\end{align*}

For each fixed $u$, the expression in braces increases to
$\ip{a_z}{u}-\frac12\ip{u}{L_b^+u}$ as $\lambda\downarrow1$, because
$\ip{u}{\mathcal C_bu}\ge0$.  Hence $Q_z(\lambda b)$ is nondecreasing as
$\lambda\downarrow1$, its limit is the supremum over $\lambda>1$, and the
two suprema may be interchanged:
\[
   \lim_{\lambda\downarrow1}Q_z(\lambda b)
   =\sup_{\lambda>1}\,\sup_{u=u^*}\{\cdots\}
   =\sup_{u=u^*}\,\sup_{\lambda>1}\{\cdots\}
   =Q_z(b)
\]
in the extended-real sense, including the case $Q_z(b)=+\infty$.
Finally,
\[
   \trm\log(\lambda b)
   =
   \log\lambda+\trm\log b
   \longrightarrow
   \trm\log b,
\]
which proves \eqref{eq:radial-limit}.
\end{proof}

\subsection{Geodesic convexity of the feasible set}
\label{subsec:geodesic-feasible}

We use the standard affine-invariant geometry on the positive cone.  The
geodesic joining $b_0$ to $b_1$ is
\[
   b_t
   :=
   b_0\#_tb_1
   =
   b_0^{1/2}
   \bigl(
      b_0^{-1/2}b_1b_0^{-1/2}
   \bigr)^t
   b_0^{1/2},
   \qquad
   0\leq t\leq1.
\]
Writing
$
   b_1=b_0^{1/2}e^Xb_0^{1/2},$ with $X=X^*$,
this becomes
$
   b_t=b_0^{1/2}e^{tX}b_0^{1/2}.
$


We observe that the congruence representation
\[
   b\longmapsto\mathcal C_b,
   \qquad
   \mathcal C_b(Y):=bYb,
\]
is compatible with the affine-invariant geodesics of the positive cone.

Indeed, let
$
   B=M_m(\mathbb C)
$
with its Hilbert--Schmidt inner product.  Denote by
$\operatorname{PD}(B)$ the cone of positive definite self-adjoint
linear operators on the Hilbert space $B$.
For $b\succ0$, recall that
$
   \mathcal C_b:B\to B,$
   $\mathcal C_b(Y)=bYb,$
belongs to $\operatorname{PD}(B)$.

\begin{lemma}[$\mathcal C$ maps geodesics to geodesics]
\label{lem:C-geodesic}
Let
$
   b_t=b^{1/2}e^{tX}b^{1/2},$
   with
   $X=X^*,$
and define the self-adjoint operator on Hilbert--Schmidt space $B$:
\[
   D_X(Y):=XY+YX.
\]
Then
\begin{equation}\label{eq:C-geodesic}
   \mathcal C_{b_t}
   =
   \mathcal C_{b^{1/2}}\,
   e^{tD_X}\,
   \mathcal C_{b^{1/2}}.
\end{equation}
Hence $t\mapsto\mathcal C_{b_t}$ is a geodesic in the cone of positive
definite operators on Hilbert--Schmidt space $B$.  In particular,
\begin{equation}\label{eq:C-geodesic-equation}
   \mathcal C_{b_t}''
   =
   \mathcal C_{b_t}'
   \mathcal C_{b_t}^{-1}
   \mathcal C_{b_t}'.
\end{equation}
\end{lemma}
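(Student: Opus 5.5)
The plan is to factor $\mathcal C_{b_t}$ through the decomposition $b_t=b^{1/2}e^{tX}b^{1/2}$. Since congruence is multiplicative, $\mathcal C_{xy}=\mathcal C_x\mathcal C_y$ (because $xyYyx=\mathcal C_x(\mathcal C_y(Y))$), we get
\[
   \mathcal C_{b_t}=\mathcal C_{b^{1/2}}\,\mathcal C_{e^{tX}}\,\mathcal C_{b^{1/2}}.
\]
So \eqref{eq:C-geodesic} reduces to the identity $\mathcal C_{e^{tX}}=e^{tD_X}$ on $B$.

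To prove this identity, write $D_X=L_X+R_X$, where $L_X(Y)=XY$ and $R_X(Y)=YX$. These two operators commute on $B$, since $X(YX)=(XY)X$. Hence
\[
   e^{tD_X}=e^{tL_X}e^{tR_X},
\]
and $e^{tL_X}Y=e^{tX}Y$ and $e^{tR_X}Y=Ye^{tX}$ by summing the power series. This gives $e^{tD_X}(Y)=e^{tX}Ye^{tX}=\mathcal C_{e^{tX}}(Y)$.

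It remains to check that the objects are where they should be. $D_X$ is Hilbert--Schmidt self-adjoint for $X=X^*$, because $\langle Z,XY+YX\rangle=\langle XZ+ZX,Y\rangle$ by cyclicity. Also $\mathcal C_{b^{1/2}}$ is self-adjoint and positive definite on $B$. Therefore $P:=\mathcal C_{b^{1/2}}$ is an element of $\operatorname{PD}(B)$, and $t\mapsto P^{1/2}\bigl(P^{-1/2}\mathcal C_{b^{1/2}}e^{tD_X}\mathcal C_{b^{1/2}}P^{-1/2}\bigr)P^{1/2}$ has the standard geodesic form. More simply, any curve $\Gamma(t)=Q^{*}e^{tH}Q$ with $Q$ invertible and $H$ self-adjoint is an affine-invariant geodesic, since congruence by invertible operators is an isometry of $\operatorname{PD}(B)$ and $e^{tH}$ is a geodesic through $I$. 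Here we take $Q=\mathcal C_{b^{1/2}}$, which is self-adjoint, and $H=D_X$.

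For \eqref{eq:C-geodesic-equation}, I would verify it directly. Set $\Gamma(t)=Qe^{tH}Q$ with $Q=Q^*$. Then
\[
   \Gamma'=QHe^{tH}Q,\qquad \Gamma''=QH^2e^{tH}Q,\qquad \Gamma^{-1}=Q^{-1}e^{-tH}Q^{-1}.
\]
Substituting,
\[
   \Gamma'\Gamma^{-1}\Gamma'=QHe^{tH}e^{-tH}He^{tH}Q=QH^2e^{tH}Q=\Gamma''.
\]
The only point requiring care is the commutation of $L_X$ and $R_X$, which justifies splitting the exponential. Everything else is routine.
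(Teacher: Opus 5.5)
Your proof is correct and is essentially the paper's argument: factor $\mathcal C_{b_t}=\mathcal C_{b^{1/2}}\mathcal C_{e^{tX}}\mathcal C_{b^{1/2}}$, identify $\mathcal C_{e^{tX}}=e^{tD_X}$ because left and right multiplication by $X$ commute, and verify the geodesic equation by differentiating $Pe^{tD}P$ directly. One small slip is that the general rule $\mathcal C_{xy}=\mathcal C_x\mathcal C_y$ is false for noncommuting $x,y$ (the left side is $Y\mapsto xyYxy$, the right side is $Y\mapsto xyYyx$), but the palindromic identity $\mathcal C_{xyx}=\mathcal C_x\mathcal C_y\mathcal C_x$, which is all you actually use, holds by direct expansion.
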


\begin{proof}
Left and right multiplication by $X$ commute, and therefore
\[
   e^{tD_X}(Y)
   =
   e^{tX}Ye^{tX}
   =
   \mathcal C_{e^{tX}}(Y).
\]
Since
$
   \mathcal C_{b_t}
   =
   \mathcal C_{b^{1/2}}
   \mathcal C_{e^{tX}}
   \mathcal C_{b^{1/2}},
$
this proves \eqref{eq:C-geodesic}.

Write
$
   P:=\mathcal C_{b^{1/2}},$ and 
   $
   D:=D_X.
$
Then
$
   \mathcal C_{b_t}=Pe^{tD}P,
$
so
\[
   \mathcal C_{b_t}'
   =
   PDe^{tD}P,
   \qquad
   \mathcal C_{b_t}''
   =
   PD^2e^{tD}P.
\]
Since
$
   \mathcal C_{b_t}^{-1}
   =
   P^{-1}e^{-tD}P^{-1},
$
we obtain
\begin{align*}
   \mathcal C_{b_t}'
   \mathcal C_{b_t}^{-1}
   \mathcal C_{b_t}'
   &=
   PDe^{tD}P
   P^{-1}e^{-tD}P^{-1}
   PDe^{tD}P  \\
   &=
   PD^2e^{tD}P
   =
   \mathcal C_{b_t}''.
\end{align*}
\end{proof}

\begin{lemma}[Geodesic log-convexity of the normalized covariance]
\label{lem:geodesic-feasible}
Let $b_t=b_0\#_tb_1$.  Then
\begin{equation}\label{eq:eta-geodesic-logconvex}
   \|\widehat\eta_{b_t}\|
   \leq
   \|\widehat\eta_{b_0}\|^{1-t}
   \|\widehat\eta_{b_1}\|^t.
\end{equation}
Consequently, $\Fset$ is geodesically convex.  If
$b_0\in\Fset$ and $b_1\in\operatorname{int}\Fset$, then
\[
   b_t\in\operatorname{int}\Fset
   \qquad
   \text{for every }t\in(0,1].
\]
\end{lemma}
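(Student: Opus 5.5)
Write $b_t=b_0^{1/2}e^{tX}b_0^{1/2}$ and use Lemma~\ref{lem:C-geodesic}: $\mathcal C_{b_t}=Pe^{tD}P$ with $P=\mathcal C_{b_0^{1/2}}$ and $D=D_X$. Then $\mathcal C_{b_t^{-1/2}}$ need not factor nicely, since the square root of $b_t$ is not $b_0^{1/4}e^{tX/2}b_0^{1/4}$. So I would not work with $\widehat\eta_{b_t}$ directly. Instead I would characterize its norm variationally through the congruence identity \eqref{eq:normalized-minus}. For a positive operator $\mathcal M$ on Hilbert--Schmidt space and the positive self-adjoint operator $\eta$, I set $\rho(\mathcal M):=\lambda_{\max}(\mathcal M^{-1/2}\eta\mathcal M^{-1/2})$. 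This is the smallest $\lambda\ge0$ with $\eta\preceq\lambda\mathcal M$. Since $\mathcal C_{b}^{-1/2}=\mathcal C_{b^{-1/2}}$, Lemma~\ref{lem:feasible-pf} gives $\|\widehat\eta_b\|=\rho(\mathcal C_b)$. The claim therefore reduces to $\rho(\mathcal C_{b_t})\le\rho(\mathcal C_{b_0})^{1-t}\rho(\mathcal C_{b_1})^t$.

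The key step is an operator inequality along geodesics in $\operatorname{PD}(B)$. Put $\lambda_i=\rho(\mathcal C_{b_i})$, so $\eta\preceq\lambda_0\mathcal C_{b_0}$ and $\eta\preceq\lambda_1\mathcal C_{b_1}$. Lemma~\ref{lem:C-geodesic} says $\mathcal C_{b_t}=\mathcal C_{b_0}\#_t\mathcal C_{b_1}$ is the operator geometric mean in $\operatorname{PD}(B)$. The geometric mean is jointly monotone, and it is positively homogeneous in the form $(\alpha A)\#_t(\beta B)=\alpha^{1-t}\beta^t(A\#_tB)$. From these,
\[
\eta=\eta\#_t\eta\preceq(\lambda_0\mathcal C_{b_0})\#_t(\lambda_1\mathcal C_{b_1})=\lambda_0^{1-t}\lambda_1^t\,\mathcal C_{b_t}.
\]
One subtlety is that $\eta$ may be singular, so $\eta\#_t\eta$ must be read by continuity: I replace $\eta$ by $\eta+\delta I$ and the $\lambda_i$ by slightly larger numbers, then let $\delta\downarrow0$. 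Joint monotonicity of $\#_t$ (Kubo--Ando, or a direct derivation via L\"owner--Heinz monotonicity of $s\mapsto s^t$) is the only external input. I expect this step to be the main point to verify, namely that Lemma~\ref{lem:C-geodesic} really identifies $\mathcal C_{b_t}$ with the Kubo--Ando mean and not merely with some curve having the same endpoints. That identification holds because $Pe^{tD}P=P(P^{-1}\mathcal C_{b_1}P^{-1})^tP$ with $P^2=\mathcal C_{b_0}$ and $e^{D}=P^{-1}\mathcal C_{b_1}P^{-1}$.

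The rest is routine. The displayed inequality gives $\rho(\mathcal C_{b_t})\le\lambda_0^{1-t}\lambda_1^t$, which is \eqref{eq:eta-geodesic-logconvex}. If $b_0,b_1\in\Fset$, then both norms are at most $1$ by Lemma~\ref{lem:feasible-pf}(iii), so $\|\widehat\eta_{b_t}\|\le1$ and $\Fset$ is geodesically convex. If moreover $\|\widehat\eta_{b_1}\|<1$ and $t\in(0,1]$, the right-hand side is strictly less than $1$, since $\|\widehat\eta_{b_0}\|^{1-t}\le1$ and $\|\widehat\eta_{b_1}\|^t<1$. By \eqref{eq:interior-feasible}, $b_t\in\operatorname{int}\Fset$. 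The case $\|\widehat\eta_{b_0}\|=0$ (which happens only when $\eta=0$) is trivial and is also covered by the convention $0^{1-t}=0$.
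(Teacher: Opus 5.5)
Your setup is sound. $\|\widehat\eta_b\|=\lambda_{\max}(\mathcal C_b^{-1/2}\eta\,\mathcal C_b^{-1/2})$ by Lemma~\ref{lem:feasible-pf}, and $\mathcal C_{b_t}=\mathcal C_{b_0}\#_t\mathcal C_{b_1}$ is indeed the Kubo--Ando mean. The gap is the key step $\eta=\eta\#_t\eta\preceq(\lambda_0\mathcal C_{b_0})\#_t(\lambda_1\mathcal C_{b_1})$.

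Joint monotonicity of $\#_t$ is a statement about positive semidefinite arguments. But $\eta$ is not positive semidefinite as an operator on Hilbert--Schmidt space. It maps positive matrices to positive matrices and is Hilbert--Schmidt self-adjoint, yet for $\eta(X)=AXA$ with $A=\mathrm{diag}(1,-1)$ one has $\eta(E_{12})=-E_{12}$. This is why $L_b^+\succeq0$ is a genuine condition in Lemma~\ref{lem:feasible-pf}(iv), and why Lemma~\ref{lem:resolvent-geodesic} stresses that no positivity of $\Theta$ is needed. Replacing $\eta$ by $\eta+\delta I$ removes a kernel, not negative eigenvalues.

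The step also cannot be rescued abstractly. For indefinite $\Theta$, the implication $\Theta\preceq M_0,\ \Theta\preceq M_1\Rightarrow\Theta\preceq M_0\#_tM_1$ is false. Take $M_0=I$, $M_1=\mathrm{diag}(1/4,4)$ and $\Theta=-\left(\begin{smallmatrix}3&6\\6&8\end{smallmatrix}\right)$. Then $M_0-\Theta=\left(\begin{smallmatrix}4&6\\6&9\end{smallmatrix}\right)$ and $M_1-\Theta=\left(\begin{smallmatrix}13/4&6\\6&12\end{smallmatrix}\right)$ are positive semidefinite. However, $M_0\#_{1/2}M_1-\Theta=\mathrm{diag}(1/2,2)-\Theta=\left(\begin{smallmatrix}7/2&6\\6&10\end{smallmatrix}\right)$ has determinant $-1$. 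So as written, your argument only covers covariance maps that are positive on Hilbert--Schmidt space, for instance all $A_j\succeq0$.

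The paper avoids any sign condition. Via the weighted-norm formula \eqref{eq:eta-quotient} it rewrites $\|\widehat\eta_{b_t}\|=\|W^{-t/2}\Phi W^{-t/2}\|$ with $W=e^{D_X}$ and $\Phi=\widehat\eta_{b_0}$. It then applies Hadamard's three-lines theorem, a Heinz--Cordes type interpolation that works for arbitrary self-adjoint $\Phi$.

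If you want to keep your Kubo--Ando route, square first. Since $\widehat\eta_b$ is self-adjoint, $\|\widehat\eta_b\|\le\lambda$ if and only if $\eta\,\mathcal C_b^{-1}\eta\preceq\lambda^2\mathcal C_b$, and now the left side is positive semidefinite. Then use $\mathcal C_{b_t}^{-1}=\mathcal C_{b_0}^{-1}\#_t\mathcal C_{b_1}^{-1}$ and the transformer inequality $T^*(M\#_tN)T\preceq(T^*MT)\#_t(T^*NT)$ with $T=\eta$. Joint monotonicity and homogeneity on positive semidefinite arguments then give $\eta\,\mathcal C_{b_t}^{-1}\eta\preceq\lambda_0^{2(1-t)}\lambda_1^{2t}\,\mathcal C_{b_t}$, which is \eqref{eq:eta-geodesic-logconvex}.

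Your derivation of the consequences for $\Fset$ and its interior is fine once the inequality is in place.
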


\begin{proof}
Let
\[
   W:=e^{D_X}\succ0,
   \qquad
   \Phi:=\widehat\eta_{b_0}.
\]
By \eqref{eq:C-geodesic},
$
   \mathcal C_{b_t}
   =
   \mathcal C_{b_0^{1/2}}
   W^t
   \mathcal C_{b_0^{1/2}}.
$

For $b\succ0$, introduce the weighted norm
\[
   \|Y\|_{\mathcal C_b}^2
   :=
   \ip{Y}{\mathcal C_bY}.
\]
A change of variables in the Hilbert--Schmidt operator norm gives
\begin{equation}\label{eq:eta-quotient}
   \|\widehat\eta_b\|
   =
   \sup_{Y,Z\neq0}
   \frac{
      |\ip{Y}{\eta(Z)}|
   }{
      \|Y\|_{\mathcal C_b}
      \|Z\|_{\mathcal C_b}
   }.
\end{equation}
Applying this to $b_t$ and changing variables by
$\mathcal C_{b_0^{-1/2}}$ gives
\[
   \|\widehat\eta_{b_t}\|
   =
   \bigl\|
      W^{-t/2}\Phi W^{-t/2}
   \bigr\|.
\]

Now consider
\[
   f(\zeta)
   :=
   W^{-\zeta/2}\Phi W^{-\zeta/2},
   \qquad
   0\leq\operatorname{Re}\zeta\leq1.
\]
Since $W\succ0$, the complex powers are well defined and $f$ is analytic
in the strip.  On the line $\operatorname{Re}\zeta=0$,
$W^{-is/2}$ is unitary, so
\[
   \|f(is)\|=\|\Phi\|=\|\widehat\eta_{b_0}\|.
\]
Similarly,
\[
   \|f(1+is)\|
   =
   \|W^{-1/2}\Phi W^{-1/2}\|
   =
   \|\widehat\eta_{b_1}\|.
\]

Fix unit vectors $u, v$ in Hilbert--Schmidt space and apply
Hadamard's three-lines theorem to the scalar analytic function
\[
   \zeta\longmapsto
   \ip{u}{f(\zeta)v}.
\]
Taking the supremum over the two unit vectors gives
\eqref{eq:eta-geodesic-logconvex}.

If $b_0,b_1\in\Fset$, then both endpoint norms are at most $1$, hence
$
   \|\widehat\eta_{b_t}\|\leq1,
$
and $b_t\in\Fset$ by Lemma~\ref{lem:feasible-pf}.  If in addition
$b_1\in\operatorname{int}\Fset$, then
$\|\widehat\eta_{b_1}\|<1$, and for every $t>0$,
$
   \|\widehat\eta_{b_t}\|
   \leq
   \|\widehat\eta_{b_1}\|^t
   <1.
$
Thus $b_t\in\operatorname{int}\Fset$.
\end{proof}

\subsection{Geodesic convexity of the objective}
\label{subsec:geodesic-F}

In addition to the convexity of $F_z$, we record the corresponding
operator convexity along geodesics for both inverse terms
$(\mathcal C_b+\eta)^{-1}$ and $(\mathcal C_b-\eta)^{-1}$.
The second will be used later. Throughout this subsection, write
\begin{equation*}
   L_b^+:=\mathcal C_b+\eta=L_b,
   \qquad
   L_b^-:=\mathcal C_b-\eta.
\end{equation*}
All inequalities between operators are understood in the
Hilbert--Schmidt quadratic-form order.

\begin{lemma}[Inverse of a shifted operator geodesic]
\label{lem:resolvent-geodesic}
Let $t\mapsto\mathcal C_t$ be a geodesic in $\operatorname{PD}(B)$,
and let $\Theta=\Theta^*$ be a fixed operator on $B$. Suppose
$L_t:=\mathcal C_t+\Theta\succ0$ throughout the interval under
consideration. Define
\begin{equation*}
   \widehat\Theta_t
   :=\mathcal C_t^{-1/2}\Theta\mathcal C_t^{-1/2},
   \qquad
   K_t:=\mathcal C_t^{-1/2}\mathcal C_t'L_t^{-1}.
\end{equation*}
Then
\begin{equation}\label{eq:resolvent-hessian}
   \frac{d^2}{dt^2}L_t^{-1}=K_t^*\Gamma_tK_t,
\end{equation}
where
\begin{equation}\label{eq:Cayley-factor}
   \Gamma_t
   :=(I-\widehat\Theta_t)(I+\widehat\Theta_t)^{-1}.
\end{equation}
In particular, at every $t$ for which $\Theta\preceq\mathcal C_t$,
\begin{equation*}
   \frac{d^2}{dt^2}L_t^{-1}\succeq0.
\end{equation*}
No positivity assumption on the fixed operator $\Theta$ is required.
\end{lemma}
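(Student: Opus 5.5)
The plan is a direct computation of the second derivative of $L_t^{-1}$, followed by a congruence by $\mathcal C_t^{1/2}$ that turns the middle factor into a rational function of the single self-adjoint operator $\widehat\Theta_t$.

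\emph{Step 1: differentiate.} Since $\Theta$ is constant, $L_t'=\mathcal C_t'$ and $L_t''=\mathcal C_t''$. The geodesic equation \eqref{eq:C-geodesic-equation} gives $\mathcal C_t''=\mathcal C_t'\mathcal C_t^{-1}\mathcal C_t'$; here I only need the fact, stated in Lemma~\ref{lem:C-geodesic}, that geodesics in $\operatorname{PD}(B)$ satisfy this equation. Differentiating $L_t^{-1}$ twice gives
\[
   \frac{d^2}{dt^2}L_t^{-1}
   =2L_t^{-1}\mathcal C_t'L_t^{-1}\mathcal C_t'L_t^{-1}-L_t^{-1}\mathcal C_t''L_t^{-1}.
\]
Substituting the geodesic equation, this becomes
\[
   \frac{d^2}{dt^2}L_t^{-1}
   =L_t^{-1}\mathcal C_t'\bigl(2L_t^{-1}-\mathcal C_t^{-1}\bigr)\mathcal C_t'L_t^{-1}.
\]

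\emph{Step 2: normalize the middle factor.} Factor
\[
   L_t=\mathcal C_t^{1/2}\bigl(I+\widehat\Theta_t\bigr)\mathcal C_t^{1/2}.
\]
Because $L_t\succ0$ and $\mathcal C_t^{1/2}$ is invertible, this shows $I+\widehat\Theta_t\succ0$, so $(I+\widehat\Theta_t)^{-1}$ exists. It follows that
\[
   2L_t^{-1}-\mathcal C_t^{-1}
   =\mathcal C_t^{-1/2}\bigl[2(I+\widehat\Theta_t)^{-1}-I\bigr]\mathcal C_t^{-1/2}.
\]
The bracket simplifies by functional calculus: $2(I+\widehat\Theta_t)^{-1}-I=(I-\widehat\Theta_t)(I+\widehat\Theta_t)^{-1}=\Gamma_t$.

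\emph{Step 3: identify $K_t$.} The operators $\mathcal C_t^{\pm1/2}$, $\mathcal C_t'$ and $L_t^{-1}$ are self-adjoint on Hilbert--Schmidt space. Hence $K_t^*=L_t^{-1}\mathcal C_t'\mathcal C_t^{-1/2}$, and the expression from Step 1 is exactly $K_t^*\Gamma_tK_t$, which is \eqref{eq:resolvent-hessian}. This step uses only $\Theta=\Theta^*$; no positivity of $\Theta$ enters anywhere.

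\emph{Step 4: positivity.} $\Gamma_t$ is a real function of the self-adjoint operator $\widehat\Theta_t$, namely $\lambda\mapsto(1-\lambda)/(1+\lambda)$ on its spectrum, which lies in $(-1,\infty)$ because $I+\widehat\Theta_t\succ0$. The hypothesis $\Theta\preceq\mathcal C_t$ is equivalent under congruence by $\mathcal C_t^{-1/2}$ to $\widehat\Theta_t\preceq I$, i.e.\ the spectrum lies in $(-1,1]$. On that interval the function is nonnegative, so $\Gamma_t\succeq0$ and therefore $K_t^*\Gamma_tK_t\succeq0$.

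There is no serious obstacle here. The only points needing care are invoking the geodesic equation for $\mathcal C_t''$ and correctly handling the noncommuting factors when recognizing $K_t^*$.
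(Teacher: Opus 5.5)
Your proposal is correct and follows the paper's proof essentially line for line. The shared steps are the second-derivative formula for $L_t^{-1}$ simplified via the geodesic equation, the factorization $L_t=\mathcal C_t^{1/2}(I+\widehat\Theta_t)\mathcal C_t^{1/2}$ that turns $2L_t^{-1}-\mathcal C_t^{-1}$ into $\mathcal C_t^{-1/2}\Gamma_t\mathcal C_t^{-1/2}$, and functional calculus on $\widehat\Theta_t$ for positivity. Your explicit note that $\mathcal C_t'$ is self-adjoint, so that $K_t^*=L_t^{-1}\mathcal C_t'\mathcal C_t^{-1/2}$, supplies a detail the paper leaves implicit.
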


\begin{proof}
Put $R_t:=L_t^{-1}$. Differentiating the inverse gives
\begin{equation*}
   R_t'=-R_t\mathcal C_t'R_t,
   \qquad
   R_t''=2R_t\mathcal C_t'R_t\mathcal C_t'R_t
            -R_t\mathcal C_t''R_t.
\end{equation*}
The geodesic equation
$\mathcal C_t''=\mathcal C_t'\mathcal C_t^{-1}\mathcal C_t'$
therefore yields
\begin{equation}\label{eq:resolvent-second-factor}
   R_t''
   =R_t\mathcal C_t'
      \bigl(2R_t-\mathcal C_t^{-1}\bigr)
      \mathcal C_t'R_t.
\end{equation}
Since
\begin{equation*}
   L_t=\mathcal C_t^{1/2}
           (I+\widehat\Theta_t)\mathcal C_t^{1/2},
\end{equation*}
we have
\begin{align}
   2L_t^{-1}-\mathcal C_t^{-1}
   &=\mathcal C_t^{-1/2}
       \bigl[2(I+\widehat\Theta_t)^{-1}-I\bigr]
       \mathcal C_t^{-1/2}\notag\\
   &=\mathcal C_t^{-1/2}
       (I-\widehat\Theta_t)(I+\widehat\Theta_t)^{-1}
       \mathcal C_t^{-1/2}.
   \label{eq:Cayley-factorization}
\end{align}
Substitution into \eqref{eq:resolvent-second-factor} proves
\eqref{eq:resolvent-hessian}.

The assumption $L_t\succ0$ implies $I+\widehat\Theta_t\succ0$.
If also $\Theta\preceq\mathcal C_t$, then
$I-\widehat\Theta_t\succeq0$. These two operators commute, since
both are functions of the self-adjoint operator $\widehat\Theta_t$.
Hence $\Gamma_t\succeq0$ by functional calculus, and
\eqref{eq:resolvent-hessian} gives the claimed inequality.
\end{proof}

\begin{corollary}[Geodesic convexity of both inverse terms]
\label{cor:two-inverse-geodesic}
Let $b_t$ be a geodesic contained in
$\operatorname{int}\mathcal F$. Put
\begin{equation*}
   \mathcal C_t:=\mathcal C_{b_t},
   \qquad
   R_\pm(t):=(L_{b_t}^\pm)^{-1},
   \qquad
   K_\pm(t):=\mathcal C_t^{-1/2}\mathcal C_t'R_\pm(t).
\end{equation*}
Then, with
$\widehat\eta_t:=\mathcal C_t^{-1/2}\eta\mathcal C_t^{-1/2}
=\widehat\eta_{b_t}$,
\begin{equation}\label{eq:two-inverse-hessian}
   R_\pm''(t)
   =K_\pm(t)^*
      (I\mp\widehat\eta_t)(I\pm\widehat\eta_t)^{-1}
      K_\pm(t)
   \succeq0.
\end{equation}
Thus both $b\mapsto(L_b^+)^{-1}$ and $b\mapsto(L_b^-)^{-1}$
are geodesically convex in the L\"owner order on
$\operatorname{int}\mathcal F$. In particular, for every fixed
Hermitian matrix $v$, the following functions are geodesically convex there:
\begin{equation}\label{eq:two-inverse-quadratic}
   q_v^\pm(b):=\frac12
      \left\langle v,(L_b^\pm)^{-1}v\right\rangle.
\end{equation}
\end{corollary}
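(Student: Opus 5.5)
The plan is to apply Lemma~\ref{lem:resolvent-geodesic} twice, once with $\Theta=+\eta$ and once with $\Theta=-\eta$, along the operator geodesic $\mathcal C_t=\mathcal C_{b_t}$.

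\textbf{Step 1: the operator path is a geodesic.} By Lemma~\ref{lem:C-geodesic}, $t\mapsto\mathcal C_{b_t}$ is a geodesic in $\operatorname{PD}(B)$. So the hypotheses of Lemma~\ref{lem:resolvent-geodesic} on $\mathcal C_t$ hold.

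\textbf{Step 2: positivity of $L_t$.} Since $b_t\in\operatorname{int}\mathcal F$ for every $t$ in the interval, Lemma~\ref{lem:feasible-pf} gives $\|\widehat\eta_{b_t}\|<1$. Hence $L_{b_t}^\pm\succ0$. We also have $\pm\eta\preceq\mathcal C_t$, because $I\mp\widehat\eta_t\succeq0$ and congruence by $\mathcal C_t^{1/2}$ preserves this.

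\textbf{Step 3: identify the formula.} With $\Theta=\pm\eta$ we get $\widehat\Theta_t=\pm\widehat\eta_t$, and the Cayley factor \eqref{eq:Cayley-factor} becomes $(I\mp\widehat\eta_t)(I\pm\widehat\eta_t)^{-1}$. Also $K_t$ becomes $K_\pm(t)$. So \eqref{eq:resolvent-hessian} is exactly \eqref{eq:two-inverse-hessian}. The positivity $\succeq0$ follows from the last clause of Lemma~\ref{lem:resolvent-geodesic}, or directly: both factors are functions of the self-adjoint $\widehat\eta_t$, with $I\mp\widehat\eta_t\succeq0$ and $I\pm\widehat\eta_t\succ0$.

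\textbf{Step 4: convexity.} A $C^2$ operator-valued function with nonnegative second derivative is convex in the L\"owner order. To see this, pair with any vector $v$: $t\mapsto\langle v,R_\pm(t)v\rangle$ has nonnegative second derivative. This pairing gives geodesic convexity of $q_v^\pm$ along every geodesic inside $\operatorname{int}\mathcal F$. Smoothness is clear since inversion is analytic where $L_{b_t}^\pm\succ0$.

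\textbf{Main obstacle.} The only subtle point is the sign case $\Theta=-\eta$, where $\Theta$ is not positive. Lemma~\ref{lem:resolvent-geodesic} explicitly allows this, but its requirement $L_t\succ0$ must then come from strict interiority via Lemma~\ref{lem:feasible-pf}. At boundary points $L^-_b$ is singular, which is why the statement is restricted to $\operatorname{int}\mathcal F$.
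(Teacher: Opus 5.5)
Your proposal is correct and matches the paper's proof: Lemma~\ref{lem:C-geodesic} for the operator geodesic, then Lemma~\ref{lem:feasible-pf} for $\|\widehat\eta_{b_t}\|<1$ and hence $L^\pm_{b_t}\succ0$, then Lemma~\ref{lem:resolvent-geodesic} with $\Theta=\pm\eta$, and finally pairing with $v$. One small correction to your closing remark: $L^+_b\succ0$ is not automatic either, because $\eta$ is a positive map but need not be positive semidefinite on Hilbert--Schmidt space (e.g.\ $\eta(X)=AXA$ with $A=\operatorname{diag}(1,-1)$ sends $E_{12}$ to $-E_{12}$), so your Step~2 is genuinely needed for both signs, and $L^+_b$ can also be singular on $\partial\mathcal F$.
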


\begin{proof}
By Lemma~\ref{lem:C-geodesic}, $t\mapsto\mathcal C_t$ is an
operator geodesic. By Lemma~\ref{lem:feasible-pf},
$\|\widehat\eta_t\|<1$, so $I\pm\widehat\eta_t\succ0$ and
$L_{b_t}^\pm\succ0$.

Apply Lemma~\ref{lem:resolvent-geodesic} with
$\Theta=\eta$ and $\Theta=-\eta$, respectively. This gives
\eqref{eq:two-inverse-hessian}. Both middle factors are positive
definite: for $-1<\lambda<1$, the numbers
$(1-\lambda)/(1+\lambda)$ and $(1+\lambda)/(1-\lambda)$ are positive.
Pairing \eqref{eq:two-inverse-hessian} with the fixed vector $v$
proves the assertion about $q_v^\pm$.

The use of $\Theta=-\eta$ requires only self-adjointness of the
shift in the abstract lemma; it does not assert that $-\eta$ is a
positive map.
\end{proof}

\begin{proposition}[Geodesic Hessian of $F_z$]
\label{prop:geodesic-hessian}
Let $b\in\operatorname{int}\mathcal F$, let $X=X^*$, and consider
$b_t=b^{1/2}e^{tX}b^{1/2}$ for $t$ sufficiently close to zero.
Set
\begin{equation*}
   u:=(L_b^+)^{-1}a_z,
   \qquad
   \widetilde u:=b^{1/2}ub^{1/2}.
\end{equation*}
Then
\begin{equation}\label{eq:geodesic-hessian}
   \boxed{
   \left.\frac{d^2}{dt^2}F_z(b_t)\right|_{t=0}
   =\frac12\left\langle
      X\widetilde u+\widetilde uX,\,
      (I-\widehat\eta_b)(I+\widehat\eta_b)^{-1}
      (X\widetilde u+\widetilde uX)
   \right\rangle.
   }
\end{equation}
In particular, this second derivative is nonnegative, and $F_z$
is geodesically convex on $\mathcal F$.
\end{proposition}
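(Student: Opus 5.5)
On $\operatorname{int}\mathcal F$ we have $L_b^+\succ0$, so
\[
F_z(b)=\trm\log b+\tfrac12\langle a_z,(L_b^+)^{-1}a_z\rangle .
\]
Along $b_t=b^{1/2}e^{tX}b^{1/2}$ the term $\trm\log b_t=\trm\log b+t\,\trm X$ is affine. Its second derivative vanishes, so the Hessian comes entirely from $q^+_{a_z}(b_t)=\frac12\langle a_z,R_+(t)a_z\rangle$.

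By Corollary~\ref{cor:two-inverse-geodesic} (Lemma~\ref{lem:resolvent-geodesic} with $\Theta=\eta$),
\[
R_+''(0)=K_+^*(I-\widehat\eta_b)(I+\widehat\eta_b)^{-1}K_+ ,\qquad K_+=\mathcal C_b^{-1/2}\mathcal C_0'(L_b^+)^{-1}.
\]
Pairing with $a_z$ gives
\[
\tfrac12\bigl\langle K_+a_z,(I-\widehat\eta_b)(I+\widehat\eta_b)^{-1}K_+a_z\bigr\rangle .
\]
It then suffices to identify $K_+a_z$. Here $(L_b^+)^{-1}a_z=u$, which is Hermitian because $L_b^+$ preserves Hermitian matrices. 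By Lemma~\ref{lem:C-geodesic}, $\mathcal C_0'=\mathcal C_{b^{1/2}}D_X\mathcal C_{b^{1/2}}$. Also $\mathcal C_b^{-1/2}=\mathcal C_{b^{-1/2}}$, since $\mathcal C_{b^{1/2}}^2=\mathcal C_b$ and $\mathcal C_{b^{1/2}}$ is positive on Hilbert--Schmidt space. Hence
\[
K_+a_z=\mathcal C_{b^{-1/2}}\mathcal C_{b^{1/2}}D_X\mathcal C_{b^{1/2}}u=D_X(b^{1/2}ub^{1/2})=X\widetilde u+\widetilde uX ,
\]
which yields \eqref{eq:geodesic-hessian}.

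The operator $(I-\widehat\eta_b)(I+\widehat\eta_b)^{-1}$ is positive semidefinite whenever $\|\widehat\eta_b\|<1$, by Lemma~\ref{lem:feasible-pf} and functional calculus, so the Hessian is nonnegative at every interior point. Every geodesic segment with both endpoints in $\operatorname{int}\mathcal F$ stays in the interior by Lemma~\ref{lem:geodesic-feasible}. Along such a segment $t\mapsto F_z(b_t)$ is smooth with nonnegative second derivative at each point (recentring the formula at $b_t$), hence convex.

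To pass to all of $\mathcal F$, take $b_0,b_1\in\mathcal F$. Use Lemma~\ref{lem:radial-approximation}: the points $\lambda b_0,\lambda b_1$ are interior for $\lambda>1$, and the geodesic between them is $\lambda b_t$. Convexity gives
\[
F_z(\lambda b_t)\le(1-t)F_z(\lambda b_0)+tF_z(\lambda b_1).
\]
Let $\lambda\downarrow1$. The right side converges by \eqref{eq:radial-limit}, with values in $(-\infty,+\infty]$, and so does the left side since $b_t\in\mathcal F$. This handles the extended-value case as well.

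The main obstacle is bookkeeping rather than analysis. One must check that the square root $\mathcal C_b^{-1/2}$ appearing in Lemma~\ref{lem:resolvent-geodesic} is exactly $\mathcal C_{b^{-1/2}}$, so that $\widehat\Theta_0=\widehat\eta_b$ and $K_+a_z$ collapses to $D_X\widetilde u$. One must also justify the boundary extension via the monotone radial limit.
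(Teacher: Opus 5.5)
Your proposal is correct and follows the paper's proof essentially step by step. The shared steps are the affine $\operatorname{tr}_m\log b_t$ term, the plus case of Corollary~\ref{cor:two-inverse-geodesic} paired with $a_z$, the identification $K_+(0)a_z=X\widetilde u+\widetilde uX$ via $\mathcal C_b^{-1/2}=\mathcal C_{b^{-1/2}}$, and the extension to $\partial\mathcal F$ through $(\lambda b_0)\#_t(\lambda b_1)=\lambda(b_0\#_t b_1)$ with Lemma~\ref{lem:radial-approximation}. The only difference is cosmetic: you compute $K_+a_z$ from $\mathcal C_0'=\mathcal C_{b^{1/2}}D_X\mathcal C_{b^{1/2}}$ (Lemma~\ref{lem:C-geodesic}), whereas the paper uses $h=b^{1/2}Xb^{1/2}$ and $\mathcal C_0'(Y)=hYb+bYh$.
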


\begin{proof}
On $\operatorname{int}\mathcal F$,
\begin{equation*}
   F_z(b)=\operatorname{tr}_m\log b+q_{a_z}^+(b).
\end{equation*}
The quadratic term is geodesically convex by
Corollary~\ref{cor:two-inverse-geodesic}. The logarithmic term is
affine along geodesics:
\begin{equation*}
   \operatorname{tr}_m\log b_t
   =\operatorname{tr}_m\log b
      +t\,\operatorname{tr}_mX.
\end{equation*}
This proves geodesic convexity of $F_z$.

For the explicit Hessian, put
$h:=\dot b_0=b^{1/2}Xb^{1/2}$ and
$\mathcal C_t:=\mathcal C_{b_t}$. Then
\begin{equation*}
   \mathcal C_0'(Y)=hYb+bYh.
\end{equation*}
The logarithmic term has zero second derivative, so the plus case
of \eqref{eq:two-inverse-hessian} gives
\begin{equation*}
   \left.\frac{d^2}{dt^2}F_z(b_t)\right|_{t=0}
   =\frac12\left\langle
      K_+(0)a_z,\,
      (I-\widehat\eta_b)(I+\widehat\eta_b)^{-1}K_+(0)a_z
   \right\rangle.
\end{equation*}
Since $\mathcal C_b^{-1/2}=\mathcal C_{b^{-1/2}}$, we obtain
\begin{align*}
   K_+(0)a_z
   &=\mathcal C_b^{-1/2}\mathcal C_0'u\\
   &=b^{-1/2}(hub+buh)b^{-1/2}\\
   &=Xb^{1/2}ub^{1/2}+b^{1/2}ub^{1/2}X\\
   &=X\widetilde u+\widetilde uX.
\end{align*}
Substitution proves \eqref{eq:geodesic-hessian}. 

Since this second derivative is nonnegative, $F_z$ is geodesically convex on $\Fint$. For the points on the boundary, we can approximate: $(\lambda b_0) \#_t (\lambda b_1) = \lambda (b_0 \#_t b_1)$ is interior for $\lambda > 1$; let $\lambda \downarrow 1$ with Lemma \ref{lem:radial-approximation}.

\end{proof}

\section{Unitary normalizers and balanced Dyson equations}
\label{sec:normalizers}

The variational problem is formulated in terms of a positive matrix $b$.
We now explain how such a matrix arises canonically from the Dyson
solution itself.

The relevant factorization is a symmetric analogue of the polar
decomposition.

\subsection{Unitary normalizers and the $pUp$ factorization}

\begin{definition}[Unitary normalizer]
\label{def:unitary-normalizer}
Let $G\in M_m(\mathbb C)$ be invertible.  A matrix $b\succ0$ is called a
\emph{unitary normalizer} of $G$ if
\begin{equation}\label{eq:unitary-normalizer}
   U:=b^{1/2}Gb^{1/2}
\end{equation}
is unitary.
\end{definition}

The condition in Definition~\ref{def:unitary-normalizer} has several useful
equivalent forms.  Since $U^*U=UU^*=I$,
\begin{equation}\label{eq:normalizer-identities}
   G^*bG=b^{-1},
   \qquad
   GbG^*=b^{-1}.
\end{equation}
Equivalently,
\begin{equation}\label{eq:normalizer-inverse}
   G^{-1}=bG^*b.
\end{equation}

If $G=G^*$ is invertible, then
$
   b=|G|^{-1}
$
is a unitary normalizer and
\[
   b^{1/2}Gb^{1/2}
   =
   \operatorname{sign}(G),
\]
a self-adjoint unitary.  Thus the usual sign decomposition of a
self-adjoint matrix is the real special case of the construction below.

\begin{lemma}[$pUp$ factorization]
\label{lem:pup}
Let
\[
   G=X-iY,
   \qquad
   X=X^*,
   \qquad
   Y\succ0.
\]
Set
\[
   T:=Y^{-1/2}XY^{-1/2}
\]
and define $b_*\succ0$ by
\begin{equation}\label{eq:pup-b}
   b_*^{-1}
   :=
   Y^{1/2}
   \bigl(I+T^2\bigr)^{1/2}
   Y^{1/2}.
\end{equation}
Then $b_*$ is a unitary normalizer of $G$.

Equivalently, every matrix with strictly negative imaginary part admits a
factorization
\begin{equation}\label{eq:pup}
   G=pUp,
   \qquad
   p\succ0,
   \qquad
   U\in U(m),
\end{equation}
with $p=b_*^{-1/2}$. The factorization is unique: if $b$ is a unitary normalizer of $G$, then $b=b_*$.

For $m=1$,
$
   b_*=|G|^{-1}.
$
\end{lemma}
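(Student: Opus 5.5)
The plan is to reduce everything to the congruence $G\mapsto Y^{-1/2}GY^{-1/2}$, which turns $G$ into the normal matrix $T-iI$. Put $N:=Y^{-1/2}GY^{-1/2}=T-iI$. Since $T$ is Hermitian, $N$ is normal, with $N^*N=NN^*=I+T^2$, and it commutes with $(I+T^2)^{-1/2}$. Hence
\[
V:=(I+T^2)^{-1/4}\,N\,(I+T^2)^{-1/4}=N(I+T^2)^{-1/2}
\]
is unitary: it is the unitary factor of the polar decomposition of the normal matrix $N$.

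Next set $p:=Y^{1/2}(I+T^2)^{1/4}$. This is not Hermitian, so I would pass to a symmetric form. We have $G=Y^{1/2}NY^{1/2}=pVp^*$ and $pp^*=Y^{1/2}(I+T^2)^{1/2}Y^{1/2}=b_*^{-1}$. Take the polar decomposition $p=b_*^{-1/2}W$ with $W$ unitary. Then
\[
G=b_*^{-1/2}\,(WVW^*)\,b_*^{-1/2},
\]
so $b_*^{1/2}Gb_*^{1/2}=WVW^*$ is unitary. This proves that $b_*$ is a unitary normalizer, and it also gives the $pUp$ form \eqref{eq:pup} with $p=b_*^{-1/2}$. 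For $m=1$, \eqref{eq:pup-b} gives $b_*^{-1}=Y\sqrt{1+X^2/Y^2}=\sqrt{X^2+Y^2}=|G|$.

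Uniqueness is the main step. Suppose $b\succ0$ is a unitary normalizer of $G$. By \eqref{eq:normalizer-inverse}, $G^{-1}=bG^*b$. I would rewrite this as $G^{-1}b^{-1}=bG^*$, or equivalently, using \eqref{eq:normalizer-identities}, $GbG^*=b^{-1}$.
\begin{itemize}
\item Write $\beta:=Y^{1/2}bY^{1/2}\succ0$. The identity $GbG^*=b^{-1}$ becomes $N\beta N^*=\beta^{-1}$.
\item In the same way, $G^*bG=b^{-1}$ becomes $N^*\beta N=\beta^{-1}$.
\item Expanding with $N=T-iI$ and subtracting the two identities gives $T\beta-\beta T=0$, so $\beta$ commutes with $T$.
\item Substituting back, $N\beta N^*=\beta(I+T^2)=\beta^{-1}$, so $\beta^2=(I+T^2)^{-1}$.
\end{itemize}

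Since $\beta\succ0$, this forces $\beta=(I+T^2)^{-1/2}$. Therefore
\[
b=Y^{-1/2}(I+T^2)^{-1/2}Y^{-1/2}=b_*,
\]
which is exactly the inverse of \eqref{eq:pup-b}. The same computation also re-derives existence.

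I expect the only delicate point to be the subtraction step showing $[T,\beta]=0$. Expanding, $N\beta N^*=(T-i)\beta(T+i)=T\beta T+\beta+i(T\beta-\beta T)$ and $N^*\beta N=T\beta T+\beta-i(T\beta-\beta T)$, so their difference is $2i[T,\beta]=0$. Given uniqueness of $b$, uniqueness of the factorization \eqref{eq:pup} follows, since $p=b^{-1/2}$ and then $U=pGp$ is determined.
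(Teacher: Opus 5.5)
Your proof is correct. The existence half is the paper's argument in different letters: congruence by $Y^{-1/2}$ to the normal matrix $T-iI$, the symmetric splitting $(I+T^2)^{1/4}V(I+T^2)^{1/4}$, and a polar decomposition of $Y^{1/2}(I+T^2)^{1/4}$.

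The uniqueness half takes a different route.
\begin{itemize}
\item The paper uses only the imaginary part of $G^{-1}=bG^*b$. This is the Riccati equation $bYb=\operatorname{Im}(G^{-1})$, whose positive definite solution is unique. Since $b_*$ is already known to be a normalizer, any normalizer equals $b_*$.
\item You use both identities $GbG^*=G^*bG=b^{-1}$ in the balanced coordinates $\beta=Y^{1/2}bY^{1/2}$. From their difference you extract $[T,\beta]=0$, and you then solve $\beta^2=(I+T^2)^{-1}$.
\end{itemize}

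Your version computes $b_*$ outright. It therefore proves existence and uniqueness at once, and the polar-decomposition step becomes unnecessary. The paper's version is shorter. It also ties the scale to the imaginary part of $G^{-1}$, which is exactly the identity reused later in the balanced Dyson equations.

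The two routes meet. In your coordinates the normalizer identity reads $N^{-1}=\beta N^*\beta$, and taking imaginary parts gives $\beta^2=\operatorname{Im}(N^{-1})=(I+T^2)^{-1}$ directly. So your commutator step can be skipped.

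One slip at the end: if $G=pUp$ with $p=b^{-1/2}$, then $U=p^{-1}Gp^{-1}=b^{1/2}Gb^{1/2}$, not $pGp$. Relatedly, your use of $p$ for the non-Hermitian factor $Y^{1/2}(I+T^2)^{1/4}$ clashes with the statement's $p=b_*^{-1/2}$.
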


This factorization is closely related to the balanced polar
decomposition used in the analysis of the matrix Dyson equation;
see \cite{AjankiErdosKruger2019}.

\begin{proof}
Since
$
   G
   =
   Y^{1/2}(T-iI)Y^{1/2},
$
it is enough first to factor $T-iI$.

Because $T=T^*$, the matrices $T$ and $I+T^2$ commute.  Define
\[
   A:=(I+T^2)^{1/4}
\]
and
\[
   U_0
   :=
   (T-iI)(I+T^2)^{-1/2}.
\]
Then
\[
   T-iI
   =
   A\,U_0\,A.
\]
Moreover,
\begin{align*}
   U_0^*U_0
   &=
   (I+T^2)^{-1/2}
   (T+iI)(T-iI)
   (I+T^2)^{-1/2} \\
   &=
   I,
\end{align*}
so $U_0$ is unitary.

Hence
$
   G
   =
   L\,U_0\,L^*,$ where
  $ L:=Y^{1/2}A. $
Take the polar decomposition
\[
   L=pV,
   \qquad
   p:=(LL^*)^{1/2}\succ0,
   \qquad
   V\in U(m).
\]
Then
\[
   G
   =
   p\,(VU_0V^*)\,p,
\]
and $VU_0V^*$ is unitary.  Thus \eqref{eq:pup} holds.

Finally,
\[
   p^2
   =
   LL^*
   =
   Y^{1/2}
   (I+T^2)^{1/2}
   Y^{1/2}.
\]
Since $p=b_*^{-1/2}$, this is precisely
\eqref{eq:pup-b}.

For uniqueness, let $b$ be any unitary normalizer of $G$.  By
\eqref{eq:normalizer-inverse}, $G^{-1}=bG^*b=b(X+iY)b$, so
\[
   bYb=\operatorname{Im}\bigl(G^{-1}\bigr)
   :=\frac{1}{2i}\bigl(G^{-1}-(G^{-1})^*\bigr).
\]
Since $Y\succ0$, the right-hand side is positive definite, and the
equation $bYb=N$ with $Y,N\succ0$ has the unique positive definite
solution $b=Y^{-1/2}\bigl(Y^{1/2}NY^{1/2}\bigr)^{1/2}Y^{-1/2}$.
\end{proof}

\begin{remark}
The formula \eqref{eq:pup-b} is the noncommutative analogue of the scalar
identity
\[
   |x-iy|
   =
   y
   \sqrt{
      1+\left(\frac{x}{y}\right)^2
   }.
\]
Thus $b_*^{-1}$ plays the role of the modulus of $G$, but in a form adapted
to symmetric congruence rather than to the ordinary polar decomposition.
\end{remark}

\subsection{Real and imaginary parts of the Dyson equation}

We now combine the unitary-normalizer identity with the Dyson equation.
This elementary observation will be used repeatedly.

\begin{lemma}[Real and imaginary parts of the balanced Dyson equation]
\label{lem:balanced-dyson-parts}
Let
$
   w=x+i\varepsilon,$
   with
   $\varepsilon\geq0,
$
and suppose that
\[
   G:=G(w)
   =
   -u-i\gamma,
   \qquad
   u=u^*,
   \qquad
   \gamma=\gamma^*\succeq0,
\]
is invertible.  Let $b\succ0$ be a unitary normalizer of $G$.  Then
\begin{align}
   (\mathcal C_b+\eta)(u)
   &=
   a_x,
   \label{eq:dyson-real-unbalanced}\\
   (\mathcal C_b-\eta)(\gamma)
   &=
   \varepsilon I,
   \label{eq:dyson-imag-unbalanced}
\end{align}
where
$
   a_x:=a-xI.
$
\end{lemma}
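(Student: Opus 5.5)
The plan is to rewrite the Dyson equation using the normalizer identity \eqref{eq:normalizer-inverse}, and then split it into Hermitian and anti-Hermitian parts. Since $b$ is a unitary normalizer of $G$, we have $G^{-1}=bG^*b$. With $G=-u-i\gamma$ we get $G^*=-u+i\gamma$, and therefore
\[
   G^{-1}=-bub+ib\gamma b=-\mathcal C_b(u)+i\,\mathcal C_b(\gamma).
\]
This is the one substantive step. Everything else is linear bookkeeping, so I do not expect a real obstacle. The only point needing care is that the Dyson equation \eqref{eq:closed-dyson} must hold at $w$ itself. For $\varepsilon>0$ this is the defining property of $G(w)$. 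For $\varepsilon=0$ we read $G(x)$ as a boundary value $G(x+i0)$ that exists and is invertible. In that case we pass to the limit $\varepsilon\downarrow0$ in $G(w)^{-1}+\eta(G(w))=wI-a$, using continuity of inversion at invertible matrices and linearity of $\eta$.

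Next, we use linearity of $\eta$: $\eta(G)=-\eta(u)-i\eta(\gamma)$. Substituting this into the Dyson equation gives
\[
   -\mathcal C_b(u)-\eta(u)+i\bigl(\mathcal C_b(\gamma)-\eta(\gamma)\bigr)=(x-a)I+i\varepsilon I .
\]
Both $\mathcal C_b$ and $\eta$ map Hermitian matrices to Hermitian matrices, because $b$ and the $A_j$ are Hermitian. So $\mathcal C_b(u)+\eta(u)$ and $\mathcal C_b(\gamma)-\eta(\gamma)$ are Hermitian, and the right-hand side has Hermitian part $(x-a)I$ and anti-Hermitian part $i\varepsilon I$. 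The decomposition of a matrix as $H_1+iH_2$ with $H_1,H_2$ Hermitian is unique. Comparing parts, the real part gives $(\mathcal C_b+\eta)(u)=a-xI=a_x$, which is \eqref{eq:dyson-real-unbalanced}. The imaginary part gives $(\mathcal C_b-\eta)(\gamma)=\varepsilon I$, which is \eqref{eq:dyson-imag-unbalanced}.

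The positivity assumption $\gamma\succeq0$ is not needed for the identities themselves. Only Hermitianity of $u$ and $\gamma$ is used; positivity matters only for later interpretation.
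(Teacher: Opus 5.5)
Your proposal is correct and is essentially the paper's proof. You use the normalizer identity $G^{-1}=bG^*b$ to write $G^{-1}=-bub+ib\gamma b$, substitute into the Dyson equation, and compare Hermitian and anti-Hermitian parts. Your justification of the Dyson equation at $\varepsilon=0$, as a limit of boundary values, is a detail the paper leaves implicit.
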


\begin{proof}
Since $b$ is a unitary normalizer,
$
   G^{-1}=bG^*b.
$
Using
\[
   G=-u-i\gamma,
   \qquad
   G^*=-u+i\gamma,
\]
we obtain
\begin{equation}\label{eq:normalizer-parts}
   G^{-1}
   =
   -bub+i\,b\gamma b.
\end{equation}

On the other hand, the Dyson equation gives
\[
   G^{-1}
   =
   wI-a-\eta(G).
\]
Since
$
   wI-a
   =
   -a_x+i\varepsilon I
$
and
$
   \eta(G)
   =
   -\eta(u)-i\eta(\gamma),
$
we have
\begin{equation}\label{eq:dyson-parts}
   G^{-1}
   =
   -a_x+\eta(u)
   +
   i\bigl(
      \varepsilon I+\eta(\gamma)
   \bigr).
\end{equation}

Comparing the real parts of
\eqref{eq:normalizer-parts} and
\eqref{eq:dyson-parts} gives
\[
   bub+\eta(u)=a_x,
\]
which is \eqref{eq:dyson-real-unbalanced}.  Comparing imaginary parts gives
$
   b\gamma b
   =
   \varepsilon I+\eta(\gamma),
$
or equivalently
\[
   (\mathcal C_b-\eta)(\gamma)
   =
   \varepsilon I.
\]
This is \eqref{eq:dyson-imag-unbalanced}.
\end{proof}

\subsection{The balanced Dyson equations}

The preceding identities become especially transparent after passing to
balanced coordinates.

For $b\succ0$ recall
$
   \widehat\eta_b
   =
   \mathcal C_{b^{-1/2}}
   \eta
   \mathcal C_{b^{-1/2}}.
$
For a real spectral parameter $x$, set
\begin{equation}\label{eq:a-tilde-xb}
   \widetilde a_{x,b}
   :=
   b^{-1/2}a_xb^{-1/2}.
\end{equation}

Let
$
   U
   :=
   b^{1/2}Gb^{1/2}.
$
Since $b$ is a unitary normalizer, $U$ is unitary.  Write
\[
   U=P-iQ,
   \qquad
   P=P^*,
   \qquad
   Q=Q^*.
\]
If $\varepsilon>0$, then
$
   -\operatorname{Im}G(w)\succ0,
$
and hence
$
   Q\succ0.
$

\begin{corollary}[Balanced Dyson equations]
\label{cor:balanced-dyson}
With the notation above,
\begin{align}
   (I+\widehat\eta_b)P
   &=
   -\widetilde a_{x,b},
   \label{eq:balanced-real}\\
   (I-\widehat\eta_b)Q
   &=
   \varepsilon b^{-1}.
   \label{eq:balanced-imag}
\end{align}
\end{corollary}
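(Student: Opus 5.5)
The plan is to conjugate the two unbalanced identities of Lemma~\ref{lem:balanced-dyson-parts} by $\mathcal C_{b^{-1/2}}$, which carries $\mathcal C_b$ to the identity and $\eta$ to $\widehat\eta_b$. This is the same congruence $T_b=\mathcal C_{b^{-1/2}}$ used in the proof of Lemma~\ref{lem:feasible-pf}.

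First express $P$ and $Q$ through $u$ and $\gamma$. Since $G=-u-i\gamma$ with $u,\gamma$ Hermitian, we get
\[
   U=b^{1/2}Gb^{1/2}=-b^{1/2}ub^{1/2}-i\,b^{1/2}\gamma b^{1/2}.
\]
Conjugation by the Hermitian matrix $b^{1/2}$ preserves Hermiticity, so uniqueness of the Hermitian/anti-Hermitian decomposition gives
\[
   P=-b^{1/2}ub^{1/2},\qquad Q=b^{1/2}\gamma b^{1/2}.
\]
In operator notation this reads $u=-T_bP$ and $\gamma=T_bQ$.

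Next substitute into \eqref{eq:dyson-real-unbalanced}, which becomes $-(\mathcal C_b+\eta)T_bP=a_x$. Apply $T_b$ on the left and use $T_b\mathcal C_bT_b=I$ and $T_b\eta T_b=\widehat\eta_b$, both noted in the proof of Lemma~\ref{lem:feasible-pf}. This gives
\[
   -(I+\widehat\eta_b)P=T_b a_x=b^{-1/2}a_xb^{-1/2}=\widetilde a_{x,b},
\]
which is \eqref{eq:balanced-real}. In the same way, \eqref{eq:dyson-imag-unbalanced} becomes
\[
   (I-\widehat\eta_b)Q=T_b(\varepsilon I)=\varepsilon\, b^{-1/2}b^{-1/2}=\varepsilon b^{-1},
\]
which is \eqref{eq:balanced-imag}.

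The computation is routine, and there is no real obstacle. The only points needing care are the sign conventions: $P$ carries a minus sign relative to $u$, which accounts for the $-\widetilde a_{x,b}$ on the right of \eqref{eq:balanced-real}. The statement also needs the hypotheses of Lemma~\ref{lem:balanced-dyson-parts}, namely that $G$ is invertible and $b$ is a unitary normalizer. These hold by the standing assumptions of this subsection, and they are used only through that lemma.
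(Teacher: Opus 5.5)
Your proposal is correct and follows the same route as the paper: you read off $P=-b^{1/2}ub^{1/2}$ and $Q=b^{1/2}\gamma b^{1/2}$ from $U=b^{1/2}Gb^{1/2}$, then apply the congruence $\mathcal C_{b^{-1/2}}$ to the two unbalanced identities of Lemma~\ref{lem:balanced-dyson-parts}. Your sign bookkeeping and your computation $\mathcal C_{b^{-1/2}}(\varepsilon I)=\varepsilon b^{-1}$ both match the paper's proof.
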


\begin{proof}
Since
$
   G=-u-i\gamma
$
and
$
   U=b^{1/2}Gb^{1/2}=P-iQ,
$
we have
\begin{equation}\label{eq:PQ-ugamma}
   P
   =
   -b^{1/2}ub^{1/2},
   \qquad
   Q
   =
   b^{1/2}\gamma b^{1/2}.
\end{equation}

Apply the congruence map
$\mathcal C_{b^{-1/2}}$ to
\eqref{eq:dyson-real-unbalanced}.  Since
$
   u
   =
   -b^{-1/2}Pb^{-1/2},
$
we obtain
\[
   -P-\widehat\eta_b(P)
   =
   \widetilde a_{x,b}.
\]
This is \eqref{eq:balanced-real}.

Similarly, applying
$\mathcal C_{b^{-1/2}}$ to
\eqref{eq:dyson-imag-unbalanced} and using
$
   \gamma
   =
   b^{-1/2}Qb^{-1/2}
$
gives
\[
   Q-\widehat\eta_b(Q)
   =
   \varepsilon b^{-1},
\]
which is \eqref{eq:balanced-imag}.
\end{proof}

\begin{remark}[Two stability operators]
\label{rem:two-stability-operators}
The balanced Dyson equations explain the appearance of the two operators
\[
   I+\widehat\eta_b
   \qquad\text{and}\qquad
   I-\widehat\eta_b
\]
in the variational problem.

The real part of the Dyson solution is determined through
$I+\widehat\eta_b$, while its imaginary part is controlled by
$I-\widehat\eta_b$.  In particular, at a real boundary point
$\varepsilon=0$,
\[
   (I-\widehat\eta_b)Q=0.
\]
Thus a nonzero imaginary part of the boundary Dyson solution produces a
nontrivial kernel of the stability operator
$I-\widehat\eta_b$, and hence an active feasibility constraint for the limiting scale $b_z$.
\end{remark}

\section{Regularized variational problem}
\label{sec:regularized}

For every $w=z+i\eps$ with $\eps>0$, the physical Dyson solution
has a strictly negative definite imaginary part and admits a positive
unitary normalizer. The difficulty is that these normalizers need not
converge to a positive definite matrix as $\eps\downarrow0$.
We therefore first solve a variational problem at positive $\eps$,
leaving the real-axis limit to the next section.

The argument compares two constructions. At an arbitrary strictly
feasible scale $b$, quadratic maximization determines a pair
$(u_b,\gamma_b)$. Independently, the Dyson solution determines a
triple $(b_\eps,u_\eps,\gamma_\eps)$. We show that the two pairs
agree when $b=b_\eps$, and that the remaining normalization identity
is exactly the vanishing of the gradient of the reduced objective.

Recall that $a_z=a-z\Id$, $L_b^\pm=\Cop{b}\pm\eta$, and
$\Cop{b}(X)=bXb$. We use the convention
\begin{equation}\label{eq:regularized-K}
\begin{split}
 \KK_{z,\eps}(b,u,\gamma)
 &:={\KK}_z(b,u,\gamma)+\eps\trm\gamma\\
 &=\trm\log b+\ip{a_z}{u}
   -\frac12\ip{u}{L_b^+u}
   -\frac12\ip{\gamma}{L_b^-\gamma}
   +\eps\trm\gamma,
\end{split}
\end{equation}
where $b\succ0$, $u=u^*$, and $\gamma\succeq0$.
Define the reduced objective by
\begin{equation}\label{eq:regularized-F-definition}
 F_{z,\eps}(b)
 :=\sup_{\substack{u=u^*\\\gamma\succeq0}}
       \KK_{z,\eps}(b,u,\gamma),
 \qquad b\succ0.
\end{equation}

\subsection{Quadratic maximization at a fixed scale}

The following proposition does not use the Dyson equation. It describes
the auxiliary variables and the reduced objective at every positive scale.

\begin{proposition}[The regularized partial supremum]
\label{prop:elimination}
Fix $z\in\mathbb R$ and $\eps>0$. For $b\in\Fint$, the unique
maximizers in \eqref{eq:regularized-F-definition} are
\begin{equation}\label{eq:maximizers}
 u_b=(L_b^+)^{-1}a_z,
 \qquad
 \gamma_b=\eps(L_b^-)^{-1}\Id\succ0.
\end{equation}
Their value is
\begin{equation}\label{eq:partial-sup}
\boxed{\begin{aligned}
 F_{z,\eps}(b)
 ={}&\trm\log b
 +\frac12\ip{a_z}{(L_b^+)^{-1}a_z}\\
 &+\frac{\eps^2}{2}\ip{\Id}{(L_b^-)^{-1}\Id}.
\end{aligned}}
\end{equation}
For $b\notin\Fint$, the supremum is $+\infty$. Thus $\Fint$ is
exactly the effective domain of $F_{z,\eps}$.
\end{proposition}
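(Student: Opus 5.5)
The objective $\KK_{z,\eps}(b,u,\gamma)$ splits, for fixed $b$, into $\trm\log b$, a function of $u$ alone, and a function of $\gamma$ alone. The plan is therefore to treat the two suprema separately, exactly as in the proof of Proposition~\ref{prop:indicator}, and to handle the regions $b\in\Fint$ and $b\notin\Fint$ in turn.

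\textbf{Case $b\in\Fint$.} By Lemma~\ref{lem:feasible-pf}, $L_b^+\succ0$ and $L_b^-\succ0$ on Hilbert--Schmidt space. Both maps preserve the real subspace of Hermitian matrices, since $\mathcal C_b$ and $\eta$ are Hermiticity preserving.

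The $u$-part $\ip{a_z}{u}-\frac12\ip{u}{L_b^+u}$ is a strictly concave quadratic on Hermitian matrices. Completing the square gives the unique maximizer $u_b=(L_b^+)^{-1}a_z$, which is Hermitian because $a_z$ is, and the value $\frac12\ip{a_z}{(L_b^+)^{-1}a_z}$.

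For the $\gamma$-part, the unconstrained maximizer over Hermitian $\gamma$ of $\eps\ip{\Id}{\gamma}-\frac12\ip{\gamma}{L_b^-\gamma}$ is $\gamma_b=\eps(L_b^-)^{-1}\Id$, with value $\frac{\eps^2}{2}\ip{\Id}{(L_b^-)^{-1}\Id}$. The key point is that this unconstrained maximizer already lies in the constraint set $\gamma\succeq0$. Then strict concavity makes it the unique maximizer over the cone as well.

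To see $\gamma_b\succ0$, pass to balanced coordinates. Writing $\gamma=b^{-1/2}Qb^{-1/2}$, the equation $L_b^-\gamma=\eps\Id$ becomes $(I-\widehat\eta_b)Q=\eps b^{-1}$, as in Corollary~\ref{cor:balanced-dyson}. Since $\|\widehat\eta_b\|<1$ by \eqref{eq:interior-feasible}, we may expand in a Neumann series:
\[
   Q=\eps\sum_{k\ge0}\widehat\eta_b^{\,k}(b^{-1}).
\]
Each term is positive semidefinite because $\widehat\eta_b$ is completely positive, and the $k=0$ term $\eps b^{-1}$ is positive definite. Hence $Q\succ0$, and so $\gamma_b\succ0$. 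Adding the two values to $\trm\log b$ gives \eqref{eq:partial-sup}.

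\textbf{Case $b\notin\Fint$.} If $b\notin\Fset$, Proposition~\ref{prop:indicator} already gives $+\infty$: along $t\gamma_0$ the quadratic term dominates the linear term $\eps t\,\trm\gamma_0$.

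The main obstacle is the boundary case $b\in\partial\Fset$, where $\|\widehat\eta_b\|=1$ and the quadratic form $\ip{\gamma}{L_b^-\gamma}$ is only semidefinite. Here I would again use the Perron eigenmatrix $P_b\succeq0$ from Lemma~\ref{lem:feasible-pf}, which satisfies $\widehat\eta_b(P_b)=P_b$. Set $\gamma_0=b^{-1/2}P_bb^{-1/2}$. This is positive semidefinite and nonzero, and $\ip{\gamma_0}{L_b^-\gamma_0}=\ip{P_b}{(I-\widehat\eta_b)P_b}=0$. Also $\trm\gamma_0>0$, because $\gamma_0\succeq0$ and $\gamma_0\ne0$.

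Then along the ray $t\gamma_0$, with $u=0$,
\[
   \KK_{z,\eps}(b,0,t\gamma_0)=\trm\log b+\eps t\,\trm\gamma_0\to+\infty .
\]
So the supremum is $+\infty$ on all of $\partial\Fset$. This is precisely where $\eps>0$ is essential, and it shows that $\Fint$ is exactly the effective domain of $F_{z,\eps}$.
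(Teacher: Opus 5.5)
Your proof is correct and follows the paper's argument: complete the square in $u$ and $\gamma$, obtain positivity of $\gamma_b$ from the Neumann series for $(\mathrm{id}-\widehat\eta_b)^{-1}$, and use the Perron ray $t\gamma_0$ with $u=0$ off $\Fint$. The paper writes the positivity bound as $(L_b^-)^{-1}\Id\succeq b^{-2}$, which is your $Q\succeq\eps b^{-1}$ after congruence, and it handles $r=\|\widehat\eta_b\|>1$ and $r=1$ in one computation instead of citing Proposition~\ref{prop:indicator} for the exterior (where the linear term $\eps t\,\trm\gamma_0\ge0$ helps rather than competes).
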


\begin{proof}
For $b\in\Fint$, both $L_b^+$ and $L_b^-$ are positive definite
as Hilbert--Schmidt operators. Completing the square in the two
Hermitian variables gives the unconstrained maximizers
\eqref{eq:maximizers} and the value \eqref{eq:partial-sup}.
It remains to check that the second maximizer is positive definite.

Write
$\widehat\eta_b=\Cop{b^{-1/2}}\eta\Cop{b^{-1/2}}$.
The congruence identities give
\begin{equation}\label{eq:positive-inverse}
 (L_b^-)^{-1}
 =\Cop{b^{-1/2}}
   (\mathrm{id}-\widehat\eta_b)^{-1}
   \Cop{b^{-1/2}},
 \qquad
 (\mathrm{id}-\widehat\eta_b)^{-1}
 =\sum_{k=0}^{\infty}\widehat\eta_b^{\,k}.
\end{equation}
The series converges because $\|\widehat\eta_b\|_{2\to2}<1$.
Each summand is a positive map. On applying the full expression to
$\Id$, its zeroth summand is $b^{-2}$. Consequently,
\begin{equation*}
 (L_b^-)^{-1}\Id\succeq b^{-2}\succ0.
\end{equation*}
Hence $\gamma_b\succ0$: the positivity restriction on $\gamma$
does not change the maximizer or its value.

Now suppose that $b\notin\Fint$. By
Lemma~\ref{lem:feasible-pf}, the map $\widehat\eta_b$ has a
nonzero positive semidefinite eigenmatrix $P$ with
$\widehat\eta_b(P)=rP$, where
$r=\|\widehat\eta_b\|_{2\to2}\geq1$.
Put $\gamma_0=b^{-1/2}Pb^{-1/2}\succeq0$. Then
\begin{equation*}
 \ip{\gamma_0}{L_b^-\gamma_0}
 =(1-r)\|P\|_2^2,
 \qquad \trm\gamma_0>0.
\end{equation*}
Taking $u=0$ and $\gamma=t\gamma_0$ gives
\begin{equation*}
 \KK_{z,\eps}(b,0,t\gamma_0)
 =\trm\log b
   +\frac{t^2}{2}(r-1)\|P\|_2^2
   +\eps t\trm\gamma_0.
\end{equation*}
This tends to $+\infty$ as $t\to\infty$, whether $r>1$ or $r=1$.
\end{proof}

\subsection{The scale supplied by the Dyson solution}

Proposition~\ref{prop:elimination} starts with $b$ and constructs
$(u_b,\gamma_b)$. We now start instead with $G(z+i\eps)$.
The first two identities in the next lemma will identify its real
and imaginary parts with those same maximizing variables.
The third identity will make the gradient vanish.

\begin{lemma}[Unitary normalization and strict feasibility]
\label{lem:Dyson-scale}
Fix $w=z+i\eps$ with $\eps>0$, and write
\begin{equation*}
 G_\eps:=G(z+i\eps)=-u_\eps-i\gamma_\eps,
 \qquad \gamma_\eps\succ0.
\end{equation*}
Let $b_\eps\succ0$ be the unitary normalizer of $G_\eps$ given by
Lemma~\ref{lem:pup}.  It belongs to $\Fint$ and satisfies
\begin{equation}\label{eq:balanced}
\begin{aligned}
 L_{b_\eps}^+u_\eps&=a_z,\\
 L_{b_\eps}^-\gamma_\eps&=\eps\Id,\\
 b_\eps^{-1}&=u_\eps b_\eps u_\eps
                  +\gamma_\eps b_\eps\gamma_\eps.
\end{aligned}
\end{equation}
In particular, the maximizers from
Proposition~\ref{prop:elimination} at $b=b_\eps$ are exactly
\begin{equation}\label{eq:Dyson-auxiliary-identification}
 u_{b_\eps}=u_\eps,
 \qquad \gamma_{b_\eps}=\gamma_\eps.
\end{equation}
\end{lemma}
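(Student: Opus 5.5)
The plan is to take the three identities in \eqref{eq:balanced} directly from what has already been established, and then derive strict feasibility from the second identity using a Perron--Frobenius argument.

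\emph{The identities.} For $\eps>0$, the imaginary part of the Cauchy transform satisfies $-\operatorname{Im}G_\eps\succ0$, so $\gamma_\eps\succ0$ and Lemma~\ref{lem:pup} gives a unitary normalizer $b_\eps$. Lemma~\ref{lem:balanced-dyson-parts} with $x=z$ then yields the first two lines of \eqref{eq:balanced} at once. For the third line, unitarity of $U=b_\eps^{1/2}G_\eps b_\eps^{1/2}$ gives \eqref{eq:normalizer-identities}, namely $G_\eps^*b_\eps G_\eps=G_\eps b_\eps G_\eps^*=b_\eps^{-1}$. We add these two identities with $G_\eps=-u_\eps-i\gamma_\eps$. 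The cross terms $\pm i(u_\eps b_\eps\gamma_\eps-\gamma_\eps b_\eps u_\eps)$ cancel, which leaves $2b_\eps^{-1}=2(u_\eps b_\eps u_\eps+\gamma_\eps b_\eps\gamma_\eps)$. This is the same computation as in the proof of Proposition~\ref{prop:action-identity}.

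\emph{Strict feasibility.} This is the main step. We pass to balanced coordinates as in Corollary~\ref{cor:balanced-dyson}. There $Q=b_\eps^{1/2}\gamma_\eps b_\eps^{1/2}\succ0$ satisfies
\[
(I-\widehat\eta_{b_\eps})Q=\eps b_\eps^{-1}\succ0,
\]
so $\widehat\eta_{b_\eps}(Q)=Q-\eps b_\eps^{-1}\preceq(1-\delta)Q$ for some $\delta\in(0,1)$. This uses $Q\preceq cI$ and $b_\eps^{-1}\succeq c'I$, which give $\eps b_\eps^{-1}\succeq\delta Q$.

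Next, let $P\succeq0$, $P\ne0$, be the Perron eigenmatrix of Lemma~\ref{lem:feasible-pf}, so that $\widehat\eta_{b_\eps}P=rP$ with $r=\|\widehat\eta_{b_\eps}\|$. We pair it against $Q$. By self-adjointness of $\widehat\eta_{b_\eps}$,
\[
r\langle P,Q\rangle=\langle \widehat\eta_{b_\eps}P,Q\rangle=\langle P,\widehat\eta_{b_\eps}Q\rangle\le(1-\delta)\langle P,Q\rangle .
\]
Since $Q\succ0$ and $P\succeq0$ is nonzero, $\langle P,Q\rangle>0$. Dividing gives $r\le1-\delta<1$, and \eqref{eq:interior-feasible} then gives $b_\eps\in\Fint$. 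The step to be careful with is precisely this positivity of $\langle P,Q\rangle$, together with the use of self-adjointness, which together turn the one-sided inequality into a norm bound.

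\emph{Identification.} Once $b_\eps\in\Fint$, both $L^\pm_{b_\eps}$ are invertible by Lemma~\ref{lem:feasible-pf}. The first two lines of \eqref{eq:balanced} then give $u_\eps=(L^+_{b_\eps})^{-1}a_z$ and $\gamma_\eps=\eps(L^-_{b_\eps})^{-1}I$. These are exactly the maximizers in \eqref{eq:maximizers}, which proves \eqref{eq:Dyson-auxiliary-identification}.
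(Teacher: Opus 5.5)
Your proposal is correct and follows essentially the same route as the paper. The first two identities come from comparing real and imaginary parts of the two expressions for $G_\eps^{-1}$ (you cite Lemma~\ref{lem:balanced-dyson-parts}, which the paper re-derives inline). The third comes from adding the two normalizer identities, and strict feasibility from pairing the balanced imaginary equation with the Perron eigenmatrix using self-adjointness of $\widehat\eta_{b_\eps}$.

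The only difference is cosmetic. You first pass to the matrix inequality $\widehat\eta_{b_\eps}(Q)\preceq(1-\delta)Q$ and then pair. The paper pairs the identity $Q-\widehat\eta_{b_\eps}(Q)=\eps b_\eps^{-1}$ directly, obtaining $(1-r)\langle P_0,Q\rangle=\eps\langle P_0,b_\eps^{-1}\rangle>0$, so your auxiliary $\delta$ is unnecessary.
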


\begin{proof}
By Lemma~\ref{lem:pup}, applied with $X=-u_\eps$ and $Y=\gamma_\eps$,
the normalizer $b_\eps$ exists, and by \eqref{eq:normalizer-inverse},
$G_\eps^{-1}=b_\eps G_\eps^*b_\eps$.  The Dyson equation and this identity
give two expressions for the inverse: 
\begin{equation*}
\begin{aligned}
 G_\eps^{-1}
 &=-a_z+\eta(u_\eps)
        +i\bigl(\eps\Id+\eta(\gamma_\eps)\bigr),\\
 G_\eps^{-1}
 &=-b_\eps u_\eps b_\eps+i b_\eps\gamma_\eps b_\eps.
\end{aligned}
\end{equation*}
Comparing real and imaginary parts yields the first two equations in
\eqref{eq:balanced}. Adding
$G_\eps^*b_\eps G_\eps=b_\eps^{-1}$ and
$G_\eps b_\eps G_\eps^*=b_\eps^{-1}$ cancels the mixed terms
and gives the third equation.

To prove strict feasibility, put
$Q=b_\eps^{1/2}\gamma_\eps b_\eps^{1/2}\succ0$.
Conjugating the imaginary equation gives
\begin{equation*}
 Q-\widehat\eta_{b_\eps}(Q)=\eps b_\eps^{-1}.
\end{equation*}
Choose a nonzero $P_0\succeq0$ with
$\widehat\eta_{b_\eps}(P_0)=rP_0$ and
$r=\|\widehat\eta_{b_\eps}\|_{2\to2}$, as in
Lemma~\ref{lem:feasible-pf}. Hilbert--Schmidt self-adjointness gives
\begin{equation*}
\begin{aligned}
 \eps\ip{P_0}{b_\eps^{-1}}
 &=\ip{P_0}{Q}-\ip{P_0}{\widehat\eta_{b_\eps}(Q)}\\
 &=\ip{P_0}{Q}-\ip{\widehat\eta_{b_\eps}(P_0)}{Q}\\
 &=(1-r)\ip{P_0}{Q}.
\end{aligned}
\end{equation*}
Both pairings are strictly positive, since $P_0$ is nonzero and
positive semidefinite, whereas $Q$ and $b_\eps^{-1}$ are positive
definite. Therefore $r<1$, so $b_\eps\in\Fint$.
Now $L_{b_\eps}^\pm$ are invertible. The first two equations of
\eqref{eq:balanced} uniquely determine $u_\eps$ and $\gamma_\eps$
and coincide with \eqref{eq:maximizers}, proving
\eqref{eq:Dyson-auxiliary-identification}.
\end{proof}

\subsection{First variation and convexity}

We next differentiate the \emph{regularized} reduced objective.
In contrast to the first variation of $F_z$, the derivative contains
one contribution from each maximizing variable.

\begin{proposition}[First variation of the regularized objective]
\label{prop:regularized-first-variation}
For $b\in\Fint$, let $u_b$ and $\gamma_b$ be the maximizers in
\eqref{eq:maximizers}. For every Hermitian variation $h$,
\begin{equation}\label{eq:regularized-first-variation}
 DF_{z,\eps}(b)[h]
 =\trm\!\left[
    \bigl(b^{-1}-u_bbu_b-\gamma_bb\gamma_b\bigr)h
   \right].
\end{equation}
Thus, with respect to the normalized Hilbert--Schmidt pairing,
\begin{equation}\label{eq:Freg-gradient}
 \boxed{
 \nabla F_{z,\eps}(b)
 =b^{-1}-u_bbu_b-\gamma_bb\gamma_b.
 }
\end{equation}
\end{proposition}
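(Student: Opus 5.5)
The plan is to differentiate the explicit formula \eqref{eq:partial-sup} term by term, which is legitimate on the open set $\Fint$. There $L_b^\pm\succ0$, so $b\mapsto(L_b^\pm)^{-1}$ is smooth. An alternative is an envelope (Danskin) argument: $F_{z,\eps}(b)=\KK_{z,\eps}(b,u_b,\gamma_b)$, and the maximizers $(u_b,\gamma_b)$ are unique, interior, and depend smoothly on $b$. The partial derivatives of $\KK_{z,\eps}$ in $u$ and $\gamma$ vanish at the maximizers. Here we use Proposition~\ref{prop:elimination}, which says $\gamma_b\succ0$, so the constraint $\gamma\succeq0$ is inactive and the unconstrained stationarity conditions hold. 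Hence the chain rule leaves only the explicit $b$-derivative of $\KK_{z,\eps}$ evaluated at $(b,u_b,\gamma_b)$. I would present the envelope argument and note that it agrees with direct differentiation of \eqref{eq:partial-sup} via $D(L^{-1})[h]=-L^{-1}(DL[h])L^{-1}$.

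The explicit $b$-dependence of $\KK_{z,\eps}$ sits in three places. First, the term $\trm\log b$ has derivative $\trm(b^{-1}h)$. Second, $\mathcal C_b$ enters both quadratic forms, and $\eta$ and the linear terms do not depend on $b$. Since $D\mathcal C_b[h](X)=hXb+bXh$, we get
\[
 \frac{d}{dt}\Big|_{t=0}\ip{u}{\mathcal C_{b+th}u}
 =\trm(uhub+ubuh)=2\,\trm(ubu\,h),
\]
using cyclicity and the Hermitian property of $u,b,h$. The same computation with $\gamma$ gives $2\,\trm(\gamma b\gamma\,h)$. With the factors $-\tfrac12$, the total is $\trm[(b^{-1}-u_bbu_b-\gamma_bb\gamma_b)h]$, which is \eqref{eq:regularized-first-variation}. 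The gradient \eqref{eq:Freg-gradient} follows because the matrix in brackets is Hermitian and the pairing is $\ip{X}{Y}=\trm(X^*Y)$.

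The main point needing care is justifying the envelope step. I would avoid any abstract theorem and instead verify directly. Write $u_{b+th}=u_b+t\dot u+o(t)$, which is smooth because $(L_{b}^+)^{-1}$ depends smoothly on $b$. The first-order variation of $\ip{a_z}{u}-\tfrac12\ip{u}{L_b^+u}$ in the direction $\dot u$ is $\ip{a_z-L_b^+u_b}{\dot u}=0$. The analogous term for $\gamma$ is $\ip{\eps\Id-L_b^-\gamma_b}{\dot\gamma}=0$, again by \eqref{eq:maximizers}. Both cancellations use self-adjointness of $L_b^\pm$ on Hilbert--Schmidt space. Everything else is routine.
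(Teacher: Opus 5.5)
Your proposal is correct and is essentially the paper's argument. The paper takes what you list as the alternative: it differentiates \eqref{eq:partial-sup} directly with $D(L_b^\pm)^{-1}[h]=-(L_b^\pm)^{-1}D\mathcal C_b[h](L_b^\pm)^{-1}$, and after inserting $u_b$ and $\gamma_b$ this yields exactly the explicit $b$-derivative of $\KK_{z,\eps}$ that your envelope computation isolates, closed by the same cyclicity identity $\frac12\ip{v}{hvb+bvh}=\trm(vbv\,h)$ for Hermitian $v$.
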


\begin{proof}
Both shifted operators have the same differential:
\begin{equation*}
 D L_b^\pm[h](Y)=D\Cop{b}[h](Y)=hYb+bYh.
\end{equation*}
Using
$D(L_b^\pm)^{-1}[h]=-(L_b^\pm)^{-1}D\Cop{b}[h](L_b^\pm)^{-1}$
in \eqref{eq:partial-sup}, and the definitions of $u_b$ and
$\gamma_b$, gives
\begin{equation*}
\begin{split}
 DF_{z,\eps}(b)[h]
 ={}&\trm(b^{-1}h)
   -\frac12\ip{u_b}{hu_bb+bu_bh}\\
   &-\frac12\ip{\gamma_b}{h\gamma_bb+b\gamma_bh}.
\end{split}
\end{equation*}
For every Hermitian $v$, cyclicity of the trace gives
$\frac12\ip{v}{hvb+bvh}=\trm(vbv h)$.
Apply this once with $v=u_b$ and once with $v=\gamma_b$ to obtain
\eqref{eq:regularized-first-variation}.
\end{proof}

\begin{corollary}[Geodesic convexity]
\label{cor:Freg-convex}
For every $z\in\mathbb R$ and $\eps>0$, $F_{z,\eps}$ is
geodesically convex on the geodesically convex set $\Fint$.
\end{corollary}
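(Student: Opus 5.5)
The plan is to decompose $F_{z,\eps}$ on $\Fint$, using the closed form \eqref{eq:partial-sup} from Proposition~\ref{prop:elimination}, into three pieces:
\[
   F_{z,\eps}(b)=\trm\log b+q^+_{a_z}(b)+\eps^2\,q^-_{\Id}(b),
\]
where $q^\pm_v(b)=\frac12\ip{v}{(L_b^\pm)^{-1}v}$ are the functions of Corollary~\ref{cor:two-inverse-geodesic}. Geodesic convexity of $F_{z,\eps}$ then reduces to geodesic convexity of each piece along geodesics that stay inside $\Fint$. First I record that $\Fint$ is itself geodesically convex. This follows from Lemma~\ref{lem:geodesic-feasible}: if $b_0,b_1\in\Fint$, then $\|\widehat\eta_{b_0}\|<1$ and $\|\widehat\eta_{b_1}\|<1$, so the log-convexity bound \eqref{eq:eta-geodesic-logconvex} gives $\|\widehat\eta_{b_t}\|<1$ for all $t\in[0,1]$. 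By \eqref{eq:interior-feasible}, the whole geodesic $b_t=b_0\#_tb_1$ therefore lies in $\Fint$.

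Fix such a geodesic and check the three terms one at a time. The term $\trm\log b_t=\trm\log b_0+t\,\trm X$ is affine in $t$, exactly as in the proof of Proposition~\ref{prop:geodesic-hessian}. The term $q^+_{a_z}(b_t)$ is convex in $t$ by the plus case of \eqref{eq:two-inverse-hessian}. The term $\eps^2 q^-_{\Id}(b_t)$ is convex by the minus case, whose middle factor is $(I+\widehat\eta_t)(I-\widehat\eta_t)^{-1}\succ0$ because $\|\widehat\eta_t\|<1$; this is the place where the second inverse term, prepared in Corollary~\ref{cor:two-inverse-geodesic} "for later use", is needed. Since all three functions are smooth on $\Fint$, the sum has nonnegative second derivative along every geodesic segment in $\Fint$, which is geodesic convexity. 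If an explicit formula is wanted, the Hessian is the expression \eqref{eq:geodesic-hessian} plus $\frac{\eps^2}{2}\ip{K_-\Id}{(I+\widehat\eta_b)(I-\widehat\eta_b)^{-1}K_-\Id}$, where $K_-\Id=X\widetilde\gamma+\widetilde\gamma X$ and $\widetilde\gamma=b^{1/2}\gamma_bb^{1/2}/\eps$.

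I expect no real obstacle, since everything needed has already been established. The only point requiring care is that the minus inverse $(L_b^-)^{-1}$ blows up at $\partial\FF$, so convexity can be claimed only on the open set $\Fint$. That is exactly why the statement is restricted to $\Fint$, and why the geodesic convexity of $\Fint$ (rather than just of $\FF$) is used.
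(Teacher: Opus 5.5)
Your proposal is correct and follows the paper's proof. Like the paper, you split \eqref{eq:partial-sup} into the affine term $\trm\log b_t$ plus the two quadratic terms, and obtain their convexity from Corollary~\ref{cor:two-inverse-geodesic} paired with $a_z$ and $\Id$. Your explicit derivation that $\Fint$ is geodesically convex, via the strict bound in Lemma~\ref{lem:geodesic-feasible}, is a step the paper leaves implicit.
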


\begin{proof}
By Corollary~\ref{cor:two-inverse-geodesic}, both
$b\mapsto(L_b^+)^{-1}$ and $b\mapsto(L_b^-)^{-1}$ are convex
in the Hilbert--Schmidt operator order along geodesics in $\Fint$.
Pairing them with the fixed matrices $a_z$ and $\Id$, respectively,
proves convexity of the two quadratic terms in \eqref{eq:partial-sup}.
The remaining term is affine along
$b_t=b_0^{1/2}e^{tX}b_0^{1/2}$:
\begin{equation*}
 \trm\log b_t=\trm\log b_0+t\trm X.
\end{equation*}
\end{proof}

\subsection{Regularized variational identity}

\begin{theorem}[Regularized variational identity]
\label{thm:regularized}
For every $z\in\mathbb R$ and $\eps>0$,
\begin{equation}\label{eq:regularized-theorem}
 \boxed{
 \log\Delta(S_a-z-i\eps)
 =-\frac12+\min_{b\in\Fint}F_{z,\eps}(b).
 }
\end{equation}
The minimum is attained at the scale $b_\eps$ from
Lemma~\ref{lem:Dyson-scale}. Equivalently,
\begin{equation}\label{eq:full-regularized}
 \log\Delta(S_a-z-i\eps)
 =-\frac12+\min_{b\succ0}
    \sup_{\substack{u=u^*\\\gamma\succeq0}}
       \KK_{z,\eps}(b,u,\gamma).
\end{equation}
\end{theorem}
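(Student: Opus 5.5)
The plan is to show that $b_\eps$ is a critical point of the geodesically convex function $F_{z,\eps}$ on the geodesically convex open set $\Fint$, and hence a global minimizer. Then I compute the critical value and identify it with $\log\Delta(S_a-z-i\eps)+\frac12$ via Theorem~\ref{thm:closed}.

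\emph{Step 1 (criticality).} By Lemma~\ref{lem:Dyson-scale}, $b_\eps\in\Fint$ and the maximizers at $b_\eps$ are $u_{b_\eps}=u_\eps$ and $\gamma_{b_\eps}=\gamma_\eps$. The third balanced equation in \eqref{eq:balanced} then says exactly that the gradient \eqref{eq:Freg-gradient} vanishes at $b_\eps$.

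\emph{Step 2 (global minimality).} Take any $b_1\in\Fint$ and the geodesic $b_t=b_\eps\#_tb_1$. It stays in $\Fint$ by Lemma~\ref{lem:geodesic-feasible}. By Corollary~\ref{cor:Freg-convex}, $t\mapsto F_{z,\eps}(b_t)$ is convex on $[0,1]$, and by Step 1 its derivative at $t=0$ is zero. So $F_{z,\eps}(b_1)\ge F_{z,\eps}(b_\eps)$. Outside $\Fint$, Proposition~\ref{prop:elimination} gives $F_{z,\eps}=+\infty$. This proves that the minimum over $\Fint$ equals the minimum of $\sup_{u,\gamma}\KK_{z,\eps}$ over all $b\succ0$, which is \eqref{eq:full-regularized} once the value is identified.

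\emph{Step 3 (value).} By construction,
\[
F_{z,\eps}(b_\eps)=\KK_{z,\eps}(b_\eps,u_\eps,\gamma_\eps).
\]
The triple $(b_\eps,u_\eps,\gamma_\eps)$ is normalized, since $-u_\eps-i\gamma_\eps=G_\eps=b_\eps^{-1/2}Ub_\eps^{-1/2}$. The computation of Proposition~\ref{prop:action-identity}, extended by the term $\eps\trm\gamma$, gives
\[
\KK_{z,\eps}=\Rea\Act_{z+i\eps}(G_\eps)+\tfrac12 .
\]
I will verify this extension directly: $\Rea\tr_m\bigl((z+i\eps-a)M\bigr)$ with $M=-u-i\gamma$ contributes $\langle a_z,u\rangle+\eps\tr_m\gamma$, and the remaining terms match as in that proof because the normalization identity \eqref{eq:normalization} makes the two $\mathcal C_b$ terms sum to $-\tfrac12$. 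Since $G_\eps$ solves the Dyson equation, Section~\ref{sec:dyson-action} gives $\Act_{z+i\eps}(G_\eps)=H(z+i\eps)$. Theorem~\ref{thm:closed} then yields $\Rea H=\log\Delta(S_a-z-i\eps)$, and subtracting $\tfrac12$ gives \eqref{eq:regularized-theorem}.

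\emph{Main obstacle.} The delicate point is the sign and constant bookkeeping in Step 3: checking $\Rea\tr_m\bigl(M\eta(M)\bigr)=\langle u,\eta u\rangle-\langle\gamma,\eta\gamma\rangle$ and tracking the $-1$ in $\Act_w$. I will double-check this on the scalar model of Section~\ref{sec:scalar-model}. Step 2 is routine, but it needs the one-sided derivative along the geodesic to equal the pairing of the gradient with $\dot b_0=b_\eps^{1/2}Xb_\eps^{1/2}$, which holds because $F_{z,\eps}$ is smooth on the open set $\Fint$.
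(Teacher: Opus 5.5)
Your proposal is correct and follows the paper's proof. Both show criticality of $b_\eps$ from $u_{b_\eps}=u_\eps$, $\gamma_{b_\eps}=\gamma_\eps$ and the third balanced equation, then get global minimality from convexity along the geodesic issuing from $b_\eps$, and evaluate the critical value through Theorem~\ref{thm:closed}. The only difference is in Step~3: the paper uses the first two balanced equations to write $F_{z,\eps}(b_\eps)=\trm\log b_\eps+\frac12\ip{a_z}{u_\eps}+\frac{\eps}{2}\trm\gamma_\eps$ and then applies the traced Dyson equation, whereas you use the normalization identity to pass through $\Rea\Act_{z+i\eps}(G_\eps)=\Rea H(z+i\eps)$, the route sketched in Remark~\ref{rem:extended-action}; your bookkeeping there, including $\Rea\trm(M\eta(M))=\ip{u}{\eta(u)}-\ip{\gamma}{\eta(\gamma)}$, the extra $\eps\trm\gamma$ term, and the cancellation of the $-1$, is right.
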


\begin{proof}
\emph{Identification of the maximizing variables and stationarity.}
At the Dyson scale, the first two balanced equations give explicitly
\begin{equation*}
 u_{b_\eps}=(L_{b_\eps}^+)^{-1}a_z=u_\eps,
 \qquad
 \gamma_{b_\eps}=\eps(L_{b_\eps}^-)^{-1}\Id=\gamma_\eps.
\end{equation*}
Substituting these identities into the gradient formula yields
\begin{equation*}
\begin{aligned}
 \nabla F_{z,\eps}(b_\eps)
 &=b_\eps^{-1}-u_\eps b_\eps u_\eps
                    -\gamma_\eps b_\eps\gamma_\eps\\
 &=\underbrace{\bigl(b_\eps^{-1}-u_\eps b_\eps u_\eps\bigr)}
      _{\displaystyle=\,\gamma_\eps b_\eps\gamma_\eps}
      -\gamma_\eps b_\eps\gamma_\eps
 =0.
\end{aligned}
\end{equation*}
Here the underbraced equality is exactly the third balanced equation.
Thus the first two equations identify the inner maximizers, while the
third makes their reduced objective stationary in $b$.

\emph{Global minimality.}
Fix $c\in\Fint$ and take the geodesic
\begin{equation*}
 b_t=b_\eps^{1/2}e^{tX}b_\eps^{1/2},
 \qquad
 X=\log(b_\eps^{-1/2}cb_\eps^{-1/2}),\quad 0\leq t\leq1.
\end{equation*}
It lies in $\Fint$. The function $f(t)=F_{z,\eps}(b_t)$ is convex
by Corollary~\ref{cor:Freg-convex}, and $f'(0)=0$ by stationarity.
Therefore $F_{z,\eps}(c)=f(1)\geq f(0)=F_{z,\eps}(b_\eps)$.
This proves global minimality without requiring compactness of a
minimizing sequence.

\emph{Value at the minimum.}
Write $G=G(z+i\eps)=-u_\eps-i\gamma_\eps$.
At the identified maximizers, the explicit formula for the objective
becomes
\begin{equation*}
 F_{z,\eps}(b_\eps)
 =\trm\log b_\eps
   +\frac12\ip{a_z}{u_\eps}
   +\frac{\eps}{2}\trm\gamma_\eps.
\end{equation*}
Multiplying the Dyson equation by $G$, taking the normalized trace,
and then taking real parts gives
\begin{equation*}
 \ip{a_z}{u_\eps}+\eps\trm\gamma_\eps
 =1+\operatorname{Re}\trm\bigl(G\eta(G)\bigr).
\end{equation*}
Also, unitarity of $b_\eps^{1/2}Gb_\eps^{1/2}$ implies
$\trm\log b_\eps=-m^{-1}\log|\det G|$. Consequently,
\begin{equation*}
\begin{aligned}
 F_{z,\eps}(b_\eps)-\frac12
 &=-\frac1m\log|\det G|
    +\frac12\operatorname{Re}\trm\bigl(G\eta(G)\bigr)\\
 &=\log\Delta(S_a-z-i\eps),
\end{aligned}
\end{equation*}
where the last equality is the closed determinant identity of
Theorem~\ref{thm:closed}. This proves
\eqref{eq:regularized-theorem}.
Finally, $F_{z,\eps}=\sup_{u,\gamma}\KK_{z,\eps}$ by definition,
and Proposition~\ref{prop:elimination} makes this supremum infinite
off $\Fint$. This proves \eqref{eq:full-regularized}.
\end{proof}

\begin{remark}[Relation with the formula of \cite{Yin2026}]
\label{rem:yin-dictionary}
Let $a_0=a_z$ and let $G_\eps=-u_\eps-i\gamma_\eps$ be as in
Lemma~\ref{lem:Dyson-scale}.  At the regularized optimum, the trial matrix
of \cite{Yin2026} is $\gamma_\eps/\eps$ and $Z=u_\eps$, whereas
our scale satisfies
$b_\eps\gamma_\eps b_\eps=\eps\Id+\eta(\gamma_\eps)$ by
\eqref{eq:balanced}.  Thus the two regularized objectives take the same value at their
respective minimizers.  The correspondence
$b_\eps\leftrightarrow\gamma_\eps/\eps$ is a special case of the
bijection described in Section~\ref{sec:further-questions}.
\end{remark}

\section{Real-axis variational identity }
\label{sec:real-axis}

The passage to the real axis uses a simple property of one-sided regularization. The added term is nonnegative and tends to zero at each fixed strictly feasible competitor. Consequently, the regularized infimum cannot fall below the unregularized infimum; conversely, any fixed nearly optimal competitor gives an asymptotically matching upper bound. Thus the optimal values converge without any compactness assumption on the minimizing scales. Radial approximation then allows boundary competitors.

\begin{theorem}[Real-axis variational identity]
\label{thm:real-axis}
For every $z\in\R$,
\begin{equation}\label{eq:real-theorem}
 \boxed{
 \log\Delta(S_a-z)
 =-\frac12+\inf_{b\in\Fset}F_z(b)
 }
\end{equation}
in $[-\infty,\infty)$. 
\end{theorem}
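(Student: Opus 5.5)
We will pass to the limit $\eps\downarrow0$ in the regularized identity of Theorem~\ref{thm:regularized}, comparing optimal values rather than minimizers. Write $V_\eps:=\min_{b\in\Fint}F_{z,\eps}(b)$ and $V:=\inf_{b\in\Fset}F_z(b)$. The left-hand side is handled by the closed determinant identity: as shown in the proof of Theorem~\ref{thm:closed}, $\log\Delta(S_a-z-i\eps)=\int\log\sqrt{(t-z)^2+\eps^2}\,d\mu_{S_a}(t)$ decreases monotonically to $\log\Delta(S_a-z)\in[-\infty,\infty)$. It therefore suffices to prove $V_\eps\to V$.

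\emph{Lower bound.} For $b\in\Fint$ we have $L_b^+\succ0$, so by \eqref{eq:intro-F} and \eqref{eq:partial-sup},
\[
   F_{z,\eps}(b)=F_z(b)+\frac{\eps^2}{2}\ip{\Id}{(L_b^-)^{-1}\Id}\ge F_z(b),
\]
because $(L_b^-)^{-1}\succ0$. Hence $V_\eps\ge\inf_{\Fint}F_z\ge V$ for every $\eps>0$, and so $\liminf_\eps V_\eps\ge V$.

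\emph{Upper bound.} Fix $c\in\Fint$. The regularizing term is then a fixed finite number times $\eps^2$, so $V_\eps\le F_{z,\eps}(c)\to F_z(c)$, which gives $\limsup_\eps V_\eps\le\inf_{\Fint}F_z$. To replace $\Fint$ by $\Fset$ we use Lemma~\ref{lem:radial-approximation}. For any $b\in\Fset$ we have $\lambda b\in\Fint$ and $F_z(\lambda b)\to F_z(b)$ as $\lambda\downarrow1$, in the extended sense. Hence $\inf_{\Fint}F_z\le F_z(b)$, so $\inf_{\Fint}F_z=V$. Combining the two bounds, $V_\eps\to V$ in $[-\infty,\infty)$. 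This also covers $V=-\infty$: if $V=-\infty$, the upper bound forces $V_\eps\to-\infty$.

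\emph{Conclusion.} Equate the limits: $\log\Delta(S_a-z)=\lim(-\tfrac12+V_\eps)=-\tfrac12+V$. The case $-\infty$ occurs if and only if $\int\log|t-z|\,d\mu=-\infty$. The remaining claims of Theorem~\ref{thm:intro-all-points} are a separate matter. One is that this happens exactly at atoms, which needs extra regularity of $\mu_{S_a}$ near non-atoms and presumably comes from the appendix. The other is that $F_z(b_\eps)$ is a minimizing sequence, which follows from $V\le F_z(b_\eps)\le F_{z,\eps}(b_\eps)=V_\eps\to V$.

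The step I expect to be delicate is the lower bound. It requires $F_{z,\eps}\ge F_z$ on all of $\Fint$ and the fact that the minimum in Theorem~\ref{thm:regularized} is over $\Fint$ only, so that no boundary scale is missed. Together with the monotone extended-real handling of the case $V=-\infty$, this is what lets the argument avoid any compactness assumption on $b_\eps$.
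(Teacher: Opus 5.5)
Your proof is correct and follows essentially the same route as the paper. In both, the domination $F_{z,\eps}\ge F_z$ on $\Fint$, together with convergence at fixed strictly feasible competitors, gives convergence of the optimal values without any compactness; the spectral side converges monotonically; and Lemma~\ref{lem:radial-approximation} replaces $\inf_{\Fint}$ by $\inf_{\Fset}$. The only difference is cosmetic: you phrase the limit through $\liminf$ and $\limsup$, whereas the paper observes that $\inf_{\Fint}F_{z,\eps}$ is monotone in $\eps$.
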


\begin{proof}
For each fixed $b\in\Fint$, positivity of $(L_b^-)^{-1}$ as a
Hilbert--Schmidt operator gives
\begin{equation}\label{eq:pointwise-limit}
 \Freg_{z,\eps}(b)
 =F_z(b)+\frac{\eps^2}{2}\ip{\Id}{(L_b^-)^{-1}\Id}
 \downarrow F_z(b)
 \quad(\eps\downarrow0).
\end{equation}
Put $m_\eps=\inf_{\Fint}\Freg_{z,\eps}$ and
$m_0=\inf_{\Fint}F_z$. These infima are bounded above by the value
at a fixed strictly feasible matrix, for instance a sufficiently large
multiple of $\Id$. The numbers $m_\eps$ decrease as
$\eps\downarrow0$ and satisfy $m_\eps\geq m_0$. Conversely, for
every fixed $b\in\Fint$,
\begin{equation*}
 \lim_{\eps\downarrow0}m_\eps
 \leq\lim_{\eps\downarrow0}\Freg_{z,\eps}(b)=F_z(b).
\end{equation*}
Taking the infimum over $b$ proves
\begin{equation}\label{eq:inf-limit}
 \lim_{\eps\downarrow0}m_\eps=m_0,
\end{equation}
including when $m_0=-\infty$. This step uses fixed competitors, not
compactness of minimizing sequences.

On the spectral side,
\begin{equation*}
 \log\Delta(S_a-z-i\eps)
 =\frac12\int_\R\log\bigl((t-z)^2+\eps^2\bigr)\,d\mu(t)
 \downarrow\log\Delta(S_a-z).
\end{equation*}
To justify the limit even when it is $-\infty$, subtract the logarithm
from a fixed upper bound valid for $0<\eps\leq1$ on the compact
support of $\mu$, and apply monotone convergence.

Passing to the limit in Theorem~\ref{thm:regularized}, using
\eqref{eq:inf-limit} and Lemma~\ref{lem:radial-approximation}, proves
\eqref{eq:real-theorem}.
\end{proof}

\begin{corollary}[Characterization of the value $-\infty$]
\label{cor:minus-infinity-atoms}
For every $z\in\mathbb R$,
\begin{equation*}
   \log\Delta(S_a-z)=-\infty
   \quad\Longleftrightarrow\quad
   \inf_{b\in\mathcal F}F_z(b)=-\infty
   \quad\Longleftrightarrow\quad
   \mu_{S_a}(\{z\})>0.
\end{equation*}
In particular, both quantities in the real-axis variational
identity are finite at every non-atomic point.
\end{corollary}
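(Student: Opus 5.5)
The first equivalence is immediate from Theorem~\ref{thm:real-axis}: the two quantities differ by the finite constant $\frac12$, so one equals $-\infty$ exactly when the other does. Everything therefore reduces to the purely measure-theoretic claim that
\[
   \int_{\mathbb R}\log|t-z|\,d\mu(t)=-\infty
   \iff
   \mu(\{z\})>0,
   \qquad \mu=\mu_{S_a}.
\]

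\emph{If $z$ is an atom.} The integrand equals $-\infty$ on a set of positive measure. Since $\mu$ has compact support, the positive part of $\log|t-z|$ is bounded on $\supp\mu$. Hence the integral is $-\infty$. This direction is trivial.

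\emph{If $z$ is not an atom.} I must show that $\int\log|t-z|\,d\mu>-\infty$, and this is the real content. A general compactly supported measure can have infinite logarithmic potential at a non-atom, so some regularity of $\mu_{S_a}$ is needed.

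My plan is to use that the density of $\mu_{S_a}$ is bounded. For this I would invoke the local regularity of the spectral measure proved in Appendix~\ref{app:local-structure}. Alternatively, I would argue that the solution of the matrix Dyson equation satisfies a uniform bound $\sup_{\operatorname{Im}w>0}\|G(w)\|<\infty$ away from atoms, in the spirit of \cite{AjankiErdosKruger2019}.

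If only local information near $z$ is available, it suffices to show $\mu([z-\delta,z+\delta])\le C\delta^{\alpha}$ for some $\alpha>0$. Then the layer-cake formula gives
\[
   \int_{|t-z|<1}\log\frac1{|t-z|}\,d\mu
   =\int_0^\infty\mu\bigl(|t-z|<e^{-s}\bigr)\,ds
   \le\int_0^\infty C e^{-\alpha s}\,ds<\infty .
\]
To obtain this Hölder bound, I would use
\[
   \mu([z-\delta,z+\delta])\le 2\delta\,\operatorname{Im}\bigl(-\tr_m G(z+i\delta)\bigr)
\]
and bound $\|G(z+i\delta)\|\lesssim\delta^{\alpha-1}$. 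That bound would come from the Dyson equation via a polynomial or algebraic behaviour of $G$: the Dyson equation is polynomial, so $\tr_m G$ is an algebraic function. Consequently, its Puiseux expansion at $z$ has a leading term $c(w-z)^{-1}$ only if $z$ is an atom, and otherwise $|G(z+i\delta)|=O(\delta^{\alpha-1})$ with $\alpha>0$.

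This algebraicity and Puiseux step is the main obstacle. First I would check whether the appendix already supplies a statement of the form "$\mu_{S_a}$ is absolutely continuous near $z$ with density $O(|t-z|^{-\beta})$, $\beta<1$, or $z$ is an atom". If it does, the layer-cake estimate above finishes the proof.
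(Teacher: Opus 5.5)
Your reduction to the logarithmic potential and your atom direction match the paper's proof. For non-atoms, the argument you actually carry through is correct: the H\"older mass bound from the Puiseux exponent, followed by the layer-cake formula. It uses the same input as the paper, namely that $\theta(z)<1$ at a non-atom. This comes from Lemmas~\ref{lem:local-puiseux-control} and~\ref{lem:theta-one-atom}, which supply exactly the algebraicity and Puiseux statements you were looking for.

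Where you differ is the finishing step, and yours is more economical. The paper goes through Theorem~\ref{thm:nonatomic-local-regularity}. It uses the half-disc bound at all points $t+i\eps$ near $z$, and shows by Poisson regularization that $\mu$ is absolutely continuous near $z$ with density $\rho\le C|t-z|^{-\theta}\in L^p$ for $p<1/\theta$. It then applies H\"older against $|\log|t-z||^q$. You need only the vertical ray:
\[
\mu([z-\delta,z+\delta])\le 2\delta\,\bigl(-\operatorname{Im}\tr_m G(z+i\delta)\bigr)\le 2C\delta^{1-\theta}
\]
for all $\delta\le1$, after enlarging $C$. After that the layer-cake integral converges. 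You thus bypass absolute continuity, which the corollary does not need. The paper's route yields the stronger local $L^p$ regularity as a by-product.

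You should drop your first two suggestions, because both are false in general. Take $S_0=\left(\begin{smallmatrix}s_1&s_2\\ s_2&0\end{smallmatrix}\right)$. Conjugation by $\operatorname{diag}(1,-1)$ preserves its distribution, which forces $G=\operatorname{diag}(g_1,g_2)$. The Dyson equation then becomes $g_1(g_1-w)^2=w$ and $g_2=(w-g_1)^{-1}$. The branch with negative imaginary parts gives $g_1(i\eps)\approx-i\eps^{1/3}$ and $g_2(i\eps)\approx-i\eps^{-1/3}$. So $0$ is not an atom (here $\theta=1/3$), yet $\|G(i\eps)\|\to\infty$ and the density blows up like $|t|^{-1/3}$. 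The uniform bounds of \cite{AjankiErdosKruger2019} rest on a flatness hypothesis on $\eta$ that fails here.

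Likewise, polynomiality of the Dyson equation does not by itself make $G$ algebraic. The physical branch must lie on a one-dimensional component of the solution variety; the paper imports entrywise algebraicity from \cite{KarginSingularities2026} rather than deriving it.
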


\begin{proof}
By Theorem~\ref{thm:real-axis},
\begin{equation*}
   \inf_{b\in\mathcal F}F_z(b)
   =
   \log\Delta(S_a-z)+\frac12.
\end{equation*}
Thus it suffices to characterize when
\begin{equation*}
   \log\Delta(S_a-z)
   =
   \int_{\mathbb R}\log|t-z|\,d\mu_{S_a}(t)
\end{equation*}
equals $-\infty$. Write $\mu:=\mu_{S_a}$.

Suppose first that $\mu(\{z\})>0$.
The integrand equals $-\infty$ on a set of positive
$\mu$-measure. Its positive part is integrable because
$\mu$ has compact support. Hence
$\log\Delta(S_a-z)=-\infty$.

Conversely, suppose that $\mu(\{z\})=0$.
By Theorem~\ref{thm:nonatomic-local-regularity}, there exist
$0<r<1$, an exponent $1<p<\infty$, and a nonnegative
function $\rho\in L^p(J)$, where $J=(z-r,z+r)$, such that
\begin{equation*}
   d\mu(t)\big|_J=\rho(t)\,dt.
\end{equation*}
Let $q=p/(p-1)$. H\"older's inequality gives
\begin{align*}
   \int_J|\log|t-z||\,d\mu(t)
   &=
   \int_J|\log|t-z||\,\rho(t)\,dt\\
   &\leq
   \|\rho\|_{L^p(J)}
   \left(
      \int_{-r}^{r}|\log|s||^q\,ds
   \right)^{1/q}
   <\infty.
\end{align*}
The last integral is finite since
\begin{equation*}
   \int_{-r}^{r}|\log|s||^q\,ds
   =
   2\int_{\log(1/r)}^\infty y^q e^{-y}\,dy
   <\infty.
\end{equation*}
On $\operatorname{supp}\mu\setminus J$, the function
$t\mapsto|\log|t-z||$ is bounded, because the support is
compact and $|t-z|\geq r$. Therefore
\begin{equation*}
   \int_{\mathbb R}|\log|t-z||\,d\mu(t)<\infty,
\end{equation*}
so $\log\Delta(S_a-z)$ is finite. The variational identity
then gives finiteness of $\inf_{b\in\mathcal F}F_z(b)$
as well.
\end{proof}

\section{Attainment}
\label{sec:attainment}
\begin{proposition}[Attainment at a positive definite cluster point]
\label{prop:attainment-cluster-point}
Fix $z\in\mathbb R$, and let $b_\varepsilon$ be the
regularized Dyson scales. Suppose that, for some sequence
$\varepsilon_k\downarrow0$,
$
   b_{\varepsilon_k}\longrightarrow b_*\succ0.
$
Then $b_*\in\mathcal F$, the infimum
$
   m_z:=\inf_{b\in\mathcal F}F_z(b)
$
is finite, and
$
   F_z(b_*)=m_z.
$

In particular, if $m_z=-\infty$, the regularized scales
have no positive definite cluster point as
$\varepsilon\downarrow0$.
\end{proposition}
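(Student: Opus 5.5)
Feasibility and finiteness come first. Each $b_{\varepsilon_k}$ lies in $\FF^\circ$ by Lemma~\ref{lem:Dyson-scale}, and $\FF$ is closed in the positive cone because $b\mapsto\|\widehat\eta_b\|$ is continuous there (Lemma~\ref{lem:feasible-pf}). Hence $b_*\in\FF$. Write $m_\eps:=F_{z,\eps}(b_\eps)=\min_{\FF^\circ}F_{z,\eps}$. By Theorem~\ref{thm:regularized} and \eqref{eq:inf-limit} in the proof of Theorem~\ref{thm:real-axis}, $m_{\eps_k}\downarrow m_z$, where I use Lemma~\ref{lem:radial-approximation} to identify $\inf_{\FF^\circ}F_z$ with $\inf_{\FF}F_z$. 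It therefore suffices to prove the lower bound
\[
   F_z(b_*)\le\liminf_{k\to\infty}F_{z,\eps_k}(b_{\eps_k}).
\]
Granting this, we get $F_z(b_*)\le m_z\le F_z(b_*)$. Since $F_z(b_*)>-\infty$ (it equals $\trm\log b_*$ plus a nonnegative supremum), $m_z$ is finite and is attained at $b_*$. The final sentence of the proposition is then just the contrapositive.

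For the lower semicontinuity I would not work with the explicit inverse formula \eqref{eq:partial-sup}. At the boundary $L^+_{b_*}$ is still invertible but $L^-_{b_*}$ may not be, and the $\eps^2$-term could misbehave. Instead I use the sup-representation \eqref{eq:intro-F}. Fix any Hermitian $u$. Since $\gamma\succeq0$ is allowed, choosing $\gamma=0$ in \eqref{eq:regularized-F-definition} gives
\[
   F_{z,\eps_k}(b_{\eps_k})\ \ge\ \trm\log b_{\eps_k}+\ip{a_z}{u}-\frac12\ip{u}{L^+_{b_{\eps_k}}u}.
\]
The right-hand side is continuous in $b$ on the positive cone. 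Letting $k\to\infty$ gives
\[
   \liminf_k m_{\eps_k}\ \ge\ \trm\log b_*+\ip{a_z}{u}-\tfrac12\ip{u}{L^+_{b_*}u}.
\]
Taking the supremum over $u$ yields $F_z(b_*)$, because $b_*\in\FF$ and \eqref{eq:intro-F} holds there. In effect $F_z$ restricted to $\FF$ is a supremum of continuous functions, hence lower semicontinuous. The argument works because $b_*\succ0$, which keeps $\trm\log b_{\eps_k}\to\trm\log b_*$ finite.

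The step I expect to need the most care is the bookkeeping that $m_{\eps_k}\to m_z$. This uses $F_{z,\eps}\ge F_z$ on $\FF^\circ$, which follows from \eqref{eq:pointwise-limit}, together with the limit of optimal values, and these were established in Section~\ref{sec:real-axis}. Everything else is the semicontinuity observation above, so no convergence of the auxiliary variables $u_\eps,\gamma_\eps$ is needed.
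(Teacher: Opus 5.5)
Your proof is correct and follows essentially the paper's argument: $b_*\in\mathcal F$ by closedness of $\mathcal F$ in the positive cone, then $F_z(b_*)\le m_z$ by lower semicontinuity of $F_z$ on $\mathcal F$ as a supremum over Hermitian $u$ of functions continuous in $b$, where your single bound (setting $\gamma=0$ in $F_{z,\varepsilon}$) merges the paper's two steps $m_z\le F_z(b_\varepsilon)\le F_{z,\varepsilon}(b_\varepsilon)=m_\varepsilon$. One aside is false, though you never use it: $L^+_{b_*}$ need not be invertible on $\partial\mathcal F$ (take $\eta(X)=AXA$ with $A=\mathrm{diag}(1,-1)$ and $b=I\in\partial\mathcal F$, where $L_I^+$ annihilates $E_{12}+E_{21}$), which is exactly why the paper, like you, avoids the explicit inverse formula and uses the sup-representation instead.
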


\begin{proof}
We first recall why the regularized scales form a minimizing
family for the unregularized objective. Put
\begin{equation*}
   m_\varepsilon
   :=
   \min_{b\in\operatorname{int}\mathcal F}
      F_{z,\varepsilon}(b).
\end{equation*}
The real-axis limiting argument gives
$m_\varepsilon\to m_z$. Since the regularization term is
nonnegative and $b_\varepsilon$ minimizes
$F_{z,\varepsilon}$,
\begin{equation*}
   m_z
   \leq F_z(b_\varepsilon)
   \leq F_{z,\varepsilon}(b_\varepsilon)
   =m_\varepsilon.
\end{equation*}
It follows that
\begin{equation}\label{eq:attainment-minimizing-family}
   F_z(b_\varepsilon)\longrightarrow m_z
   \qquad(\varepsilon\downarrow0),
\end{equation}
also when $m_z=-\infty$.

Next, $b_{\varepsilon_k}\in\mathcal F$ and continuity of
$b\mapsto\mathcal C_b$ give
\begin{equation*}
   \mathcal C_{b_*}-\eta
   =
   \lim_{k\to\infty}
   \bigl(\mathcal C_{b_{\varepsilon_k}}-\eta\bigr)
   \succeq0.
\end{equation*}
Because $b_*\succ0$, this proves $b_*\in\mathcal F$.

We use the supremum definition to pass to the limit in the
objective, without assuming that $L_{b_*}^+$ is invertible.
For every fixed Hermitian matrix $u$, set
\begin{equation*}
   \ell_z(b,u)
   :=
   \operatorname{tr}_m\log b
   +\langle a_z,u\rangle
   -\frac12\langle u,L_b^+u\rangle.
\end{equation*}
The function $b\mapsto\ell_z(b,u)$ is continuous on the
positive definite cone, and
\begin{equation*}
   F_z(b)=\sup_{u=u^*}\ell_z(b,u),
   \qquad b\in\mathcal F.
\end{equation*}
Therefore, for every fixed $u=u^*$,
\begin{equation*}
   \ell_z(b_*,u)
   =
   \lim_{k\to\infty}\ell_z(b_{\varepsilon_k},u)
   \leq
   \liminf_{k\to\infty}F_z(b_{\varepsilon_k}).
\end{equation*}
Taking the supremum over $u$ proves lower semicontinuity
along this sequence:
\begin{equation*}
   F_z(b_*)
   \leq
   \liminf_{k\to\infty}F_z(b_{\varepsilon_k})
   =m_z.
\end{equation*}

Taking $u=0$ in the supremum definition gives
\begin{equation*}
   F_z(b_*)\geq\operatorname{tr}_m\log b_*>-\infty.
\end{equation*}
Hence $m_z=-\infty$ is impossible under the cluster-point
hypothesis. Moreover, $m_z<+\infty$, since $F_z$ is finite
at any strictly feasible matrix. Thus $m_z$ is finite.
Finally, feasibility of $b_*$ gives the reverse inequality
$m_z\leq F_z(b_*)$, and therefore
$
   F_z(b_*)=m_z.
$
\end{proof}

\begin{proposition}[Limit of the regularized normalizers]
\label{prop:normalizer-limit}
Suppose that
$
   G:=G(z+i0)
$
exists.

\begin{enumerate}
\item[(i)]
If
$
   -\operatorname{Im}G\succ0,
$
then
$
   b_\varepsilon\longrightarrow b_z,
$
where $b_z$ is the unitary normalizer obtained by applying the $pUp$
formula of Lemma~\ref{lem:pup} directly to $G$.

\item[(ii)]
Suppose that $z$ belongs to an open component of
$\mathbb R\setminus\supp\mu_{S_a}$.  Then $G(z)=G(z)^*$, and
$
   b_\varepsilon\longrightarrow b_z,
$
where
\begin{equation}\label{eq:gap-normalizer}
   b_z^{-1}
   =
   D^{1/2}
   \left|
      D^{-1/2}G(z)D^{-1/2}
   \right|
   D^{1/2},
   \qquad
   D:=-G'(z)\succ0.
\end{equation}
The matrix $b_z$ is a unitary normalizer of $G(z)$.  If $G(z)$ is
definite, in particular on either unbounded component of the resolvent
set, then
\begin{equation}\label{eq:definite-normalizer}
   b_z=|G(z)|^{-1}.
\end{equation}
\end{enumerate}
\end{proposition}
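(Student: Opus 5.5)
Part (i) is a continuity argument for the explicit $pUp$ formula of Lemma~\ref{lem:pup}. Write $G_\eps=X_\eps-iY_\eps$ with $X_\eps=-u_\eps$ and $Y_\eps=\gamma_\eps$. By hypothesis $G_\eps\to G=X-iY$ with $Y\succ0$. Hence $Y_\eps\succ0$ stays uniformly positive for small $\eps$, so $Y_\eps^{\pm1/2}$ and $T_\eps=Y_\eps^{-1/2}X_\eps Y_\eps^{-1/2}$ converge to $Y^{\pm1/2}$ and $T=Y^{-1/2}XY^{-1/2}$. The map $T\mapsto(I+T^2)^{1/2}$ is continuous, so the right-hand side of \eqref{eq:pup-b} converges to the corresponding expression for $G$, which is positive definite. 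Inversion is continuous on the positive definite cone, so $b_\eps\to b_z$. By uniqueness in Lemma~\ref{lem:pup}, $b_z$ is the normalizer of $G$; alternatively, pass to the limit in $b_\eps^{1/2}G_\eps b_\eps^{1/2}$, using that unitarity is a closed condition.

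Part (ii) is the delicate one, because $\gamma_\eps\to0$ and the $pUp$ formula degenerates. Set $D=-G'(z)=\int(t-z)^{-2}\,dE(t)\succ0$, where $E$ is the matrix spectral measure; $D$ is positive definite because $\mu(\{t\})$ has trace $\int \tr(\cdot)$ and $\supp\mu$ is nonempty. Off the support, $G$ is real analytic, $G(z)=G(z)^*$, and $\gamma_\eps=-\operatorname{Im}G(z+i\eps)=\eps D+O(\eps^2)$, while $-u_\eps=\operatorname{Re}G(z+i\eps)=G(z)+O(\eps^2)$. Substituting into Lemma~\ref{lem:pup} gives $Y_\eps=\eps D(1+O(\eps))$ and $T_\eps=\eps^{-1}\bigl(D^{-1/2}G(z)D^{-1/2}+O(\eps)\bigr)$. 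Write $T_0:=D^{-1/2}G(z)D^{-1/2}$, which is invertible since $G(z)$ is (Lemma~\ref{lem:boundary-invertible}). Then $(I+T_\eps^2)^{1/2}=\eps^{-1}(\eps^2I+(\eps T_\eps)^2)^{1/2}\to\eps^{-1}|T_0|$ after factoring. The factors of $\eps$ cancel against $Y_\eps^{1/2}\sim\eps^{1/2}D^{1/2}$, giving $b_\eps^{-1}\to D^{1/2}|T_0|D^{1/2}$, which is \eqref{eq:gap-normalizer}.

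The main obstacle is making this cancellation rigorous, because $Y_\eps^{1/2}$ and $D^{1/2}$ do not commute with the error terms. To handle it, I would write $Y_\eps^{-1/2}=\eps^{-1/2}(D+O(\eps))^{-1/2}$ and use that the positive square root is smooth near $D\succ0$. Then $\eps T_\eps=T_0+O(\eps)$ with $T_0$ invertible, so $(\eps^2I+(\eps T_\eps)^2)^{1/2}\to|T_0|$ by continuity of $S\mapsto(S^2)^{1/2}=|S|$ on Hermitian matrices. Hence
\[
b_\eps^{-1}=\eps^{1/2}(D+O(\eps))^{1/2}\,\eps^{-1}\bigl(|T_0|+o(1)\bigr)\,\eps^{1/2}(D+O(\eps))^{1/2}\to D^{1/2}|T_0|D^{1/2}\succ0.
\]
The limit is unitary-normalizing because $b_\eps^{1/2}G_\eps b_\eps^{1/2}$ is unitary for every $\eps$ and converges to $b_z^{1/2}G(z)b_z^{1/2}$.

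Finally, suppose $G(z)$ is definite, say $\pm G(z)\succ0$. This holds on the unbounded components, since there $G(z)=\int(z-t)^{-1}dE(t)$ is positive for $z>\sup\supp\mu$ and negative for $z<\inf\supp\mu$. Then $|T_0|=\pm T_0$, so $b_z^{-1}=\pm G(z)=|G(z)|$, which is \eqref{eq:definite-normalizer}. Equivalently, $|G(z)|^{-1}$ is a normalizer by the remark after Definition~\ref{def:unitary-normalizer}.
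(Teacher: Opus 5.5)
Your proposal is correct and follows the paper's proof essentially step for step: continuity of the $pUp$ formula in (i), and in (ii) the rescaling $Y_\eps=\eps D_\eps$ with $D_\eps\to D$, which turns the $pUp$ formula into $b_\eps^{-1}=D_\eps^{1/2}\bigl(\eps^2I+(\eps T_\eps)^2\bigr)^{1/2}D_\eps^{1/2}\to D^{1/2}|T_0|D^{1/2}$. The one difference is that you obtain the normalizer property of $b_z$ from closedness of $U(m)$ rather than from the paper's explicit factorization $G(z)=b_z^{-1/2}(VJV^*)b_z^{-1/2}$, and both arguments work.

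One justification needs repair. Nonemptiness of $\supp\mu$ only gives $\tr_m D>0$. Positive definiteness $D\succ0$ instead follows from $E(\mathbb R)=I$, which gives $\langle\xi,D\xi\rangle=\int(t-z)^{-2}\,d\langle\xi,E(t)\xi\rangle>0$ for every $\xi\neq0$. The same normalization is what makes $G(z)$ definite on the unbounded components.
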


The proof is given in Appendix~\ref{app:normalizer-limits}.

\begin{corollary}[Attainment at nondegenerate regular points]
\label{cor:regularized-value-limit}
Fix $z\in\mathbb R$. Suppose that the finite boundary value
\begin{equation*}
   G:=G(z+i0)
   =
   \lim_{\varepsilon\downarrow0}G(z+i\varepsilon)
\end{equation*}
exists, and that either $-\operatorname{Im}G\succ0$ or $z$
belongs to an open component of
$\mathbb R\setminus\operatorname{supp}\mu_{S_a}$.

Let $b_z$ be the limiting scale from
Proposition~\ref{prop:normalizer-limit}.
Then $b_z\in\mathcal F$, the infimum of $F_z$ on
$\mathcal F$ is finite and attained at $b_z$, and
\begin{equation}\label{eq:value-limiting-scale}
   F_z(b_z)
   =
   \lim_{\varepsilon\downarrow0}F_z(b_\varepsilon)
   =
   \log\Delta(S_a-z)+\frac12.
\end{equation}
\end{corollary}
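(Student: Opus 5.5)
This corollary follows by combining Proposition~\ref{prop:normalizer-limit} with Proposition~\ref{prop:attainment-cluster-point}. The hypotheses are exactly the two cases of Proposition~\ref{prop:normalizer-limit}. In case (i), $-\operatorname{Im}G\succ0$, and the regularized scales $b_\eps$ converge to the $pUp$ normalizer $b_z$ of $G$. In case (ii), $z$ lies in a resolvent gap, and $b_\eps\to b_z$ with $b_z$ given by \eqref{eq:gap-normalizer}. In both cases $b_z\succ0$: in case (i) it is defined by \eqref{eq:pup-b} from a positive definite $Y$, and in case (ii) $b_z^{-1}$ is a congruence of $|D^{-1/2}G(z)D^{-1/2}|$. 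The latter is positive definite because $G(z)$ is invertible (Lemma~\ref{lem:boundary-invertible}) and $D\succ0$. So the full family, and in particular every sequence $\eps_k\downarrow0$, has the positive definite limit $b_*=b_z$.

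Proposition~\ref{prop:attainment-cluster-point} then applies directly. It gives $b_z\in\mathcal F$, finiteness of $m_z=\inf_{\mathcal F}F_z$, and $F_z(b_z)=m_z$. By Theorem~\ref{thm:real-axis}, $m_z=\log\Delta(S_a-z)+\frac12$, which gives the second equality in \eqref{eq:value-limiting-scale}.

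For the first equality, $F_z(b_\eps)\to m_z$ along the whole family $\eps\downarrow0$. This is \eqref{eq:attainment-minimizing-family}, established in the proof of Proposition~\ref{prop:attainment-cluster-point} from the sandwich $m_z\le F_z(b_\eps)\le F_{z,\eps}(b_\eps)=m_\eps\to m_z$. Combined with $F_z(b_z)=m_z$, this gives $\lim_{\eps\downarrow0}F_z(b_\eps)=F_z(b_z)$. The statement does not claim continuity of $F_z$ at $b_z$, which could fail on $\partial\mathcal F$ where $L_{b_z}^+$ might degenerate. The equality follows only because both sides equal $m_z$.

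The only substantive input is the convergence of the normalizers in Proposition~\ref{prop:normalizer-limit}, deferred to Appendix~\ref{app:normalizer-limits}. The one step needing care here is checking that the limit is strictly positive definite rather than merely positive semidefinite, since Proposition~\ref{prop:attainment-cluster-point} requires $b_*\succ0$. I would verify this from the explicit formulas as above, using invertibility of the boundary value.
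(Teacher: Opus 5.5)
Your proposal is correct and follows the paper's proof: Proposition~\ref{prop:normalizer-limit} gives $b_\varepsilon\to b_z\succ0$, Proposition~\ref{prop:attainment-cluster-point} then yields $b_z\in\mathcal F$, finiteness and attainment of the infimum, and $F_z(b_\varepsilon)\to m_z$, and Theorem~\ref{thm:real-axis} identifies $m_z$ with $\log\Delta(S_a-z)+\frac12$. Your explicit check that $b_z$ is positive definite rather than merely semidefinite ($Y\succ0$ in case (i); invertibility of $G(z)$ together with $D\succ0$ in case (ii)) is a detail the paper leaves implicit, and it is exactly right.
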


\begin{proof}
By Proposition~\ref{prop:normalizer-limit},
$b_\varepsilon\to b_z\succ0$. Hence
Proposition~\ref{prop:attainment-cluster-point} gives
$b_z\in\mathcal F$ and
\begin{equation*}
   F_z(b_z)=\inf_{b\in\mathcal F}F_z(b)\in\mathbb R.
\end{equation*}
The regularized scales form a minimizing family for $F_z$,
so $F_z(b_\varepsilon)$ converges to this infimum.
Finally, Theorem~\ref{thm:real-axis} identifies the infimum
with $\log\Delta(S_a-z)+\frac12$, proving
\eqref{eq:value-limiting-scale}.
\end{proof}

The balanced Dyson equations also locate the minimizer relative to the
boundary of the feasible set.

\begin{proposition}[Location of the minimizer]
\label{prop:active-set}
Fix $z\in\mathbb R$ and let $b_\varepsilon$ be the regularized scales.
\begin{enumerate}
\item[(i)]
Suppose that $G:=G(z+i0)$ exists, that
$\operatorname{Im}G\neq0$, and that
$b_{\varepsilon_k}\to b_*\succ0$ for some sequence
$\varepsilon_k\downarrow0$.  Then $b_*\in\partial\mathcal F$.  In
particular, if $-\operatorname{Im}G\succ0$, then
$b_z\in\partial\mathcal F$.

\item[(ii)]
Suppose that $z$ belongs to an open component of
$\mathbb R\setminus\supp\mu_{S_a}$.  Then
$b_z\in\operatorname{int}\mathcal F$, and
\begin{equation}\label{eq:gap-stability}
   \bigl(\mathcal C_{b_z}-\eta\bigr)(D)=I,
   \qquad
   D=-G'(z)\succ0.
\end{equation}
\end{enumerate}
Consequently, at a nondegenerate regular point $z$,
\begin{equation*}
   b_z\in\partial\mathcal F
   \quad\Longleftrightarrow\quad
   z\in\supp\mu_{S_a}.
\end{equation*}
\end{proposition}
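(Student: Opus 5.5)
For (i), I would pass to the limit in the balanced imaginary equation and then argue with a Perron--Frobenius pairing. The second identity in \eqref{eq:balanced} is $(\mathcal C_{b_\varepsilon}-\eta)(\gamma_\varepsilon)=\varepsilon I$. Along the sequence $\varepsilon_k$ we have $b_{\varepsilon_k}\to b_*\succ0$, and $\gamma_{\varepsilon_k}=-\operatorname{Im}G(z+i\varepsilon_k)\to\gamma:=-\operatorname{Im}G\succeq0$, with $\gamma\neq0$ by hypothesis. Continuity of $b\mapsto\mathcal C_b$ then gives
\[
   (\mathcal C_{b_*}-\eta)(\gamma)=0 .
\]
By Proposition~\ref{prop:attainment-cluster-point} (or by closedness of $\mathcal F$) we already know $b_*\in\mathcal F$. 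Suppose, for contradiction, that $b_*\in\operatorname{int}\mathcal F$. Then Lemma~\ref{lem:feasible-pf} gives $L_{b_*}^-\succ0$, so $L_{b_*}^-$ is injective and $\gamma=0$, a contradiction. Hence $b_*\in\partial\mathcal F$. Equivalently, in balanced coordinates $Q_*=b_*^{1/2}\gamma b_*^{1/2}$ is a nonzero fixed point of $\widehat\eta_{b_*}$, so $\|\widehat\eta_{b_*}\|\ge1$, and \eqref{eq:interior-feasible} gives the same conclusion. The special case $-\operatorname{Im}G\succ0$ follows by taking $b_*=b_z$, which exists by Proposition~\ref{prop:normalizer-limit}(i).

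For (ii), Proposition~\ref{prop:normalizer-limit}(ii) gives $b_\varepsilon\to b_z\succ0$. I would then differentiate the imaginary equation in $\varepsilon$ at $\varepsilon=0$. In a gap $G$ is real analytic, and $\gamma_\varepsilon=-\operatorname{Im}G(z+i\varepsilon)=\varepsilon D+O(\varepsilon^2)$ with $D=-G'(z)\succ0$; this holds because $G'(z)=-\int(z-t)^{-2}$ against the matrix spectral measure, which is negative definite since the total mass is $I$. Dividing $(\mathcal C_{b_\varepsilon}-\eta)(\gamma_\varepsilon)=\varepsilon I$ by $\varepsilon$ and letting $\varepsilon\downarrow0$ yields \eqref{eq:gap-stability}. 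For strict feasibility I would repeat the pairing from Lemma~\ref{lem:Dyson-scale}. Take the Perron eigenmatrix $P_0\succeq0$ of $\widehat\eta_{b_z}$ with eigenvalue $r$, and set $Q=b_z^{1/2}Db_z^{1/2}\succ0$. Conjugating \eqref{eq:gap-stability} gives $(I-\widehat\eta_{b_z})Q=b_z^{-1}$, so
\[
   \langle P_0,b_z^{-1}\rangle=(1-r)\langle P_0,Q\rangle .
\]
Both pairings are strictly positive, so $r<1$, and $b_z\in\operatorname{int}\mathcal F$ by \eqref{eq:interior-feasible}.

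For the final equivalence at a nondegenerate regular point, there are two cases. If $-\operatorname{Im}G(z+i0)\succ0$, then $z\in\supp\mu_{S_a}$, because the scalar density $-\pi^{-1}\operatorname{tr}_m\operatorname{Im}G(z+i\cdot)$ has a positive boundary value and so is positive on a neighborhood of $z$; in this case (i) gives $b_z\in\partial\mathcal F$. Otherwise $z$ lies in an open gap, so $z\notin\supp\mu_{S_a}$, and (ii) gives $b_z\in\operatorname{int}\mathcal F$. The two cases are exclusive, since in a gap $\operatorname{Im}G=0$.

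The step I expect to be the main obstacle is the expansion $\gamma_\varepsilon/\varepsilon\to D$ in (ii), together with matching the limiting $b_z$ to the one in \eqref{eq:gap-normalizer}. I would justify the expansion via analyticity of $G$ across the gap, using $G(\bar w)=G(w)^*$. The matching of $b_z$ is simply assumed from Proposition~\ref{prop:normalizer-limit}.
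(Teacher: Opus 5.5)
Your proposal is correct and follows the paper's proof essentially step for step: in (i) you take the limit of the imaginary balanced equation and combine it with $b_*\in\mathcal F$, and in (ii) you take the limit $\varepsilon^{-1}\gamma_\varepsilon\to D$ and then use the same Perron--Frobenius pairing as in Lemma~\ref{lem:Dyson-scale}. The claim that the density is positive on a neighborhood of $z$ is not justified as stated, but you do not need it: your closing remark that $\operatorname{Im}G(z+i0)=0$ in any gap already shows that $-\operatorname{Im}G(z+i0)\succ0$ forces $z\in\supp\mu_{S_a}$.
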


\begin{proof}
Write $G(z+i\varepsilon)=-u_\varepsilon-i\gamma_\varepsilon$ with
$\gamma_\varepsilon\succ0$.  By Lemma~\ref{lem:Dyson-scale},
\begin{equation}\label{eq:active-set-imag}
   \bigl(\mathcal C_{b_\varepsilon}-\eta\bigr)(\gamma_\varepsilon)
   =\varepsilon I
   \qquad(\varepsilon>0).
\end{equation}

\smallskip
\noindent\emph{Proof of (i).}
Put $\gamma:=-\operatorname{Im}G\succeq0$, so that
$\gamma_{\varepsilon_k}\to\gamma\neq0$.  Passing to the limit in
\eqref{eq:active-set-imag} along $\varepsilon_k$ gives
$
   \bigl(\mathcal C_{b_*}-\eta\bigr)(\gamma)=0.
$
Applying $\mathcal C_{b_*^{-1/2}}$ and writing
$Q:=b_*^{1/2}\gamma\,b_*^{1/2}\neq0$, we obtain
$\widehat\eta_{b_*}(Q)=Q$.  Hence $\|\widehat\eta_{b_*}\|\geq1$.  On the
other hand, $b_*\in\mathcal F$ by
Proposition~\ref{prop:attainment-cluster-point}, so
$\|\widehat\eta_{b_*}\|\leq1$ by Lemma~\ref{lem:feasible-pf}.  Thus
$\|\widehat\eta_{b_*}\|=1$, and \eqref{eq:interior-feasible} shows that
$b_*$ is not an interior point of $\mathcal F$.  If
$-\operatorname{Im}G\succ0$, then $b_\varepsilon\to b_z$ by
Proposition~\ref{prop:normalizer-limit}(i), and the first statement
applies with $b_*=b_z$.

\smallskip
\noindent\emph{Proof of (ii).}
In a gap, $b_\varepsilon\to b_z\succ0$ by
Proposition~\ref{prop:normalizer-limit}(ii), and by
\eqref{eq:app-Y-expansion},
\begin{equation*}
   \varepsilon^{-1}\gamma_\varepsilon\longrightarrow D=-G'(z)\succ0.
\end{equation*}
Dividing \eqref{eq:active-set-imag} by $\varepsilon$ and letting
$\varepsilon\downarrow0$ gives \eqref{eq:gap-stability}.  Apply
$\mathcal C_{b_z^{-1/2}}$ and put
$Q':=b_z^{1/2}D\,b_z^{1/2}\succ0$:
\begin{equation*}
   Q'-\widehat\eta_{b_z}(Q')=b_z^{-1}.
\end{equation*}
Now argue as in the proof of Lemma~\ref{lem:Dyson-scale}.  Choose a
nonzero $P_0\succeq0$ with $\widehat\eta_{b_z}(P_0)=rP_0$ and
$r=\|\widehat\eta_{b_z}\|$, as in Lemma~\ref{lem:feasible-pf}.
Hilbert--Schmidt self-adjointness of $\widehat\eta_{b_z}$ gives
\begin{equation*}
   (1-r)\langle P_0,Q'\rangle
   =\langle P_0,Q'\rangle-\langle\widehat\eta_{b_z}(P_0),Q'\rangle
   =\langle P_0,b_z^{-1}\rangle.
\end{equation*}
Both pairings are strictly positive, because $P_0$ is nonzero and
positive semidefinite while $Q'$ and $b_z^{-1}$ are positive definite.
Therefore $r<1$, and $b_z\in\operatorname{int}\mathcal F$ by
\eqref{eq:interior-feasible}.

\smallskip
\noindent\emph{The equivalence.}
At a nondegenerate regular point either
$-\operatorname{Im}G(z+i0)\succ0$, in which case $z\in\supp\mu_{S_a}$
and $b_z\in\partial\mathcal F$ by (i), or $z$ lies in a gap, in which
case $b_z\in\operatorname{int}\mathcal F$ by (ii).
\end{proof}

\begin{remark}
\label{rem:active-set}
Part~(i) requires only $\operatorname{Im}G(z+i0)\neq0$.  Hence at a
degenerate regular point every positive definite cluster point of the
regularized scales lies on $\partial\mathcal F$.  The identity
\eqref{eq:gap-stability} is the first-order term in $\varepsilon$ of
the imaginary Dyson equation \eqref{eq:dyson-imag-unbalanced}; in a gap
the zeroth-order term vanishes identically, and it is the derivative
$D=-G'(z)$ that certifies strict feasibility.
\end{remark}

We record a general fact.

\begin{lemma}[Boundary values are invertible]
\label{lem:boundary-invertible}
Let $T$ be a self-adjoint matrix semicircle with mean $a'$ and covariance
map $\kappa$, and let $y\in\mathbb R$ be a point at which the boundary
value $G_T(y+i0)$ exists.  Then $G_T(y+i0)$ is invertible.
\end{lemma}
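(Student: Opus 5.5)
The plan is to use the Dyson equation itself to bound $G_\varepsilon:=G_T(y+i\varepsilon)$ away from singularity, uniformly as $\varepsilon\downarrow0$. Write $G_0:=G_T(y+i0)$. The Dyson equation for $T$ gives
\[
   G_\varepsilon^{-1}=(y+i\varepsilon)I-a'-\kappa(G_\varepsilon),
   \qquad\varepsilon>0 .
\]
Because the boundary value exists, $G_\varepsilon\to G_0$. Continuity of the linear map $\kappa$ then shows that the right-hand side converges to
\[
   N:=yI-a'-\kappa(G_0).
\]
So $G_\varepsilon^{-1}$ converges to the finite matrix $N$. In particular $\sup_{\varepsilon\le1}\|G_\varepsilon^{-1}\|<\infty$.

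Next, pass to the limit in the identity $G_\varepsilon G_\varepsilon^{-1}=I$. Both factors converge, since $G_\varepsilon\to G_0$ and $G_\varepsilon^{-1}\to N$. Hence $G_0N=I$, so $G_0$ is invertible with inverse $N$. As a byproduct, the Dyson equation $G_0^{-1}+\kappa(G_0)=yI-a'$ also holds at the boundary.

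The only point needing care is that the limit $G_0$ is genuinely finite. I take this to be part of what "the boundary value exists" means: the corollary above speaks of "the finite boundary value". If one allowed only a weaker notion, such as a limit along a subsequence, the same argument would still apply along that subsequence. No obstacle arises beyond this, because the Dyson equation expresses $G_\varepsilon^{-1}$ as an affine function of $G_\varepsilon$. Boundedness of $G_\varepsilon$ therefore forces boundedness of its inverse, and that is what rules out any eigenvalue of $G_\varepsilon$ tending to zero.
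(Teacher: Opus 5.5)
Your proposal is correct and is essentially the paper's proof: the Dyson equation makes $G_\varepsilon^{-1}$ an affine function of $G_\varepsilon$, so it converges to $yI-a'-\kappa(G_0)$, and passing to the limit in $G_\varepsilon G_\varepsilon^{-1}=I$ gives invertibility. Your side remarks, that the Dyson equation persists at the boundary and that the argument also works along a subsequence, are harmless additions.
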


\begin{proof}
By the Dyson equation,
$G_T(w)^{-1}=wI-a'-\kappa\bigl(G_T(w)\bigr)$, and the right-hand side
converges as $w\to y$.  Hence $G_T(w)^{-1}$ converges, and passing to the
limit in $G_T(w)G_T(w)^{-1}=I$ shows that $G_T(y+i0)$ is invertible.
\end{proof}

Call $x\in\mathbb R$ a \emph{degenerate regular point} if $G(x+i0)$
exists and $-\operatorname{Im}G(x+i0)$ is nonzero but singular.  

\begin{remark}[Attainment and the behavior of the regularized scales]
\label{rem:attainment}
Corollary~\ref{cor:regularized-value-limit} proves attainment
at nondegenerate regular points. More generally,
Proposition~\ref{prop:attainment-cluster-point} shows that
any positive definite cluster point of the regularized
scales is a minimizer. We explain what this criterion gives
in the different boundary regimes.

\smallskip
\noindent\emph{Finite boundary values.}
Suppose that the finite limit $G:=G(x+i0)$ exists.
By Lemma~\ref{lem:boundary-invertible}, $G$ is invertible.
The unitary factorization
\begin{equation*}
   G(x+i\varepsilon)
   =
   b_\varepsilon^{-1/2}
   U_\varepsilon
   b_\varepsilon^{-1/2}
\end{equation*}
therefore gives
\begin{equation*}
   \det b_\varepsilon
   =
   |\det G(x+i\varepsilon)|^{-1}
   \longrightarrow
   |\det G|^{-1}\in(0,\infty).
\end{equation*}

Consequently, an upper bound on the scales along a sequence
also gives a uniform positive lower bound on their eigenvalues.
Indeed, suppose that
$\|b_{\varepsilon_k}\|\leq M$ for some
$\varepsilon_k\downarrow0$. For all sufficiently large $k$,
we have $\det b_{\varepsilon_k}\geq d$ for some $d>0$, and hence
\begin{equation*}
   \lambda_{\min}(b_{\varepsilon_k})
   \geq
   \frac{\det b_{\varepsilon_k}}
        {\|b_{\varepsilon_k}\|^{m-1}}
   \geq
   \frac{d}{M^{m-1}}>0.
\end{equation*}
Thus these scales lie in a compact subset of the positive
definite cone. A further subsequence converges to
$b_*\succ0$, and
Proposition~\ref{prop:attainment-cluster-point} gives
\begin{equation*}
   b_*\in\mathcal F,
   \qquad
   F_x(b_*)
   =
   \inf_{b\in\mathcal F}F_x(b)
   =
   \log\Delta(S_a-x)+\frac12.
\end{equation*}

Therefore, whenever $G(x+i0)$ exists as a finite matrix,
boundedness of the regularized scales along just one sequence
is sufficient for attainment. Convergence of the whole family
is not required. We do not know whether such a bounded sequence
always exists, in particular at degenerate regular points
and spectral edges. In Example~\ref{exa:coupled}, numerical
computations suggest convergence of the normalizers.

\smallskip
\noindent\emph{Singular boundary values.}
If $G(x+i0)$ does not exist as a finite matrix, the Puiseux
expansion in Lemma~\ref{lem:local-puiseux-control} gives
$
   \|G(x+i\varepsilon)\|\longrightarrow\infty.
$
The unitary factorization implies
\begin{equation*}
   \|G(x+i\varepsilon)\|
   \leq
   \|b_\varepsilon^{-1}\|,
\end{equation*}
and therefore
\begin{equation*}
   \lambda_{\min}(b_\varepsilon)
   =
   \|b_\varepsilon^{-1}\|^{-1}
   \leq
   \|G(x+i\varepsilon)\|^{-1}
   \longrightarrow0.
\end{equation*}
Thus the regularized scales leave every compact subset of the
positive definite cone, and the cluster-point criterion does
not apply. This escape does not by itself rule out attainment
by some other feasible matrix: the criterion is sufficient,
not an established characterization of attainment.

\smallskip
\noindent\emph{Non-attainment.}
At an atom, the infimum is $-\infty$ and cannot be attained,
because every $b\in\mathcal F$ satisfies
\begin{equation*}
   F_x(b)
   \geq
   \operatorname{tr}_m\log b
   >
   -\infty.
\end{equation*}
At general non-atomic singular points, the preceding
compactness argument leaves attainment unresolved.

The centered case provides a more precise conclusion.
For $a=0$ and $x=0$,
Theorem~\ref{thm:centered-capacity} states that the infimum
is attained if and only if $\eta$ is symmetrically
doubly-stochastic scalable. In particular, when
$\operatorname{Cap}(\eta)>0$ but $\eta$ is not symmetrically
doubly-stochastic scalable, the infimum is finite but is
not attained.
\end{remark}

\begin{exa}[Degenerate points filling the support]
\label{exa:coupled}
Let $m=2$, $r=1$, $A_1=E_{11}$ and
$a=\left(\begin{smallmatrix}0&c\\c&d\end{smallmatrix}\right)$ with $c\neq0$,
so that
\[
   S_a=\begin{pmatrix}s&c\\ c&d\end{pmatrix}.
\]
The covariance pencil is reducible, but the mean couples the two
coordinates.  All entries commute, so
$S_a=\lambda_+(s)P_+(s)+\lambda_-(s)P_-(s)$ with
$\lambda_\pm(t)=\tfrac12\bigl(t+d\pm\sqrt{(t-d)^2+4c^2}\bigr)$ and rank-one
spectral projections $P_\pm(t)$.  Since $\lambda_\pm'>0$ and
$\lambda_+>d>\lambda_-$, the density of the matrix-valued spectral measure
at $x=\lambda_\pm(t)$ is
$P_\pm(t)\,\rho_{\mathrm{sc}}(t)/\lambda_\pm'(t)$.  It is bounded and has
rank exactly one at every interior point of the support
$\lambda_+([-2,2])\cup\lambda_-([-2,2])$.  Hence every interior point of
the support is a degenerate regular point, and none of them is a limit of
nondegenerate regular points.  By Theorem~\ref{thm:real-axis} the
variational formula nevertheless holds there, with
\[
   \log\Delta(S_a-x)
   =
   \tfrac12\int_{-2}^2\log\bigl|(t-x)(d-x)-c^2\bigr|\,d\mu_{\mathrm{sc}}(t).
\]
\end{exa}

\bigskip

\section{Brown measure}
\label{sec:brown}
Theorem~\ref{thm:real-axis} is stated for self-adjoint matrix
semicircles and real spectral parameters.  Hermitization removes both
restrictions.

\begin{corollary}[Brown measure of matrix-valued semicircular elements]
\label{cor:brown}
Let $a_0,a_1,\dots,a_r\in M_m(\mathbb C)$ be arbitrary matrices and
\[
   S:=a_0\otimes1+\sum_{i=1}^ra_i\otimes s_i .
\]
For $z\in\mathbb C$ put
\[
   A_0(z):=\begin{pmatrix}0&a_0-zI\\ a_0^*-\bar zI&0\end{pmatrix},
   \qquad
   A_i:=\begin{pmatrix}0&a_i\\ a_i^*&0\end{pmatrix}
   \quad(i=1,\dots,r),
\]
all Hermitian in $M_{2m}(\mathbb C)$, and let
$\eta^{\mathrm h}(B):=\sum_iA_iBA_i$, $L^{\mathrm h}_b:=\mathcal C_b+\eta^{\mathrm h}$,
\begin{align*}
   \Fset^{\mathrm h}&:=\{b\succ0:\ \mathcal C_b-\eta^{\mathrm h}\succeq0\}
   \subset M_{2m}(\mathbb C), 
   \\
   F^{\mathrm h}_z(b)
   &:=
   \tr_{2m}\log b
   +
   \sup_{u=u^*}
   \Bigl\{
      \ip{A_0(z)}{u}-\frac12\ip{u}{L^{\mathrm h}_bu}
   \Bigr\}.
\end{align*}
Then, for every $z\in\mathbb C$,
\begin{equation}\label{eq:brown-variational}
   \log\Delta(S-z)
   =
   -\frac12+\inf_{b\in\Fset^{\mathrm h}}F^{\mathrm h}_z(b)
   \qquad\text{in }[-\infty,\infty),
\end{equation}
and both sides equal $-\infty$ if and only if $S-z$ has a nontrivial
kernel.  If $S-z$ is invertible, the infimum is attained.  Consequently the
Brown measure of $S$ is the distributional Laplacian
$\frac1{2\pi}\nabla_z^2$ of the right-hand side of
\eqref{eq:brown-variational}.
\end{corollary}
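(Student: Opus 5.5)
The plan is to apply Theorem~\ref{thm:real-axis} and Corollary~\ref{cor:minus-infinity-atoms} to the hermitization at spectral parameter $0$, and Corollary~\ref{cor:regularized-value-limit} for attainment. Set
\[
H_z:=A_0(z)\otimes1+\sum_iA_i\otimes s_i=\begin{pmatrix}0&S-z\\ S^*-\bar z&0\end{pmatrix}.
\]
This is a self-adjoint matrix semicircle in $M_{2m}(\mathbb C)$ with Hermitian mean $A_0(z)$ and Hermitian coefficients $A_i$. Its covariance map is $\eta^{\mathrm h}$. With these data, the feasible set and functional of Section~\ref{sec:introduction} at spectral parameter $0$ are exactly $\Fset^{\mathrm h}$ and $F^{\mathrm h}_z$, since $a_0$ there equals $A_0(z)$. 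Theorem~\ref{thm:real-axis} then gives
\[
\log\Delta(H_z)=-\tfrac12+\inf_{\Fset^{\mathrm h}}F^{\mathrm h}_z
\]
in $[-\infty,\infty)$. It remains to identify the left side with $\log\Delta(S-z)$.

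The identification is the standard hermitization identity. The spectral measure $\mu_{H_z}$, taken with respect to $\tr_{2m}\otimes\tau$, is the symmetrization of the distribution of $|S-z|$. Indeed, $H_z^2=\operatorname{diag}\bigl((S-z)(S-z)^*,(S-z)^*(S-z)\bigr)$, and both diagonal blocks have the same $\tau$-distribution by traciality. Hence
\[
\int\log|t|\,d\mu_{H_z}(t)=\tfrac12\tau\bigl(\log((S-z)^*(S-z))\bigr)=\log\Delta(S-z),
\]
with the normalization $\Delta(T)=\exp\tau(\log|T|)$ and $\tau$ meaning $\tr_m\otimes\tau$. This holds also when the value is $-\infty$, using the spectral calculus and monotone convergence. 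So $\log\Delta(H_z)=\log\Delta(S-z)$, which proves \eqref{eq:brown-variational}.

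For the $-\infty$ characterization, Corollary~\ref{cor:minus-infinity-atoms} says both sides are $-\infty$ exactly when $0$ is an atom of $\mu_{H_z}$, that is, when $\ker H_z\ne0$. Since $\ker H_z=\ker(S-z)^*\oplus\ker(S-z)$, this happens iff $\ker(S-z)\neq0$ or $\ker(S-z)^*\neq0$. In the finite von Neumann algebra $M_m\otimes\mathcal A$ the projections onto these two kernels are equivalent, so the two kernels have equal trace and vanish together. Hence the condition is $\ker(S-z)\neq0$.

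For attainment, suppose $S-z$ is invertible, meaning bounded invertibility, which the statement uses. Then $H_z$ is invertible and $0\notin\supp\mu_{H_z}$. Because the support is compact and symmetric, $0$ lies in an open component of the resolvent set of $H_z$. This component is bounded unless the spectrum lies on one side of $0$, but in every case it is a gap of the kind required in Definition~\ref{def:nondegenerate-regular}. Thus $0$ is a nondegenerate regular point, and Corollary~\ref{cor:regularized-value-limit} gives attainment at $b_0$, which lies in the interior of $\Fset^{\mathrm h}$ by Proposition~\ref{prop:active-set}.

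For the Brown measure, $z\mapsto\log\Delta(S-z)$ is the logarithmic potential of the Brown measure, which is $\frac1{2\pi}\nabla^2$ of it in the distributional sense. By \eqref{eq:brown-variational} this is the Laplacian of the right-hand side. Local integrability of $\log\Delta(S-z)$ is standard for this purpose, since it is subharmonic.

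The main obstacle is the careful bookkeeping of normalizations. The trace $\tr_{2m}$ produces the factor $\tfrac12$, which must match the FK determinant normalization of $S-z$ on $M_m\otimes\mathcal A$. A second point is the claim that invertibility of $S-z$ forces $0$ into a gap rather than merely a non-atomic point; this is exactly what bounded invertibility supplies.
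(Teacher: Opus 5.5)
Your proposal is correct and follows the paper's proof essentially step for step. You apply Theorem~\ref{thm:real-axis} to the hermitization $H_z$ at the point $0$. You characterize $-\infty$ through Corollary~\ref{cor:minus-infinity-atoms}, using equality of the traces of $\ker(S-z)$ and $\ker(S-z)^*$, and you get attainment from Corollary~\ref{cor:regularized-value-limit} because $0$ lies in a gap. The only difference is that you derive $\Delta(H_z)=\Delta(S-z)$ directly, from the block-diagonal form of $H_z^2$ and traciality, whereas the paper cites \cite[Lemma~3.5(ii)]{MaiSpeicher2024}.
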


\begin{proof}
Let
\[
   H_z
   :=
   \begin{pmatrix}0&S-z\\ S^*-\bar z&0\end{pmatrix}
   =
   A_0(z)\otimes1+\sum_{i=1}^rA_i\otimes s_i
   \in M_{2m}(\mathbb C)\otimes\mathcal A .
\]
This is a self-adjoint matrix semicircle with Hermitian mean $A_0(z)$ and
Hermitian Kraus operators $A_i$, so Theorem~\ref{thm:real-axis} applies
to it at the real point $x=0$.  Its feasible set and functional are
$\Fset^{\mathrm h}$ and $F^{\mathrm h}_z$, because the shift $A_0(z)-0\cdot I$
is $A_0(z)$.  Since $S^*-\bar z=(S-z)^*$, the properties of the
Fuglede--Kadison determinant \cite{FugledeKadison1952} give
$\Delta(H_z)=\Delta(S-z)^{1/2}\Delta((S-z)^*)^{1/2}=\Delta(S-z)$; see
\cite[Lemma~3.5(ii)]{MaiSpeicher2024}.  This proves
\eqref{eq:brown-variational}.

By Corollary~\ref{cor:minus-infinity-atoms}, both sides are $-\infty$ exactly when
$\mu_{H_z}$ has an atom at $0$, that is, when $H_z$ has a nontrivial
kernel.  Since $\ker H_z=\ker(S-z)\oplus\ker((S-z)^*)$ and, in a finite
von Neumann algebra, $\ker T$ and $\ker T^*$ have the same trace, this
happens if and only if $\ker(S-z)\neq\{0\}$.

If $S-z$ is invertible, then $H_z$ is invertible, so $0$ lies in an open
component of $\mathbb R\setminus\supp\mu_{H_z}$ and is a nondegenerate
regular point of $H_z$; the infimum is attained by Corollary~\ref{cor:regularized-value-limit}.
Finally, $z\mapsto\log\Delta(S-z)$ is the logarithmic potential of the
Brown measure of $S$, which is recovered from it as
$\frac1{2\pi}\nabla_z^2\log\Delta(S-z)$ in the sense of distributions
\cite{Brown1986}.
\end{proof}

\section{The centered case and Gurvits capacity} 
\label{sec:capacity}

In the centered case $a=0$ and $z=0$, the Legendre term disappears:
for every $b\in\Fset$,
$
   \sup_{u=u^*}
   \left\{
      -\frac12\ip{u}{L_b^+ u}
   \right\}
   =
   0.
$
Hence
\begin{equation}\label{eq:centered-F}
   F_0(b)
   =
   \tr_m\log b
   =
   \log\Delta(b).
\end{equation}

Recall Gurvits' capacity \cite{Gurvits2004}, 
\begin{equation}\label{eq:capacity}
   \operatorname{Cap}(\eta)
   :=
   \inf
   \left\{
      \det\eta(X):
      X\succ0,\ \det X=1
   \right\},
\end{equation}
see also \cite{GargGurvitsOliveiraWigderson2020}.

We use the standard scaling rule
\begin{equation}\label{eq:capacity-scaling}
   \operatorname{Cap}
   \bigl(
      \mathcal S_{c_1,c_2}(\eta)
   \bigr)
   =
   |\det c_1|^2
   |\det c_2|^2
   \operatorname{Cap}(\eta),
\end{equation}
where
\[
   \mathcal S_{c_1,c_2}(\eta)(X)
   :=
   c_1\eta(c_2Xc_2^*)c_1^*.
\]

We first recall a standard fact.

\begin{lemma}[Capacity of a doubly stochastic map]
\label{lem:ds-capacity}
Let $\Phi:M_m(\mathbb C)\to M_m(\mathbb C)$ be completely positive,
unital, and trace-preserving.  Then
$
   \operatorname{Cap}(\Phi)=1.
$
\end{lemma}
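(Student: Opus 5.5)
We need to prove that $\operatorname{Cap}(\Phi)=1$ when $\Phi$ is completely positive, unital and trace-preserving. The plan is to prove the two inequalities $\operatorname{Cap}(\Phi)\le 1$ and $\operatorname{Cap}(\Phi)\ge 1$ separately: the first by testing a single matrix, the second by a Jensen-type argument for the concave function $\log\det$.

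\emph{Upper bound.} Take $X=I$ in \eqref{eq:capacity}. Then $\det X=1$ and, by unitality, $\det\Phi(I)=\det I=1$. Hence $\operatorname{Cap}(\Phi)\le1$.

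\emph{Lower bound.} Fix $X\succ0$ with $\det X=1$ and write $X=\sum_k\lambda_k P_k$ in spectral form, with rank-one projections $P_k$ and $\sum_k\log\lambda_k=0$. By linearity,
\[
   \Phi(X)=\sum_k \lambda_k\Phi(P_k),
   \qquad
   \Phi(P_k)\succeq0,
   \qquad
   \sum_k\Phi(P_k)=\Phi(I)=I,
   \qquad
   \tr\Phi(P_k)=\tr P_k=1,
\]
where the last two identities use unitality and trace preservation. So $\{\Phi(P_k)\}$ is a positive operator-valued measure whose elements all have normalized trace $1/m$.

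The key step is an operator Jensen / concavity inequality for $\log$: for positive $Q_k$ with $\sum_k Q_k=I$,
\[
   \log\Bigl(\sum_k\lambda_kQ_k\Bigr)\succeq\sum_k(\log\lambda_k)\,Q_k .
\]
This is the operator Jensen inequality \cite{HansenPedersen1982} for the operator concave function $\log$, applied with $C_k=Q_k^{1/2}$ and the scalars $\lambda_k$ (which commute with everything). Taking $\tr$ of both sides gives
\[
   \log\det\Phi(X)
   =\tr\log\Phi(X)
   \ \ge\ \sum_k\log\lambda_k\,\tr\Phi(P_k)
   =\sum_k\log\lambda_k
   =\log\det X=0 .
\]
Thus $\det\Phi(X)\ge1$ for every admissible $X$, so $\operatorname{Cap}(\Phi)\ge1$.

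A more elementary alternative for this step avoids the operator inequality. Concavity of $\log\det$ on positive matrices, together with the normalized POVM structure, reduces the claim to the scalar Jensen inequality applied to each diagonal entry in an eigenbasis of $\Phi(X)$, combined with Hadamard's inequality. The operator-Jensen route is cleaner, and the only point to check is its hypothesis $\sum_k C_k^*C_k=I$, which here is exactly unitality.

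Complete positivity is not needed beyond positivity; trace preservation is used exactly once, to convert $\sum_k(\log\lambda_k)\tr\Phi(P_k)$ into $\log\det X$. Combining the two bounds gives $\operatorname{Cap}(\Phi)=1$.
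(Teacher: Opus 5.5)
Your proof is correct and is essentially the paper's argument. The paper applies the operator Jensen inequality $\log\Phi(X)\succeq\Phi(\log X)$ directly and then takes normalized traces using trace preservation; your spectral decomposition just unpacks this, since $\sum_k(\log\lambda_k)\Phi(P_k)=\Phi(\log X)$, and it has the side benefit of showing that positivity of $\Phi$ suffices. In your elementary aside, the weighted scalar Jensen inequality on the diagonal entries in an eigenbasis of $\Phi(X)$ already completes the argument, so Hadamard's inequality is not needed there (and in any other basis it would give an upper bound, the wrong direction).
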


\begin{proof}
Since $\Phi(I)=I$, the competitor $X=I$ gives
$
   \operatorname{Cap}(\Phi)\leq1.
$

Conversely, let $X\succ0$ with $\det X=1$.  Since $\log$ is operator
concave and $\Phi$ is unital and completely positive, the operator Jensen
inequality \cite{HansenPedersen1982} gives
\[
   \log\Phi(X)
   \succeq
   \Phi(\log X).
\]
Taking normalized traces and using trace preservation,
\[
   \tr_m\log\Phi(X)
   \geq
   \tr_m\Phi(\log X)
   =
   \tr_m\log X
   =
   0.
\]
Hence
$
   \det\Phi(X)\geq1.
$
Taking the infimum over all $X\succ0$ with $\det X=1$ gives
$
   \operatorname{Cap}(\Phi)\geq1.
$
\end{proof}

The centered finite-dimensional problem is exactly Gurvits' capacity.

\begin{theorem}[Centered variational problem and capacity]
\label{thm:centered-capacity}
For every self-adjoint completely positive covariance map $\eta$,
\begin{equation}\label{eq:centered-capacity}
   \boxed{
   \inf_{b\in\Fset}\Delta(b)
   =
   \operatorname{Cap}(\eta)^{1/(2m)}.
   }
\end{equation}

Moreover, the infimum in \eqref{eq:centered-capacity} is attained if and
only if $\eta$ is symmetrically doubly-stochastic scalable; equivalently,
if and only if there exists $C\succ0$ such that
\begin{equation}\label{eq:symmetric-DS}
   \eta(C)=C^{-1}.
\end{equation}
If such $C$ exists, an optimizer is
$
   b=C^{-1}.
$
\end{theorem}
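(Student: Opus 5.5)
The plan is to prove the two inequalities in \eqref{eq:centered-capacity} directly, on the squared scale. Since $\Delta(b)^{2m}=(\det b)^2$, the claim is
\[
   \inf_{b\in\Fset}(\det b)^2=\operatorname{Cap}(\eta).
\]
Everything rests on one reformulation of feasibility: $b\in\Fset$ iff $\|\widehat\eta_b\|\le1$ (Lemma~\ref{lem:feasible-pf}). Since $\widehat\eta_b$ is completely positive and Hilbert--Schmidt self-adjoint, its norm equals its spectral radius, and the Perron eigenmatrix supplies positive test matrices.

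\emph{Lower bound.} Let $b\in\Fset$ and $X\succ0$ with $\det X=1$. I would first show that feasibility gives an operator inequality on positive matrices, namely
\[
   \eta(b^{-1})\preceq b,
   \qquad\text{equivalently}\qquad
   \widehat\eta_b(I)\preceq I .
\]
This should follow from $\|\widehat\eta_b\|_{\infty\to\infty}=\|\widehat\eta_b(I)\|$ together with Russo--Dye for positive maps, but that route needs the operator norm rather than the Hilbert--Schmidt norm, so I will check it carefully. A cleaner route is to pair with the Perron eigenmatrix of the dual and use self-adjointness.

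To bound $\det\eta(X)$ from below, I would rather not go through $\widehat\eta_b(I)$ and instead normalize. Set $\Phi:=\mathcal S_{b^{-1/2},b^{-1/2}}(\eta)=\widehat\eta_b$. By \eqref{eq:capacity-scaling}, $\operatorname{Cap}(\eta)=(\det b)^2\operatorname{Cap}(\widehat\eta_b)$, so it suffices to show $\operatorname{Cap}(\widehat\eta_b)\le1$. Testing with $X=I$ gives $\operatorname{Cap}\le\det\widehat\eta_b(I)$, and I expect
\[
   \tr_m\widehat\eta_b(I)=\langle I,\widehat\eta_b(I)\rangle_{\tr_m}\le\|\widehat\eta_b\|\le1 .
\]
Then AM--GM gives $\det\widehat\eta_b(I)\le1$. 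Hence $\operatorname{Cap}(\eta)\le(\det b)^2$ for every $b\in\Fset$, which is the inequality $\operatorname{Cap}\le\inf$.

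\emph{Upper bound.} This is the main obstacle: I need feasible $b$ with $(\det b)^2$ arbitrarily close to $\operatorname{Cap}(\eta)$. If $\operatorname{Cap}(\eta)=0$, I would take a sequence $X_k$ with $\det X_k=1$ and $\det\eta(X_k)\to0$, and manufacture scales from $X_k$ and $\eta(X_k)$. If $\operatorname{Cap}>0$, I would use scalability. Strictly positive maps (for example $\eta+\delta\,\tr(\cdot)I$) admit $C\succ0$ with $\eta(C)=C^{-1}$ by \cite{KarginNonSingular2026}. Given such $C$, set $b=C^{-1}$. Then $\widehat\eta_b(X)=C^{1/2}\eta(C^{1/2}XC^{1/2})C^{1/2}$ is unital, and by self-adjointness trace preserving, so $\|\widehat\eta_b\|=1$ and $b\in\Fset$. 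Lemma~\ref{lem:ds-capacity} and \eqref{eq:capacity-scaling} then give $(\det b)^2=\operatorname{Cap}(\eta)$. For general $\eta$, I would perturb to $\eta_\delta$, note that $\Fset(\eta_\delta)\subset\Fset(\eta)$, and use continuity $\operatorname{Cap}(\eta_\delta)\to\operatorname{Cap}(\eta)$ as $\delta\downarrow0$. This also settles the case $\operatorname{Cap}=0$.

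\emph{Attainment.} If $\eta(C)=C^{-1}$, attainment at $b=C^{-1}$ was just shown. Conversely, suppose $b\in\Fset$ attains. Then equality holds throughout the chain in the lower bound: $\tr_m\widehat\eta_b(I)=1$ and AM--GM is tight, so $\widehat\eta_b(I)=I$. Unwinding the normalization, this says $\eta(b^{-1})=b$, i.e.\ \eqref{eq:symmetric-DS} holds with $C=b^{-1}$. Finally, combining with \eqref{eq:centered-F} and Theorem~\ref{thm:real-axis} at $a=0$, $z=0$ yields the Mai--Speicher formula.
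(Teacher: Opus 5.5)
Your proposal is correct and is essentially the paper's proof. It gets the lower bound from the scaling rule and $\operatorname{Cap}(\widehat\eta_b)\le\det\widehat\eta_b(I)\le(\tr_m\widehat\eta_b(I))^m\le1$, and the upper bound from the strictly positive regularization $\eta+\delta\,\tr_m(\cdot)I$, symmetric doubly-stochastic scaling at $b=C^{-1}$, and continuity of capacity; attainment comes from equality in AM--GM. You leave implicit the step $\|\widehat\eta_b\|\le1$ at $b=C^{-1}$, which the paper justifies by Kadison--Schwarz.

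One caution: your aside that feasibility yields $\widehat\eta_b(I)\preceq I$ is false in general. For $\eta(X)=\frac1{\sqrt2}\sum_{k=2,3}(E_{1k}XE_{k1}+E_{k1}XE_{1k})$, which has Hermitian Kraus operators proportional to $E_{1k}+E_{k1}$ and $i(E_{1k}-E_{k1})$, one has $\|\eta\|=1$, so $I\in\Fset$, yet $\eta(I)$ has eigenvalue $\sqrt2$. Your argument never uses that claim, since only $\tr_m\widehat\eta_b(I)\le1$ enters.
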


\begin{proof}
We first prove the lower bound.

Let $b\in\Fset$.  Recall
$
   \widehat\eta_b
   =
   \mathcal C_{b^{-1/2}}
   \eta
   \mathcal C_{b^{-1/2}}
   =
   \mathcal S_{b^{-1/2},b^{-1/2}}(\eta).
$
By the scaling rule \eqref{eq:capacity-scaling},
\begin{equation}\label{eq:capacity-scaled-b}
   \operatorname{Cap}(\widehat\eta_b)
   =
   \det(b)^{-2}
   \operatorname{Cap}(\eta).
\end{equation}

Taking $X=I$ in the definition of capacity gives
$
   \operatorname{Cap}(\widehat\eta_b)
   \leq
   \det\widehat\eta_b(I).
$
Since $b\in\Fset$, Lemma~\ref{lem:feasible-pf} gives
$
   I-\widehat\eta_b\succeq0
$
as an operator on Hilbert--Schmidt space.  Pairing this inequality with
$I$ yields
\[
   \tr_m\widehat\eta_b(I)
   =
   \ip{I}{\widehat\eta_b(I)}
   \leq
   \ip{I}{I}
   =
   1.
\]
Moreover, $\widehat\eta_b(I)\succeq0$ as a matrix.  Hence the
arithmetic--geometric mean inequality gives
\[
   \det\widehat\eta_b(I)
   \leq
   \bigl(
      \tr_m\widehat\eta_b(I)
   \bigr)^m
   \leq1.
\]
Therefore
$
   \operatorname{Cap}(\widehat\eta_b)\leq1.
$
Using \eqref{eq:capacity-scaled-b},
$
   \operatorname{Cap}(\eta)
   \leq
   \det(b)^2.
$
Since
$
   \Delta(b)=\det(b)^{1/m},
$
we obtain
\[
   \Delta(b)
   \geq
   \operatorname{Cap}(\eta)^{1/(2m)}.
\]
Taking the infimum over $b\in\Fset$ gives
\begin{equation}\label{eq:capacity-lower}
   \inf_{b\in\Fset}\Delta(b)
   \geq
   \operatorname{Cap}(\eta)^{1/(2m)}.
\end{equation}

We next prove the reverse inequality.

Suppose first that $\eta$ is symmetrically doubly-stochastic scalable.
Choose $C\succ0$ satisfying
$
   \eta(C)=C^{-1}
$
and put
$
   b:=C^{-1}.
$
Then
$
   \widehat\eta_b
   =
   \mathcal S_{C^{1/2},C^{1/2}}(\eta)
$
is unital:
\[
   \widehat\eta_b(I)
   =
   C^{1/2}\eta(C)C^{1/2}
   =
   I.
\]
The map $\widehat\eta_b$ is Hilbert--Schmidt self-adjoint, so unitality
also implies trace preservation:
\[
\begin{aligned}
   \tr_m\widehat\eta_b(X)
   &=
   \ip{I}{\widehat\eta_b(X)}\\
   &=
   \ip{\widehat\eta_b(I)}{X}\\
   &=
   \tr_m X.
\end{aligned}
\]
Thus $\widehat\eta_b$ is doubly stochastic.

By the Kadison--Schwarz inequality,
\[
   \widehat\eta_b(X)^*
   \widehat\eta_b(X)
   \preceq
   \widehat\eta_b(X^*X).
\]
Taking normalized traces and using trace preservation,
$
   \|\widehat\eta_b(X)\|_2^2
   \leq
   \|X\|_2^2.
$
Hence
$
   \|\widehat\eta_b\|_{\mathrm{HS}\to\mathrm{HS}}\leq1,
$
and therefore $b\in\Fset$.

By Lemma~\ref{lem:ds-capacity},
$
   \operatorname{Cap}(\widehat\eta_b)=1.
$
Using \eqref{eq:capacity-scaled-b},
$
   1
   =
   \det(b)^{-2}\operatorname{Cap}(\eta),
$
so
$
   \det(b)^2
   =
   \operatorname{Cap}(\eta).
$
Consequently,
\[
   \Delta(b)
   =
   \operatorname{Cap}(\eta)^{1/(2m)}.
\]
Thus equality holds in \eqref{eq:capacity-lower} whenever an exact
symmetric doubly-stochastic scaling exists.

For a general $\eta$, introduce the strictly positive regularization
\begin{equation}\label{eq:capacity-regularization}
   \eta_\varepsilon(X)
   :=
   \eta(X)
   +
   \varepsilon\,\tr_m(X)I,
   \qquad
   \varepsilon>0.
\end{equation}
Each $\eta_\varepsilon$ is self-adjoint, completely positive, and strictly
positive; hence it is symmetrically doubly-stochastic scalable, \cite{KarginNonSingular2026}.  Denote its
feasible set by
\[
   \Fset_\varepsilon
   :=
   \left\{
      b\succ0:
      \mathcal C_b-\eta_\varepsilon\succeq0
   \right\}.
\]
Since
$
   \eta_\varepsilon\succeq\eta,
$
we have
$
   \Fset_\varepsilon\subseteq\Fset.
$
By the already proved scalable case,
\[
   \inf_{b\in\Fset_\varepsilon}\Delta(b)
   =
   \operatorname{Cap}(\eta_\varepsilon)^{1/(2m)}.
\]
Therefore
\begin{equation}\label{eq:capacity-upper-eps}
   \inf_{b\in\Fset}\Delta(b)
   \leq
   \operatorname{Cap}(\eta_\varepsilon)^{1/(2m)}.
\end{equation}

It remains to let $\varepsilon\downarrow0$.  We claim that
\begin{equation}\label{eq:capacity-continuity}
   \operatorname{Cap}(\eta_\varepsilon)
   \longrightarrow
   \operatorname{Cap}(\eta).
\end{equation}
Since
$
   \eta_\varepsilon(X)\succeq\eta(X),$ with$X\succ0$,
monotonicity of the determinant on the positive cone gives
$
   \operatorname{Cap}(\eta_\varepsilon)
   \geq
   \operatorname{Cap}(\eta).
$

Conversely, fix $\delta>0$ and choose $X_\delta\succ0$ with
$
   \det X_\delta=1
$
and
\[
   \det\eta(X_\delta)
   \leq
   \operatorname{Cap}(\eta)+\delta.
\]
For this fixed $X_\delta$,
$
   \eta_\varepsilon(X_\delta)
   \longrightarrow
   \eta(X_\delta),
$
so
$
   \det\eta_\varepsilon(X_\delta)
   \longrightarrow
   \det\eta(X_\delta).
$
Since
\[
   \operatorname{Cap}(\eta_\varepsilon)
   \leq
   \det\eta_\varepsilon(X_\delta),
\]
we obtain
\[
   \limsup_{\varepsilon\downarrow0}
   \operatorname{Cap}(\eta_\varepsilon)
   \leq
   \operatorname{Cap}(\eta)+\delta.
\]
Letting $\delta\downarrow0$ proves
\eqref{eq:capacity-continuity}.

Passing to the limit in
\eqref{eq:capacity-upper-eps} gives
\[
   \inf_{b\in\Fset}\Delta(b)
   \leq
   \operatorname{Cap}(\eta)^{1/(2m)}.
\]
Together with \eqref{eq:capacity-lower}, this proves
\eqref{eq:centered-capacity}.

It remains to characterize attainment.  Suppose that
$b\in\Fset$ attains the infimum.  Then
$
   \det(b)^2
   =
   \operatorname{Cap}(\eta),
$
and therefore, by \eqref{eq:capacity-scaled-b},
$
   \operatorname{Cap}(\widehat\eta_b)=1.
$
In the chain
\[
   \operatorname{Cap}(\widehat\eta_b)
   \leq
   \det\widehat\eta_b(I)
   \leq
   \bigl(
      \tr_m\widehat\eta_b(I)
   \bigr)^m
   \leq
   1,
\]
all inequalities must be equalities.  In particular,
$
   \tr_m\widehat\eta_b(I)=1
$
and
$
   \det\widehat\eta_b(I)=1.
$
Equality in the arithmetic--geometric mean inequality forces
$
   \widehat\eta_b(I)=I.
$
Since $\widehat\eta_b$ is self-adjoint, it is therefore doubly stochastic.

Writing
$
   C:=b^{-1},
$
the identity
$
   \widehat\eta_b(I)=I
$
is exactly
$
   C^{1/2}\eta(C)C^{1/2}=I,
$
or
$
   \eta(C)=C^{-1}.
$
Thus $\eta$ is symmetrically doubly-stochastic scalable.

The converse was proved above: every symmetric doubly-stochastic scaler
produces an optimizer.
\end{proof}

\begin{corollary}[Mai--Speicher formula]
\label{cor:mai-speicher}
For the centered matrix semicircle
\[
   S_0
   =
   \sum_{j=1}^r A_j\otimes s_j,
\]
one has
\begin{equation}\label{eq:mai-speicher}
   \boxed{
   \log\Delta(S_0)
   =
   -\frac12
   +
   \frac1{2m}
   \log\operatorname{Cap}(\eta).
   }
\end{equation}
The identity is understood in the extended-real sense when
$\operatorname{Cap}(\eta)=0$.
\end{corollary}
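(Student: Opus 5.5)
The formula follows by combining the real-axis variational identity at the point $z=0$ with the centered computation of Theorem~\ref{thm:centered-capacity}. Take $a=0$ and $z=0$ in Theorem~\ref{thm:real-axis}. This gives
\[
   \log\Delta(S_0)=-\frac12+\inf_{b\in\Fset}F_0(b)
\]
in $[-\infty,\infty)$. By \eqref{eq:centered-F}, for $a_z=0$ the supremum over $u$ in \eqref{eq:intro-F} vanishes on $\Fset$. The reason is that $L_b^+\succeq0$ there, by Lemma~\ref{lem:feasible-pf}(iv), so every term is nonpositive and $u=0$ attains $0$. Hence $F_0(b)=\tr_m\log b=\log\Delta(b)$ for every $b\in\Fset$.

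Next, since $\log$ is increasing and continuous on $(0,\infty)$, with the convention $\log 0=-\infty$, we have
\[
   \inf_{b\in\Fset}\log\Delta(b)=\log\inf_{b\in\Fset}\Delta(b).
\]
Theorem~\ref{thm:centered-capacity} identifies the inner infimum as $\operatorname{Cap}(\eta)^{1/(2m)}$, which yields $\frac1{2m}\log\operatorname{Cap}(\eta)$. Substituting this gives \eqref{eq:mai-speicher}.

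The only point needing care is the degenerate case $\operatorname{Cap}(\eta)=0$. There the right-hand side is $-\infty$. The infimum of $\Delta(b)$ over $\Fset$ is then $0$ but is not attained, and the logarithmic infimum is $-\infty$. Theorem~\ref{thm:real-axis} holds in the extended sense, so $\log\Delta(S_0)=-\infty$ as well. By Corollary~\ref{cor:minus-infinity-atoms}, this is consistent with $0$ being an atom of $\mu_{S_0}$.

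No step is a real obstacle, since all the analytic work sits in Theorems~\ref{thm:real-axis} and~\ref{thm:centered-capacity}. The one thing to check is that the passage between $\inf\log$ and $\log\inf$ is legitimate in the extended reals, which holds by monotonicity.
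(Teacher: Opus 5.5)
Your proposal is correct and follows the paper's proof: apply Theorem~\ref{thm:real-axis} with $a=0$ at $z=0$, reduce $F_0$ to $\tr_m\log b$ on $\Fset$ via \eqref{eq:centered-F}, and invoke Theorem~\ref{thm:centered-capacity}. Your extra justifications are correct details that the paper leaves implicit: that $L_b^+\succeq0$ on $\Fset$ makes the supremum over $u$ vanish, and that $\inf\log=\log\inf$ holds in the extended reals.
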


\begin{proof}
By \eqref{eq:centered-F},
\[
   \inf_{b\in\Fset}F_0(b)
   =
   \inf_{b\in\Fset}\log\Delta(b).
\]
Theorem~\ref{thm:centered-capacity} gives
\[
   \inf_{b\in\Fset}\log\Delta(b)
   =
   \frac1{2m}
   \log\operatorname{Cap}(\eta).
\]
Substituting this into Theorem~\ref{thm:real-axis}, applied with
$a=0$ at the point $x=0$, gives
\eqref{eq:mai-speicher}.
\end{proof}

\begin{remark}[Attainment versus escape]
\label{rem:capacity-attainment}
The theorem separates two qualitatively different centered regimes.

If $\eta$ is symmetrically doubly-stochastic scalable, the capacity
minimum is realized by a finite positive scale.

If $\operatorname{Cap}(\eta)>0$ but $\eta$ is not symmetrically
doubly-stochastic scalable, then
\[
   \inf_{b\in\Fset}\Delta(b)>0
\]
but the infimum is not attained.  Any minimizing sequence must therefore
leave every compact subset of the positive definite cone; otherwise a
convergent subsequence would produce an optimizer.

Thus non-attainment of the centered variational problem is precisely the
failure of exact symmetric doubly-stochastic scaling.
\end{remark}

\section{Further questions}
\label{sec:further-questions}

We collect some questions suggested by the variational formula.

\subsection*{Attainment}
Corollary~\ref{cor:regularized-value-limit} gives attainment at nondegenerate regular points,
and Remark~\ref{rem:attainment} shows that attainment at any point where
$G(x+i0)$ exists would follow from boundedness of the regularized scales
$b_\varepsilon$ along a sequence.  We do not know whether $b_\varepsilon$ stays bounded at
degenerate regular points, where $-\operatorname{Im}G(x+i0)$ is nonzero
but singular, or at spectral edges.  In Example~\ref{exa:coupled} it does,
numerically, with corrections of order $\varepsilon$.  At non-atomic
singular points, where $\|G(x+i\varepsilon)\|\to\infty$, the regularized
scales escape, and we do not know whether the finite infimum is attained
by some other positive scale.  In the centered case the answer is given by
Theorem~\ref{thm:centered-capacity}: attainment holds if and only if
$\eta$ is symmetrically doubly-stochastic scalable.

\subsection*{The active set and the spectrum}
In the scalar model of Section~\ref{sec:scalar-model} the minimizer lies
on $\partial\Fset$ inside the support and in the interior of $\Fset$
outside it.  Proposition~\ref{prop:active-set} shows that the same
dichotomy holds in the matrix case at nondegenerate regular points:
$b_z\in\partial\Fset$ when $-\operatorname{Im}G(z+i0)\succ0$, and
$b_z\in\operatorname{int}\Fset$ when $z$ is in the resolvent set.  What
remains open is the behaviour at the points of the support where
$G(z+i0)$ exists and is self-adjoint, such as spectral edges and
interior zeros of the density.  

\begin{question}
\label{ques:active-set}
Let $z\in\supp\mu_{S_a}$ be a point at which $G(z+i0)$ exists and is
self-adjoint.  Do the regularized scales $b_\varepsilon$ converge, and
does the limit lie on $\partial\Fset$?  In particular, does
$z\in\supp\mu_{S_a}$ coincide with $b_z\in\partial\Fset$ at every point
where $b_z$ exists?
\end{question}
For the hermitized problem of Corollary~\ref{cor:brown} the same question
has an additional twist.  If $S-z$ is invertible, then $0$ lies in the
resolvent set of $H_z$, and Proposition~\ref{prop:active-set}(ii) gives
$b_z^{\mathrm h}\in\operatorname{int}\Fset^{\mathrm h}$; conversely, if
$0$ is a nondegenerate regular point of $H_z$ in the support of
$\mu_{H_z}$, then $b_z^{\mathrm h}\in\partial\Fset^{\mathrm h}$.  Thus the
active set of the hermitized problem tracks the spectrum of $S$.  

The support of the Brown measure is always contained in the spectrum, and
for general operators the inclusion can be strict: for an $R$-diagonal
element $T$ this happens exactly when $T^{-1}$ belongs to $L^2$ but is
unbounded \cite[Proposition~4.6]{HaagerupLarsen2000}.  This mechanism is
not available for matrix semicirculars, since
for every $z\in\mathbb C$, $S-z$ is invertible if and only if it is
injective and $(\tr_m\otimes\tau)\bigl(|S-z|^{-2}\bigr)<\infty$.

Indeed, let $\mathcal A$ be the $*$-algebra of operators affiliated with
the von Neumann algebra generated by $s_1,\dots,s_r$.  An element of
$M_m(\mathcal A)$ is invertible if and only if it is injective
\cite[Section~5.1]{MaiSpeicherYin2023}.  If $S-z$ is injective, the
entries of $(S-z)^{-1}$ belong, by definition, to the rational closure of
$\mathbb C\langle s_1,\dots,s_r\rangle$ in $\mathcal A$; for free
semicirculars this coincides with the division closure $D(s)$ and is a
skew field \cite[Theorem~1.1 and Remark~5.10]{MaiSpeicherYin2023}.  By
\cite[Theorem~1.1]{Miyagawa2026}, $D(s)\cap L^2\subset L^\infty$.  Hence,
if the entries of $(S-z)^{-1}$ are square integrable, they are bounded.

Whether the spectrum and the support of the Brown measure coincide for
rational functions in free semicirculars is asked in \cite{Miyagawa2026};
for the sum of a self-adjoint element and a free elliptic element they do
coincide \cite{HallHo2025}.

\begin{question}
\label{ques:brown-active}
Let $S=a_0\otimes1+\sum_ia_i\otimes s_i$ with arbitrary
$a_i\in M_m(\mathbb C)$.
Does the spectrum of $S$ coincide with the support of its Brown
measure?  In variational terms: is the Brown measure supported exactly on
the set of $z$ at which the constraint
$\mathcal C_b-\eta^{\mathrm h}\succeq0$ is active at the minimizer $b^{\mathrm h}_z = \lim_{\eps \downarrow 0} b_\eps$?

\end{question}

\subsection*{Singular points and the escape rate}
At a non-atomic singular point the boundary behaviour of $G$ is governed
by a Puiseux exponent $\theta\in(0,1)$ (Appendix~\ref{app:local-structure}):
$\|G(x+i\varepsilon)\|\asymp\varepsilon^{-\theta}$, and the density of
$\mu_{S_a}$ is $O(|t-x|^{-\theta})$ near $x$.  The regularized scales
satisfy $\|b_\varepsilon^{-1}\|\geq\|G(x+i\varepsilon)\|$.
It would be interesting to know whether the exponent $\theta$, and more
generally the Puiseux data of $G$ at $x$, can be read off from the
geometry of the escaping minimizing sequence, for instance from the rates
at which the eigenvalues of $b_\varepsilon$ tend to $0$ and to $\infty$.
Their product is constrained by $\det b_\varepsilon=|\det G(x+i\varepsilon)|^{-1}$.

\subsection*{Other free families}
The proof uses the semicircular structure in one place: the covariance
map $\eta$ is linear, so the Dyson equation is quadratic and the Dyson
action is explicit.  Lehner's formulas \cite{Lehner1999} for the spectral
edge exist both for free semicirculars and for free Haar unitaries, and
Mai and Speicher \cite[Question~1]{MaiSpeicher2024} ask for which
families of operators the determinantal property
$\Delta(X)\geq1$ holds for invertible $X=\sum_ia_i\otimes x_i$ with
integer coefficients.

\begin{question}
\label{ques:other-families}
Is there a variational formula analogous to \eqref{eq:intro-main-formula}
for $a\otimes1+\sum_ia_i\otimes u_i$ with free Haar unitaries $u_i$, or
more generally for operator-valued free variables whose $R$-transform is a
polynomial in the operator-valued sense?
\end{question}

\subsection*{Random matrices}
Mai and Speicher \cite[Conjecture~1]{MaiSpeicher2024} conjecture that for
independent GUE matrices $X_1^{(N)},\dots,X_r^{(N)}$ the normalized
log-determinants of the block matrices $A_N=\sum_iA_i\otimes X_i^{(N)}$
converge to $\log\Delta(S_0)$.  The shifted analogue would be
\[
   \frac1{mN}\log\bigl|\det(A_N-z)\bigr|
   \longrightarrow
   -\frac12+\inf_{b\in\Fset}F_z(b)
   \qquad(z\in\mathbb R),
\]
and, via Corollary~\ref{cor:brown}, its non-Hermitian counterpart for the
Brown measure would identify the limiting empirical spectral distribution
of $\sum_ia_i\otimes X_i^{(N)}$ with arbitrary $a_i$.  The difficulty is
the usual one for log-determinants, the contribution of the smallest
singular values, and the variational formula does not by itself address
it.  It does, however, provide an explicit and computable target.

\subsection*{Computation and duality}
Kunisky \cite{Kunisky2026} reformulated Lehner's formulas as semidefinite
programs and used complementary slackness to obtain sensitivity
information about the optimizer.  The variational problem
\eqref{eq:intro-main-formula} is geodesically convex rather than convex,
and the feasible set is defined by the operator inequality
$\mathcal C_b\succeq\eta$, which is quadratic in $b$.

\begin{question}
\label{ques:sdp}
Does \eqref{eq:intro-main-formula} admit a convex reformulation, in the
manner of \cite{Kunisky2026}, and does its dual carry information about
the spectral density, for instance the derivative of $\log\Delta(S_a-z)$
in the coefficients $a$ and $A_j$?
\end{question}

The first variation of Section~\ref{sec:geometry} suggests that the dual
object is the Perron eigenvector $x_0=b_z^{1/2}\gamma\,b_z^{1/2}$ of
$\widehat\eta_{b_z}$, which already appears in the proof of
Proposition~\ref{prop:active-set}.

\subsection*{An unconstrained formula}
Sheng Yin has posted a different variational expression for the same
determinant \cite[Theorem~1.3]{Yin2026}.  For
$S=a_0\otimes1+\sum_ja_j\otimes s_j$ with arbitrary
$a_0,a_j\in M_m(\mathbb C)$, put $\eta(b)=\sum_ja_jba_j^*$ and
$\rho(b)=\sum_ja_jba_j$, for $b\succ0$ let $Z_b$ be the solution of
the real-linear equation $(I+\eta(b))Zb^{-1}+\rho(Z^*)=a_0$, and set
\[
   \Phi(b)
   :=
   \frac1{2m}\Bigl[\log\det(I+\eta(b))-\log\det b
      +\operatorname{Re}\operatorname{Tr}(a_0^*Z_b)\Bigr].
\]
The statement is that
\begin{equation}\label{eq:yin-formula}
   \log\Delta(S)=-\frac12+\inf_{b\succ0}\Phi(b),
\end{equation}
and that the infimum is attained if and only if $S$ is boundedly
invertible, in which case the minimizer is unique.  Here the infimum is
over all positive definite matrices of size $m$, with no constraint, and
no hermitization is needed.  According to \cite{Yin2026}, the text of that
manuscript was generated by a large language model and has not been
checked by its author.  We have not verified its proofs in full; the
formula \eqref{eq:yin-formula} agrees with ours in numerical tests.

For a self-adjoint matrix semicircle and real $z$, that is, for
$a_0=a_z$ and $a_j=A_j$, the two variational problems are related as
follows \cite[Section~4]{Yin2026}.  The map $b\mapsto c_b$, where
$c_b\succ0$ is the solution of $c_b\,b\,c_b=I+\eta(b)$, is a bijection
from the positive definite cone onto $\Fint$, with inverse
$c\mapsto(L_c^-)^{-1}(I)$.  Under this bijection the determinant terms
of $\Phi$ and $F_z$ coincide, and the quadratic terms are ordered, so
that
\begin{equation}\label{eq:yin-comparison}
   \Phi(b)\ \ge\ F_z(c_b)\qquad(b\succ0),
\end{equation}
with strict inequality in general.  By Theorem~\ref{thm:real-axis},
\eqref{eq:yin-comparison} gives
$\inf_{b\succ0}\Phi(b)\ge\log\Delta(S_a-z)+\frac12$.  Conversely, by
Remark~\ref{rem:yin-dictionary}, the matrices
$\gamma_\eps/\eps$ satisfy
$\Phi(\gamma_\eps/\eps)\le F_{z,\eps}(b_\eps)
=\log\Delta(S_a-z-i\eps)+\frac12$, and the right-hand side decreases to
$\log\Delta(S_a-z)+\frac12$.  Thus the value in \eqref{eq:yin-formula}
follows from Theorem~\ref{thm:real-axis} in the self-adjoint case.  In
the opposite direction, \cite[Proposition~1.4]{Yin2026} deduces
\eqref{eq:intro-main-formula}, with the infimum over $\Fint$, from
\eqref{eq:yin-formula}, \eqref{eq:yin-comparison} and a separate upper
bound.

The two problems behave differently.  The function $F_z$ is
geodesically convex, and inside the spectrum its minimizer, when it
exists, lies on $\partial\Fset$.  The function $\Phi$ has no minimizer
inside the spectrum, and it is not geodesically convex: for $m=1$ and
$S=a_0+\sigma s$ one finds
$2\Phi(b)=\log(\sigma^2+b^{-1})+a_0^2\,b/(1+2\sigma^2b)$, which is
not convex in $\log b$ when $a_0^2>27\sigma^2/4$.

\begin{question}
\label{ques:unconstrained}
For non-self-adjoint coefficients, \eqref{eq:yin-formula} is a problem
in size $m$, while Corollary~\ref{cor:brown} is a geodesically convex
problem in size $2m$.  Is there an analogue of the bijection
$b\mapsto c_b$ and of the comparison \eqref{eq:yin-comparison} relating
them?  Is there a constrained, geodesically convex formula for
$\log\Delta(S-z)$ in size $m$?
\end{question}

\appendix
\section{Limits of the regularized normalizers}
\label{app:normalizer-limits}

This appendix proves the convergence statement for the regularized
unitary normalizers used in Section~\ref{sec:attainment}.  The argument
is purely finite-dimensional and depends only on the explicit $pUp$
factorization.

Fix $z\in\mathbb R$ and write
\[
   G_\varepsilon
   :=
   G(z+i\varepsilon)
   =
   X_\varepsilon-iY_\varepsilon,
   \qquad
   Y_\varepsilon\succ0.
\]
Let $b_\varepsilon$ be the unitary normalizer given by
Lemma~\ref{lem:pup}.  Thus
\begin{equation}\label{eq:app-pup-normalizer}
   b_\varepsilon^{-1}
   =
   Y_\varepsilon^{1/2}
   \left(
      I+
      \bigl(
         Y_\varepsilon^{-1/2}
         X_\varepsilon
         Y_\varepsilon^{-1/2}
      \bigr)^2
   \right)^{1/2}
   Y_\varepsilon^{1/2}.
\end{equation}

We now prove Proposition~\ref{prop:normalizer-limit}.

\begin{proof}
We treat the two cases separately.

\smallskip

\noindent
\emph{Case 1: strictly negative imaginary part.}
Assume
$
   Y:=-\operatorname{Im}G\succ0.
$
Then
\[
   X_\varepsilon\longrightarrow
   X:=\operatorname{Re}G,
   \qquad
   Y_\varepsilon\longrightarrow Y.
\]
Since $Y\succ0$, the maps
$
   Y\longmapsto Y^{\pm1/2}
$
are continuous in a neighborhood of $Y$.  Hence every factor in
\eqref{eq:app-pup-normalizer} converges, and therefore
\[
   b_\varepsilon^{-1}
   \longrightarrow
   Y^{1/2}
   \left(
      I+
      \bigl(
         Y^{-1/2}XY^{-1/2}
      \bigr)^2
   \right)^{1/2}
   Y^{1/2}.
\]
The right-hand side is precisely the inverse of the unitary normalizer
obtained by applying Lemma~\ref{lem:pup} directly to $G$.  This proves
part~(i).

\smallskip

\noindent
\emph{Case 2: a real resolvent gap.}
Suppose now that $z$ belongs to an open component of
$\mathbb R\setminus\supp\mu_{S_a}$.  Then the operator-valued Cauchy
transform extends analytically across a neighborhood of $z$, and
$
   G(z)=G(z)^*.
$
Moreover,
\[
   G'(z)
   =
   -
   \int_{\mathbb R}
   \frac{1}{(z-t)^2}\,dv(t),
\]
where $v$ is the $M_m(\mathbb C)$-valued spectral measure of $S_a$.
Thus
\begin{equation}\label{eq:app-D}
   D:=-G'(z)
   =
   \int_{\mathbb R}
   \frac{1}{(z-t)^2}\,dv(t)
   \succ0.
\end{equation}
Indeed, for every nonzero vector $\xi$,
\[
   \langle\xi,D\xi\rangle
   =
   \int_{\mathbb R}
   \frac{1}{(z-t)^2}\,
   d\langle\xi,v(t)\xi\rangle
   >0.
\]

By Schwarz reflection,
$
   G(\bar w)=G(w)^*,
$
so the Taylor expansion of $G(z+i\varepsilon)$ at $z$ has the form
\begin{align}
   X_\varepsilon
   &=
   G(z)+O(\varepsilon^2),
   \label{eq:app-X-expansion}\\
   Y_\varepsilon
   &=
   \varepsilon D+O(\varepsilon^3).
   \label{eq:app-Y-expansion}
\end{align}
It is convenient to write
$
   D_\varepsilon
   :=
   \varepsilon^{-1}Y_\varepsilon.
$
Then
$
   D_\varepsilon\longrightarrow D\succ0.
$

Define
\[
   T_\varepsilon
   :=
   D_\varepsilon^{-1/2}
   X_\varepsilon
   D_\varepsilon^{-1/2}.
\]
Since
$
   Y_\varepsilon
   =
   \varepsilon D_\varepsilon,
$
we have
\[
   Y_\varepsilon^{-1/2}
   X_\varepsilon
   Y_\varepsilon^{-1/2}
   =
   \varepsilon^{-1}T_\varepsilon.
\]
Substituting this into \eqref{eq:app-pup-normalizer},
\begin{align*}
   b_\varepsilon^{-1}
   &=
   \varepsilon^{1/2}D_\varepsilon^{1/2}
   \left(
      I+\varepsilon^{-2}T_\varepsilon^2
   \right)^{1/2}
   \varepsilon^{1/2}D_\varepsilon^{1/2}\\
   &=
   D_\varepsilon^{1/2}
   \left(
      \varepsilon^2I+T_\varepsilon^2
   \right)^{1/2}
   D_\varepsilon^{1/2}.
\end{align*}
Now
\[
   T_\varepsilon
   \longrightarrow
   T
   :=
   D^{-1/2}G(z)D^{-1/2}.
\]
Since $G(z)$ is invertible, so is $T$.  Therefore, by continuity of the
functional calculus,
$
   \left(
      \varepsilon^2I+T_\varepsilon^2
   \right)^{1/2}
   \longrightarrow
   |T|.
$
Consequently,
\[
   b_\varepsilon^{-1}
   \longrightarrow
   D^{1/2}|T|D^{1/2},
\]
which is exactly \eqref{eq:gap-normalizer}.

It remains to verify that the limiting matrix is indeed a unitary
normalizer of $G(z)$.  Since $T=T^*$ is invertible,
\[
   T
   =
   |T|^{1/2}
   J
   |T|^{1/2},
   \qquad
   J:=\operatorname{sign}(T),
\]
where $J=J^*$ and $J^2=I$.  Put
$
   L
   :=
   D^{1/2}|T|^{1/2}.
$
Then
\[
   G(z)
   =
   D^{1/2}TD^{1/2}
   =
   LJL^*,
\]
and
\[
   LL^*
   =
   D^{1/2}|T|D^{1/2}
   =
   b_z^{-1}.
\]
Take the polar decomposition
  $
   L=b_z^{-1/2}V,$
   where $
   V\in U(m).$
It follows that
\[
   G(z)
   =
   b_z^{-1/2}
   (VJV^*)
   b_z^{-1/2}.
\]
Since $VJV^*$ is unitary,
\[
   b_z^{1/2}G(z)b_z^{1/2}
   =
   VJV^*
\]
is unitary.  Thus $b_z$ is a unitary normalizer of $G(z)$.

Finally, if $G(z)$ is definite, then $T$ is definite as well.  Hence
$
   |T|
   =
   \pm T,
$
with the sign chosen according to the sign of $G(z)$.  Therefore
\[
   b_z^{-1}
   =
   D^{1/2}|T|D^{1/2}
   =
   \pm G(z)
   =
   |G(z)|,
\]
which proves \eqref{eq:definite-normalizer}.
\end{proof}

\begin{remark}
The appearance of
\[
   D=-G'(z)
\]
in \eqref{eq:gap-normalizer} reflects the fact that when the boundary
value $G(z)$ is self-adjoint and indefinite, it may admit more than one
positive unitary normalizer.  The vertical regularization
$G(z+i\varepsilon)$ selects a canonical one, and this selection depends on
the first transverse derivative of the Cauchy transform at the real axis.

When $G(z)$ is definite, the ambiguity disappears and the selected
normalizer reduces to the familiar expression
$
   |G(z)|^{-1}.
$
\end{remark}

\section{Local regularity at non-atomic points}
\label{app:local-structure}

The purpose of this appendix is to establish the local
regularity needed to characterize the value $-\infty$
in the main theorem. We prove that every non-atomic real
point has a neighborhood on which the spectral measure
has a density in $L^p$ for some $p>1$.
By H\"older's inequality, this implies local logarithmic
integrability and hence finiteness of the logarithmic
potential at that point; see
Corollary~\ref{cor:minus-infinity-atoms}.
The real-axis variational identity itself does not
require this regularity input.
Let
\[
   G(w)
   =
   (\id\otimes\tau)
   \bigl[(wI-S_a)^{-1}\bigr],
   \qquad
   w\in\mathbb C^+,
\]
and let
\[
   g(w):=\tr_m G(w)
   =
   \int_{\mathbb R}\frac{1}{w-t}\,d\mu_{S_a}(t).
\]

We use the entrywise algebraicity of the matrix Cauchy transform
proved in \cite[Proposition~2.4]{KarginSingularities2026};
compare also \cite{ShlyakhtenkoSkoufranis2015} for algebraicity of
scalar Cauchy transforms of matricial polynomials in free semicirculars.

\subsection{Local Puiseux control}

The only consequence of algebraicity that we need is the following local
estimate.

\begin{lemma}[Local Puiseux control]
\label{lem:local-puiseux-control}
Fix $x\in\mathbb R$.  There exist an integer $N\geq1$, a radius $r>0$,
matrices $C_k\in M_m(\mathbb C)$, and an integer $k_0$ with
$C_{k_0}\neq0$ such that the branch of $G$ selected from the upper
half-plane has a convergent Puiseux expansion
\begin{equation}\label{eq:local-puiseux}
   G(x+\zeta)
   =
   \sum_{k\geq k_0}
   C_k\zeta^{k/N},
   \qquad
   0<|\zeta|<r,
   \quad
   0<\arg\zeta<\pi.
\end{equation}
Here the fractional powers are taken with the branch determined by
$0<\arg\zeta<\pi$.

Set
$
   \theta(x):=-\frac{k_0}{N}.
$
Then
$
   0\leq\theta(x)\leq 1
 $
and, after decreasing $r$ if necessary,
\begin{equation}\label{eq:local-G-bound}
   \|G(x+\zeta)\|
   \leq
   C|\zeta|^{-\theta(x)}
\end{equation}
throughout the upper half-disc
$
   0<|\zeta|<r,$
   $
   \operatorname{Im}\zeta>0.
$
\end{lemma}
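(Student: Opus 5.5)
The plan is to use algebraicity of the entries of $G$ (\cite[Proposition~2.4]{KarginSingularities2026}). Each entry $G_{ij}(w)$ satisfies a nontrivial polynomial equation $P_{ij}(w,G_{ij})=0$. Near $w=x$, the Newton--Puiseux theorem then gives, for each branch, a convergent expansion in powers of $(w-x)^{1/N_{ij}}$ on a punctured disc, with at most finitely many negative powers. The branch is the one singled out by analytic continuation from the upper half-disc. Since the punctured upper half-disc is simply connected and contains no branch points of $P_{ij}$ once $r$ is small, the holomorphic function $G_{ij}$ there coincides with a single Puiseux branch. That branch is defined by a fixed choice of $\zeta^{1/N_{ij}}$ with $0<\arg\zeta<\pi$. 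Taking $N$ to be a common multiple of the $N_{ij}$ and collecting coefficients into matrices $C_k$ gives \eqref{eq:local-puiseux}. Here $k_0$ is the smallest index with $C_{k_0}\neq0$, which exists because $G$ is not identically zero.

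For the exponent bounds, set $\theta=-k_0/N$. The inequality $\theta\le1$ follows from the Nevanlinna bound $\|G(w)\|\le(\operatorname{Im}w)^{-1}$. This holds because $G(w)$ is a compression of a resolvent whose norm is at most $1/\operatorname{Im}w$. Along the ray $\zeta=i\varepsilon$, the leading term $C_{k_0}(i\varepsilon)^{k_0/N}$ dominates the remainder as $\varepsilon\downarrow0$, so $\|G(x+i\varepsilon)\|\gtrsim\varepsilon^{k_0/N}$. Comparing with $\varepsilon^{-1}$ forces $k_0/N\ge-1$.

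The lower bound $\theta\ge0$ cannot come from such an estimate, since $k_0>0$ is a priori conceivable. I will get it from Lemma~\ref{lem:boundary-invertible}-type reasoning. Suppose $k_0>0$. Then $G(x+\zeta)\to0$ as $\zeta\to0$ in the upper half-disc. But the Dyson equation gives $G^{-1}=wI-a-\eta(G)$, which stays bounded near $x$. This contradicts $G\to0$, because $\|G^{-1}\|\ge\|G\|^{-1}\to\infty$. Hence $k_0\le0$ and $\theta\ge0$. If one prefers to define $\theta$ as $\max(0,-k_0/N)$, this step is unnecessary, but the Dyson argument handles it directly.

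For the bound \eqref{eq:local-G-bound}, I factor out the leading power:
\[
G(x+\zeta)=\zeta^{k_0/N}\sum_{j\ge0}C_{k_0+j}\zeta^{j/N}.
\]
The series converges absolutely and uniformly on $|\zeta|\le r'$ for any $r'<r$. It is therefore bounded by a constant $C$ there, which gives $\|G(x+\zeta)\|\le C|\zeta|^{k_0/N}=C|\zeta|^{-\theta}$ after shrinking $r$.

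I expect the main obstacle to be the first step: showing rigorously that the branch obtained from the upper half-plane is one Puiseux branch, with no mixing between branches. This requires excluding branch points of the discriminant locus other than $x$ inside the disc, which is done by shrinking $r$ since that locus is finite. It also requires handling a possibly vanishing leading coefficient of $P_{ij}$ at $x$, which is what produces the negative exponents.
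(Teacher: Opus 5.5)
Your proposal is correct and follows essentially the same route as the paper: entrywise algebraicity plus Newton--Puiseux with a common ramification index, the bound obtained by factoring out the leading power, $\theta\le1$ from the resolvent estimate $\|G(x+i\varepsilon)\|\le1/\varepsilon$, and $\theta\ge0$ from the Dyson equation keeping $G^{-1}$ bounded. You also justify why the branch from the upper half-plane is a single Puiseux branch, by shrinking $r$ to avoid other discriminant points and using that the half-disc is simply connected; this fills in a detail the paper leaves implicit.
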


\begin{proof}
By entrywise algebraicity, the classical Newton--Puiseux theorem \cite{Walker1950} gives a
convergent Puiseux expansion for every entry $G_{pq}$ on the local branch
selected from $\mathbb C^+$.  Taking a common multiple of the finitely
many ramification indices gives \eqref{eq:local-puiseux}.

Factoring out the lowest nonzero power gives
\[
   G(x+\zeta)
   =
   \zeta^{-\theta(x)}
   \left(
      C_{k_0}
      +
      \sum_{k>k_0}
      C_k\zeta^{(k-k_0)/N}
   \right).
\]
The series in parentheses is uniformly bounded for sufficiently small
$\zeta$ in the upper half-disc.  This gives
\eqref{eq:local-G-bound}.

It remains to prove  that  $0\leq\theta(x)\leq 1$.  The resolvent estimate gives
$
   \|G(x+i\varepsilon)\|
   \leq
   \frac1{\varepsilon}.
$
Comparison with the leading Puiseux behavior shows that
$
   \theta(x)\leq1.
$

Suppose, on the other hand, that $\theta(x)<0$.  Then
$
   G(x+i\varepsilon)\longrightarrow0.
$
The Dyson equation gives
\[
   G(w)^{-1}
   =
   wI-a-\eta(G(w)).
\]
Its right-hand side remains bounded as $w\to x$, whereas
$\|G(w)\|\to0$ would imply
$
   \|G(w)^{-1}\|\longrightarrow\infty.
$
This contradiction proves $\theta(x)\geq0$.
\end{proof}

\subsection{The exponent one and atoms}

The extremal exponent $\theta=1$ has a simple spectral meaning.

\begin{lemma}[Atoms and the exponent one]
\label{lem:theta-one-atom}
For $x\in\mathbb R$,
\begin{equation}\label{eq:theta-one-iff-atom}
   \theta(x)=1
   \quad\Longleftrightarrow\quad
   \mu_{S_a}(\{x\})>0.
\end{equation}
\end{lemma}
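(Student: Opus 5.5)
We prove the two implications of \eqref{eq:theta-one-iff-atom} separately. Both rest on the scalar Cauchy transform $g=\tr_m G$, on the Puiseux expansion \eqref{eq:local-puiseux}, and on the elementary fact that along vertical rays
\[
   \lim_{\varepsilon\downarrow0}\, i\varepsilon\, g(x+i\varepsilon)=\mu_{S_a}(\{x\}),
\]
which follows from dominated convergence applied to $\frac{i\varepsilon}{x+i\varepsilon-t}$. This function is bounded by $1$ and tends to $\mathbf 1_{\{t=x\}}$. The same argument for the matrix measure $v$ gives $i\varepsilon G(x+i\varepsilon)\to v(\{x\})$.

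\emph{Atom $\Rightarrow\theta=1$.} Suppose $\mu_{S_a}(\{x\})>0$. Then $v(\{x\})\neq0$, so $\|G(x+i\varepsilon)\|\geq c\,\varepsilon^{-1}$ for small $\varepsilon$. Lemma~\ref{lem:local-puiseux-control} gives $\|G(x+i\varepsilon)\|\leq C\varepsilon^{-\theta(x)}$. Comparing the two bounds forces $\theta(x)\geq1$, and since $\theta(x)\leq1$ we get $\theta(x)=1$.

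\emph{$\theta=1\Rightarrow$ atom.} Suppose $\theta(x)=1$, so that $k_0=-N$ and $C_{-N}\neq0$. The expansion \eqref{eq:local-puiseux} gives
\[
   \zeta\,G(x+\zeta)=C_{-N}+\sum_{k>-N}C_k\zeta^{(k+N)/N}\longrightarrow C_{-N}.
\]
Taking $\zeta=i\varepsilon$ yields $i\varepsilon G(x+i\varepsilon)\to C_{-N}$. By the matrix version of the limit above, this limit equals $v(\{x\})$. Hence $v(\{x\})=C_{-N}\neq0$, and therefore $\mu_{S_a}(\{x\})=\tr_m v(\{x\})>0$. The last step uses that $v(\{x\})\succeq0$ and is nonzero, so its normalized trace is positive.

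The main point to check is the identification of the Puiseux coefficient with $v(\{x\})$. This requires the vertical-ray limit for the matrix-valued measure, and that limit is justified entrywise by polarization, or simply by applying the scalar dominated convergence argument to $\langle\xi,v\,\xi\rangle$. The other subtlety is that the expansion is valid on the upper half-disc, so the ray $\zeta=i\varepsilon$ is admissible. Everything else is routine.
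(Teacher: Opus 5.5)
Your proof is correct and follows essentially the same route as the paper. For $\theta(x)=1\Rightarrow$ atom, you identify the leading Puiseux coefficient with the limit of $i\varepsilon G(x+i\varepsilon)$, which is the matrix-valued atom $(\id\otimes\tau)[\mathbf 1_{\{x\}}(S_a)]$; for the converse, you compare an $\varepsilon^{-1}$ lower bound on $\|G(x+i\varepsilon)\|$ with the Puiseux upper bound. The only cosmetic difference is in how the two limits are justified: the paper uses strong resolvent convergence and the scalar bound $-\operatorname{Im} g(x+i\varepsilon)\ge \mu_{S_a}(\{x\})/\varepsilon$, while you use dominated convergence against the matrix spectral measure in both directions.
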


\begin{proof}
Suppose first that $\theta(x)=1$.  The Puiseux expansion gives
$
   i\varepsilon G(x+i\varepsilon)
   \longrightarrow
   P
$
for some nonzero matrix $P$.

On the other hand, by the spectral theorem,
\[
   i\varepsilon
   (x+i\varepsilon-S_a)^{-1}
   \longrightarrow
   \mathbf 1_{\{x\}}(S_a)
\]
strongly as $\varepsilon\downarrow0$.  Since the operators on the left
are uniformly bounded, applying $\id\otimes\tau$ gives
\[
   i\varepsilon G(x+i\varepsilon)
   \longrightarrow
   P_x
   :=
   (\id\otimes\tau)
   \bigl[\mathbf 1_{\{x\}}(S_a)\bigr].
\]
Hence
\[
   P=P_x\neq0.
\]
The matrix $P_x$ is positive semidefinite, so
$
   \mu_{S_a}(\{x\})
   =
   \tr_m P_x
   >0.
$

Conversely, suppose
$
   p:=\mu_{S_a}(\{x\})>0.
$
Since
\[
   -\operatorname{Im}g(x+i\varepsilon)
   =
   \int_{\mathbb R}
   \frac{\varepsilon}
        {(t-x)^2+\varepsilon^2}
   \,d\mu_{S_a}(t),
\]
the atom at $x$ gives
\[
   -\operatorname{Im}g(x+i\varepsilon)
   \geq
   \frac{p}{\varepsilon}.
\]
Because the normalized trace is dominated by the operator norm,
\[
   \|G(x+i\varepsilon)\|
   \geq
   \frac{p}{\varepsilon}.
\]
Together with the resolvent bound
$
   \|G(x+i\varepsilon)\|
   \leq
   \frac1{\varepsilon},
$
this forces the leading Puiseux exponent to be
$
   \theta(x)=1.
$
\end{proof}

\subsection{Local $L^p$ regularity}

\begin{theorem}[Local regularity at non-atomic points]
\label{thm:nonatomic-local-regularity}
Let $x\in\mathbb R$ satisfy
$
   \mu_{S_a}(\{x\})=0.
$
Then there exist an open interval $I\ni x$, a number $p>1$, and a
nonnegative function
$
   \rho\in L^p(I)
$
such that
\begin{equation}\label{eq:local-ac}
   d\mu_{S_a}(t)\big|_I
   =
   \rho(t)\,dt.
\end{equation}

More precisely, if $\theta=\theta(x)$ is the exponent from
Lemma~\ref{lem:local-puiseux-control}, then $\theta<1$ and one may choose
$I$ so that
\begin{equation}\label{eq:rho-pointwise-bound}
   0\leq
   \rho(t)
   \leq
   C|t-x|^{-\theta}
   \qquad
   \text{for a.e. }t\in I.
\end{equation}
Consequently,
\[
   \rho\in L^p(I)
   \qquad
   \text{for every }
   1<p<\frac1\theta
\]
when $\theta>0$, while for $\theta=0$ one has
$
   \rho\in L^\infty(I).
$
\end{theorem}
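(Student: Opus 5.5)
The plan is to combine the Puiseux control of Lemma~\ref{lem:local-puiseux-control} with the Stieltjes inversion formula for the scalar Cauchy transform $g=\tr_m G$. First, since $\mu_{S_a}(\{x\})=0$, Lemma~\ref{lem:theta-one-atom} gives $\theta(x)\neq1$, and together with $0\le\theta\le1$ we get $\theta<1$.

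Next we transfer the bound from the point $x$ to a real neighbourhood of it. By \eqref{eq:local-G-bound},
\[
   |g(x+\zeta)|\le\|G(x+\zeta)\|\le C|\zeta|^{-\theta}
\]
on the upper half-disc $0<|\zeta|<r$, $\operatorname{Im}\zeta>0$. For $t\in I:=(x-r/2,x+r/2)$ with $t\ne x$ and $\varepsilon\downarrow0$, the points $t+i\varepsilon$ lie in this half-disc and satisfy $|t+i\varepsilon-x|\ge|t-x|$. Hence $-\operatorname{Im} g(t+i\varepsilon)\le C|t-x|^{-\theta}$, and this bound is uniform in $\varepsilon$.

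With this bound in hand, we apply Stieltjes inversion. The measures $\frac{1}{\pi}(-\operatorname{Im} g(t+i\varepsilon))\,dt$ converge vaguely to $\mu_{S_a}$. On $I$ their densities are dominated by $h(t):=C|t-x|^{-\theta}$, which is integrable on $I$ because $\theta<1$. Testing against nonnegative continuous $\phi$ supported in $I$ gives
\[
   \int\phi\,d\mu\le\frac1\pi\int\phi\,h\,dt.
\]
So $\mu|_I\le\pi^{-1}h\,dt$, which makes $\mu|_I$ absolutely continuous with density $\rho\le\pi^{-1}h$ almost everywhere. This yields \eqref{eq:local-ac} and \eqref{eq:rho-pointwise-bound}. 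One point needs care here: the point $t=x$ itself carries no mass, since $\mu(\{x\})=0$, so passing from the open set $I\setminus\{x\}$ to $I$ costs nothing. Alternatively, the domination argument applies directly, because $h\in L^1(I)$ and $\phi$ may straddle $x$.

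Finally, the integrability claims follow from the pointwise bound: $\int_I|t-x|^{-\theta p}\,dt<\infty$ exactly when $\theta p<1$, and when $\theta=0$ the density $\rho$ is bounded.

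The main obstacle is the second step. The Puiseux bound is stated at the single point $x$, in the variable $\zeta=w-x$, and we must check that it controls $\operatorname{Im} g$ along vertical approaches to every nearby real $t$ uniformly in $\varepsilon$. This works because the half-disc is open and contains $t+i\varepsilon$ for all small $\varepsilon$, and because $|\zeta|^{-\theta}$ decreases in $|\zeta|$.
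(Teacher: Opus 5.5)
Your proposal is correct and follows essentially the same route as the paper. You get $\theta<1$ from Lemma~\ref{lem:theta-one-atom}, transfer the Puiseux bound \eqref{eq:local-G-bound} to vertical approaches $t+i\varepsilon$ near $x$ via $|t+i\varepsilon-x|\ge|t-x|$, dominate the Poisson-smoothed densities $-\frac1\pi\operatorname{Im}g(t+i\varepsilon)$ by the integrable majorant $C|t-x|^{-\theta}$, and conclude $\mu|_I\le C|t-x|^{-\theta}\,dt$ by testing against nonnegative $\varphi\in C_c(I)$; your explicit half-disc check simply spells out the paper's ``choose $I$ sufficiently small'' step.
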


\begin{proof}
By Lemma~\ref{lem:theta-one-atom}, non-atomicity implies
$
   \theta:=\theta(x)<1.
$
Choose $I\ni x$ sufficiently small that the estimate
\[
   \|G(t+i\varepsilon)\|
   \leq
   C
   \bigl(
      (t-x)^2+\varepsilon^2
   \bigr)^{-\theta/2}
\]
holds for $t\in I$ and all sufficiently small $\varepsilon>0$.

Define the Poisson regularization of the spectral measure by
\[
   \rho_\varepsilon(t)
   :=
   -\frac1\pi
   \operatorname{Im}g(t+i\varepsilon).
\]
Since $g$ is the scalar Cauchy transform of $\mu_{S_a}$,
\begin{equation}\label{eq:poisson-density}
   \rho_\varepsilon(t)
   =
   \int_{\mathbb R}
   P_\varepsilon(t-s)\,d\mu_{S_a}(s),
\end{equation}
where
\[
   P_\varepsilon(s)
   :=
   \frac1\pi
   \frac{\varepsilon}{s^2+\varepsilon^2}.
\]
Moreover,
\[
   0\leq
   \rho_\varepsilon(t)
   \leq
   \frac1\pi\|G(t+i\varepsilon)\|.
\]
Hence
\begin{equation}\label{eq:poisson-domination}
   0\leq
   \rho_\varepsilon(t)
   \leq
   C'
   \bigl(
      (t-x)^2+\varepsilon^2
   \bigr)^{-\theta/2}
   \leq
   C'|t-x|^{-\theta}
\end{equation}
for $t\neq x$.

Let $\varphi\in C_c(I)$ be nonnegative.  From
\eqref{eq:poisson-density} and Fubini's theorem,
\[
\begin{aligned}
   \int_I
   \varphi(t)\rho_\varepsilon(t)\,dt
   &=
   \int_{\mathbb R}
   (P_\varepsilon*\varphi)(s)\,
   d\mu_{S_a}(s).
\end{aligned}
\]
Since
$
   P_\varepsilon*\varphi
   \longrightarrow
   \varphi
$
uniformly as $\varepsilon\downarrow0$, we obtain
\[
   \int_I\varphi(t)\,d\mu_{S_a}(t)
   =
   \lim_{\varepsilon\downarrow0}
   \int_I
   \varphi(t)\rho_\varepsilon(t)\,dt.
\]
Using \eqref{eq:poisson-domination},
\[
   \int_I\varphi(t)\,d\mu_{S_a}(t)
   \leq
   C'
   \int_I
   \varphi(t)|t-x|^{-\theta}\,dt.
\]
Therefore, as measures on $I$,
\begin{equation}\label{eq:measure-domination}
   \mu_{S_a}|_I
   \leq
   C'|t-x|^{-\theta}\,dt.
\end{equation}
It follows that
\[
   \mu_{S_a}|_I=\rho(t)\,dt
\]
for some measurable $\rho\geq0$ satisfying
\eqref{eq:rho-pointwise-bound}.

If $\theta=0$, this immediately gives
$
   \rho\in L^\infty(I).
$
If $0<\theta<1$, then
\[
   |t-x|^{-p\theta}\in L^1_{\mathrm{loc}}
\]
whenever
\[
   p\theta<1.
\]
Thus
\[
   \rho\in L^p(I)
   \qquad
   \text{for every }
   1<p<\frac1\theta.
\]
Since $\theta<1$, such a $p>1$ always exists.
\end{proof}

\bibliographystyle{amsplain}
\bibliography{MS_determinant_references}

\providecommand{\bysame}{\leavevmode\hbox to3em{\hrulefill}\thinspace}
\providecommand{\MR}{\relax\ifhmode\unskip\space\fi MR }
\providecommand{\MRhref}[2]{%
  \href{http://www.ams.org/mathscinet-getitem?mr=#1}{#2}
}
\providecommand{\href}[2]{#2}
\begin{thebibliography}{10}

\bibitem{AjankiErdosKruger2019}
Oskari~H. Ajanki, L{\'a}szl{\'o} Erd{\H o}s, and Torben Kr{\"u}ger,
  \emph{Stability of the matrix {Dyson} equation and random matrices with
  correlations}, Probability Theory and Related Fields \textbf{173} (2019),
  no.~1-2, 293--373.

\bibitem{Brown1986}
Lawrence~G. Brown, \emph{Lidskii's theorem in the type {II} case}, Geometric
  Methods in Operator Algebras (Kyoto, 1983) (Huzihiro Araki and Edward~G.
  Effros, eds.), Pitman Research Notes in Mathematics Series, vol. 123, Longman
  Scientific \& Technical, Harlow, 1986, pp.~1--35.

\bibitem{EvansHoeghKrohn1978}
David~E. Evans and Raphael H{\o}egh-Krohn, \emph{Spectral properties of
  positive maps on {$C^*$}-algebras}, Journal of the London Mathematical
  Society \textbf{17} (1978), no.~2, 345--355.

\bibitem{FugledeKadison1952}
Bent Fuglede and Richard~V. Kadison, \emph{Determinant theory in finite
  factors}, Annals of Mathematics \textbf{55} (1952), no.~3, 520--530.

\bibitem{GargGurvitsOliveiraWigderson2020}
Ankit Garg, Leonid Gurvits, Rafael Oliveira, and Avi Wigderson, \emph{Operator
  scaling: theory and applications}, Foundations of Computational Mathematics
  \textbf{20} (2020), no.~2, 223--290.

\bibitem{Gurvits2004}
Leonid Gurvits, \emph{Classical complexity and quantum entanglement}, Journal
  of Computer and System Sciences \textbf{69} (2004), no.~3, 448--484.

\bibitem{HaagerupLarsen2000}
Uffe Haagerup and Flemming Larsen, \emph{Brown's spectral distribution measure
  for {$R$}-diagonal elements in finite von {N}eumann algebras}, J. Funct.
  Anal. \textbf{176} (2000), no.~2, 331--367.

\bibitem{HallHo2025}
Brian~C. Hall and Ching-Wei Ho, \emph{Spectral results for free random
  variables}, 2025, arXiv:2510.03382.

\bibitem{HansenPedersen1982}
Frank Hansen and Gert~Kj{\ae}rg{\aa}rd Pedersen, \emph{Jensen's inequality for
  operators and {L{\"o}wner}'s theorem}, Mathematische Annalen \textbf{258}
  (1982), no.~3, 229--241.

\bibitem{HeltonRashidiFarSpeicher2007}
J.~William Helton, Reza Rashidi~Far, and Roland Speicher, \emph{Operator-valued
  semicircular elements: solving a quadratic matrix equation with positivity
  constraints}, International Mathematics Research Notices \textbf{2007}
  (2007), no.~22, Art. ID rnm086, 15.

\bibitem{KarginNonSingular2026}
Vladislav Kargin, \emph{An algebraic characterization of non-singular matrix
  semicircles}, 2026, arXiv:2604.23089.

\bibitem{KarginSingularities2026}
\bysame, \emph{Singularities of matrix semicircles}, 2026, arXiv:2607.26222.

\bibitem{Kunisky2026}
Dmitriy Kunisky, \emph{Lehner's operator norm formulas, semidefinite
  programming, and spiked matrix models}, 2026, arXiv:2606.14687.

\bibitem{Lehner1999}
Franz Lehner, \emph{Computing norms of free operators with matrix
  coefficients}, American Journal of Mathematics \textbf{121} (1999), no.~3,
  453--486.

\bibitem{MaiSpeicher2024}
Tobias Mai and Roland Speicher, \emph{Fuglede--kadison determinants of
  matrix-valued semicircular elements and capacity estimates}, 2024,
  arXiv:2406.15922.

\bibitem{MaiSpeicherYin2023}
Tobias Mai, Roland Speicher, and Sheng Yin, \emph{The free field: realization
  via unbounded operators and {A}tiyah property}, J. Funct. Anal. \textbf{285}
  (2023), no.~5, 110016, arXiv:1905.08187.

\bibitem{Miyagawa2026}
Akihiro Miyagawa, \emph{The spectra of polynomials in free (semi)circular
  operators}, 2026, arXiv:2603.19528.

\bibitem{ShlyakhtenkoSkoufranis2015}
Dimitri Shlyakhtenko and Paul Skoufranis, \emph{Freely independent random
  variables with non-atomic distributions}, Transactions of the American
  Mathematical Society \textbf{367} (2015), no.~9, 6267--6291.

\bibitem{SpeicherBlog2026}
Roland Speicher, \emph{A story of the determinant of operator-valued
  semicircular elements}, Blog post, \emph{Free Probability Theory}, September
  2026,
  \url{https://rolandspeicher.com/2026/09/09/a-story-of-the-determinant-of-operator-valued-semicircular-elements/},
  posted 9 September 2026.

\bibitem{Walker1950}
Robert~J. Walker, \emph{Algebraic curves}, Princeton Mathematical Series,
  vol.~13, Princeton University Press, Princeton, NJ, 1950.

\bibitem{Yin2026}
Sheng Yin, \emph{A variational formula for affine semicircular determinants},
  Hexagon preprint hexagon:2610.00180v1, 2026,
  \url{https://hexagonmath.org/2610.00180v1}.

\end{thebibliography}

\end{document}